\documentclass[12pt]{amsart}

\usepackage{amsmath,amssymb}
\usepackage{hyperref}
\usepackage{datetime}
\usepackage{xcolor}
 \usepackage{float}
 
\newtheorem{thm}{Theorem}
\newtheorem{definition}{Definition}
\newtheorem{lem}[thm]{Lemma}
\newtheorem{prop}[thm]{Proposition}
\newtheorem{cor}[thm]{Corollary}

\newcommand{\Z}{{\mathbb Z}} 
 
\newcommand{\R}{{\mathbb R}}
\newcommand{\C}{{\mathbb C}} 
\newcommand{\corrRIk}{{\operatorname{Corr}(R,k,I)}}
\newcommand{\corrRk}{{\operatorname{Corr}(R,k)}} 
\newcommand{\mean}{{\mathrm{Mean}}}

\newcommand{\fixmehide}[1]{{}}

\usepackage{graphicx}
\graphicspath{ {./images/} }

\title[Area correlations]{Area
  correlations related to lattice points in discs}
\author{Matteo Bordignon}

\address{Dipartimento di Matematica “F. Enriques”, Università degli
  Studi di Milano, Via Saldini 50, 20133 Milano, Italy.} 
\email{matteo.bordignon@unimi.it}

\author{P\"ar Kurlberg}
\address{Department of Mathematics, KTH,
SE-100 44 Stockholm, Sweden}
\email{kurlberg@kth.se}

\date{\today}

\begin{document}
\begin{abstract}
  Motivated by the Lester-Wigman vanishing area correlation conjecture
  for lattice points near the boundary of circles of growing radius,
  we investigate the dynamics of circle flows on tori (which are
  related to the motion of a charged particle in a magnetic field on a
  torus.)  We show that an analogue of the vanishing correlation
  conjecture holds in this setting, i.e., we have ``mixing for the
  area observable'' despite the flow being essentially integrable.  We
  also determine the probability density function of the areas, in
  global as well as local regimes.
  
\end{abstract}
\date{}
\maketitle


\section{Introduction}
Given $R > 0$, let $$B(R) := \{ x \in \R^{2} : |x| \le R \}$$ denote
the radius $R$ ball with center at $(0,0)$, let the circle $C(R)$
denote the boundary of $B(R)$,
and let $\Lambda := \Z^{2}$ denote the standard lattice in $\R^{2}$.  A
standing assumption is that $R$ is chosen so that $C(R) \cap \Lambda =
\emptyset$; it suffices to avoid a countable subset of radii for this
to hold.
The
``half open'' unit square $S := [0,1) \times [0,1)$ is a fundamental
domain for the action of $\Lambda$ on $\R^2$. For $\lambda \in \Lambda$ define
$S_{\lambda} := S+\lambda$ and note the {\em disjoint} union
$\cup_{\lambda \in \Lambda} S_{\lambda} = \R^{2}$.

For $R > 0$ define
$$\Lambda(R) := \{ \lambda \in \Lambda : S_{\lambda} \cap C(R) \neq
\emptyset \}$$
and for $\lambda \in \Lambda(R)$, let $A(\lambda)$ denote the area of
$S_{\lambda} \cap B(R)$. The  main result of this paper is 
that the area correlation, for $k \in \Z$,
$$
\corrRk := \frac{1}{|\Lambda(R)|}
\sum_{\lambda_{j} \in \Lambda(R)}
(A(\lambda_{j})-1/2) \cdot (A(\lambda_{j+k}) -1/2)
$$
vanishes as $R \to \infty$ provided $|k| = O(R^{1-\epsilon})$ grows
with $R$, where $\{ \lambda_{1}, \lambda_{2}, \ldots \}$ denotes the
elements of $ \Lambda(R)$ ordered by increasing argument (after
identifying $\R^{2}$ with $\C$.)
We subtract by $1/2$ since the mean
area is $1/2 +o(1)$ as $R \to \infty$,
cf. Proposition~\ref{prop:mean-variance}. 
Further, here and in what follows we use the
convention of cyclic wraparound in case $j+k > |\Lambda(R)|$ or
$j + k < 1$ (in case $k<0$).

To prove that the (global) area correlation vanishes we work locally. Given an  interval 
$I = I(R) \subset [0,2 \pi]$, allowed to shrink as  $R$ grows, we
define 
$$
\Lambda(R,I) := \{ \lambda \in \Lambda(R) : \arg(\lambda) \in I \},
$$
and study the local area correlations
$$
\corrRIk :=
\frac{1}{|\Lambda(R,I)|}
\sum_{\lambda_{j} \in \Lambda(R,I)}
(A(\lambda_{j})-1/2) \cdot (A(\lambda_{j+k}) -1/2)
$$

In fact, when\footnote{By $k \asymp R^{1-\epsilon}$ we mean that
  $k \ll R^{1-\epsilon} \ll k$. } $k \asymp R^{1-\epsilon}$ we must
allow $|I|= o(1/k) = o(1/R^{1-\epsilon})$ for ``most'' such
short intervals $I$ --- a key issue is to avoid intervals for
which the slope of the tangent line of the circle is well approximated
by a rational number of low height.

\begin{thm}
  \label{thm:vanishing-local-correlations}
  Given $\epsilon > 0$, provided 
  $$k = k(R) = O(R^{1-\epsilon})$$
  and 
  $$|I| > (\log \log Q)/\log Q\,\, \mathrm{with}\,\,Q =
 \min(\log R, \sqrt{k}/\log k),$$
 we have
  $$\corrRIk = o(1),$$ 
  as $k,R \to \infty$.\fixmehide{remark that we can fix $k$ take $R \to
    \infty$ and then let $k$ grow? Existence of $C_{k}$ as in LW?}

\end{thm}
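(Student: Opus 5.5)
Locally, the boundary circle over a short arc is essentially a straight line of slope $\alpha$, the slope of the tangent at the angle $\theta\in I$. The lattice squares it crosses are then governed by a circle rotation, and the area $A(\lambda_j)$ is a piecewise-smooth function of the rotation's state. The plan is therefore to compare the local area sequence with the area sequence cut out by a straight line of slope $\alpha$. For such a line the squares met, ordered along the line, form a Sturmian-type sequence. The area of each square is an explicit continuous piecewise polynomial $F(x)$ of an offset $x\in[0,1)$, where $x$ evolves by the rotation $x\mapsto x+\alpha \bmod 1$, with a bounded number of square crossings per step.

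\textbf{Reduction.} By symmetry, restrict to arguments where the slope satisfies $|\alpha|\le 1$. Cover $I$ by subintervals $J$ of angular length about $R^{-1+\epsilon/2}$, each containing roughly $R^{\epsilon/2}\gg k$ boundary squares. On such a $J$ the circle deviates from its tangent line by $O(R\,|J|^2)=O(R^{-1+\epsilon})=o(1)$. Hence $A(\lambda_j)$ equals the line-model area up to $o(1)$, except for the squares where the deviation changes which squares are hit; these are the squares in which the line passes within $o(1)$ of a lattice corner. Corner-close events have density $o(1)$ by equidistribution of the offset $x$ once $\alpha$ is not near a low-height rational, so they contribute $o(1)$ to the average.

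\textbf{Line model.} Within $J$ the correlation becomes
$$\frac1N\sum_{n\le N}\bigl(F(x_n)-\tfrac12\bigr)\bigl(F(x_{n+k'})-\tfrac12\bigr),$$
where $k'$ is the number of rotation steps corresponding to $k$ squares. Since $k$ squares amount to about $k/(1+\alpha)$ columns, we have $k'\approx k/(1+\alpha)$, but $k'$ itself depends on the orbit. I would handle this by encoding the pair (offset, square-within-column) as a rotation on a suitable circle; this is the standard suspension of a Sturmian coding. The summand then becomes $G_k(x_n)$ with $G_k(x)=\bigl(F(x)-\tfrac12\bigr)\bigl(F(x+k\beta)-\tfrac12\bigr)$ for an effective rotation number $\beta$ depending on $\alpha$. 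Averaging over the orbit and applying Weyl/Erd\H{o}s--Tur\'an, with discrepancy controlled through continued fractions of $\beta$, gives
$$\int_0^1 G_k\,dx=\sum_{m\neq0}|\hat F(m)|^2 e(mk\beta).$$
The Fourier coefficients of the piecewise-polynomial $F$ decay like $|\hat F(m)|\ll m^{-2}$. Truncating at $|m|\le M$, the correlation is $o(1)$ as soon as $\|mk\beta\|$ is not small for $|m|\le M$. Here I am treating the mean as $\tfrac12$, which matches $\hat F(0)$ by Proposition~\ref{prop:mean-variance}.

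\textbf{Main obstacle.} The hard step is controlling the set of $\theta\in I$ for which $\beta(\theta)$, or $k\beta(\theta)$, is close to a rational of height at most $Q$. Such $\theta$ give a bad discrepancy at scale $R^{\epsilon/2}$ and a nonvanishing Fourier sum, since then $k\beta$ is nearly periodic. Because $\beta$ varies smoothly and monotonically with $\theta$ with derivative $\asymp 1$, I expect the measure of the bad set to be $\ll \sum_{q\le Q} q\cdot(\text{width})$. Choosing the widths so that the good-set discrepancy is $\ll 1/\log Q$ type, the bad set has measure $\ll(\log\log Q)/\log Q$. The hypothesis $|I|>(\log\log Q)/\log Q$ is what makes the bad subintervals a vanishing proportion of $I$. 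The two constraints in $Q$ enter as follows: $Q\le \log R$ is needed so that the window length dominates the denominators, and $Q\le\sqrt k/\log k$ is needed so that for $q\le Q$ and $|m|\le M$ the quantity $\|mk\beta\|$ is bounded below off the bad set. Summing the bounded contributions from the bad $J$ and the $o(1)$ contributions from the good $J$ gives $\corrRIk=o(1)$.
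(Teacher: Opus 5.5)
There are two genuine gaps. The first breaks the decorrelation mechanism you propose.

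\textbf{The decorrelation step is false.} In your fixed-slope model the window correlation is $\rho(k\beta)$, where $\rho(s)=\int_0^1(F(x)-\tfrac12)(F(x+s)-\tfrac12)\,dx=\sum_{m\neq0}|\hat F(m)|^2e(ms)$. This is a continuous, mean-zero function of $s$ with $\rho(0)>0$ (the variance). Hence $|\rho|\gg1$ on a set of $s$ of positive measure. Removing neighbourhoods of rationals of height $\le Q$, of total measure $o(1)$, cannot make it small. Already the $|m|=1$ terms give $2|\hat F(1)|^2\cos(2\pi k\beta)$, which is not small merely because $\|k\beta\|$ is bounded below. In fact, for a line of fixed slope the correlations do not decay at all, since $k\beta$ returns near $0$ for infinitely many $k$. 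So ``$o(1)$ on good windows, bounded on bad ones'' does not yield the theorem.

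What is missing is averaging over the slope. The phase $k\beta(\theta)$ moves at rate $\asymp k$ in $\theta$. So as $\theta$ runs over a range of length $\gg1/k$ (but $o(1)$, so that $F$ is essentially frozen), the phase equidistributes mod $1$. Then $\int_0^1\rho(s)\,ds=\bigl(\int_0^1(F-\tfrac12)\bigr)^2=0$. The vanishing of $\int_0^1(F-\tfrac12)$ for each fixed slope is Lemma~\ref{lem:1/2}, not Proposition~\ref{prop:mean-variance}, which concerns the global lattice average. This is exactly the paper's mechanism in Proposition~\ref{prop:main-technical}. Fix the displacement $\lambda=\lambda_{i+k}-\lambda_i$; the angles producing it form an interval $I(\lambda)$ of length $\asymp1/k$. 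On that interval $A(\lambda_i)$ is frozen, while the second area sweeps through $[0,1]$ and averages to $\tfrac12$.

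\textbf{The windows cannot contain the pair for most of the allowed range of $k$.} An arc of angular length $R^{-1+\epsilon/2}$ meets $\asymp R^{\epsilon/2}$ squares, which is not $\gg k$ when $k$ is as large as $R^{1-\epsilon}$. No window length helps once $k\gg\sqrt R$, because over $k$ squares the circle deviates from any line by $\asymp k^2/R$. So $\lambda_{j+k}$ is not determined from $\lambda_j$ by a rotation with constant rotation number.

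The paper never linearizes the arc joining the two squares. It works with the exact circles $C_{\theta,v}$ and uses order stability (Lemmas~\ref{lem:control-angles} and~\ref{lem:corner-case-negligible}) to show the displacement is constant on boxes of size $\asymp1/(kM)\times1/M$ in $(\theta,v)$. It linearizes only inside $S_\lambda$. Line approximations appear only over short windows near $\lambda_j$, to obtain $\mu$-equidistribution of $(\theta(\lambda_j),v(\lambda_j))$ at that scale (Proposition~\ref{prop:small-scale-mu-equidistribution}).

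Accordingly, the paper's Diophantine exclusion is on the tangent slope $\tan\theta$, via good Farey intervals, not on $k\beta$. The bad Farey intervals are negligible by Lemma~\ref{lem:bad-slope-negligble}, which is where $|I|>(\log\log Q)/\log Q$ is used. The condition $Q\le\sqrt k/\log k$ ensures $1/k=o(1/Q^2)$, so that each good Farey interval contains many intervals $I(\lambda)$.

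For $k=o(\sqrt R)$, your rotation picture with the averaging over windows added could be made to work. Beyond that range you need a curvature-aware displacement map $z\mapsto z+\Phi_k(\theta)$ with $\Phi_k'\asymp k$, which is essentially what the family $C_{\theta,v}$ provides.

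A minor point: $F$ jumps from $1$ to $0$ at the corner of $\partial_L(S)$, so $\hat F(m)\asymp1/m$ rather than $m^{-2}$.
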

There is a tradeoff between taking $k$ large and $I$ having small
size.  Our primary interest is taking $k$ essentially as large as
possible, but the methods allow us to take, say,
$|I| \gg (\log R)^{8}/\sqrt{R}$ at the expense of having to further
restrict the size of $k$.  We note that there is a natural barrier near
$|I| = R^{-1/2}$: for $|I| = o(R^{-1/2})$ the local mean area can
significantly deviate from $1/2$ (e.g. at the lower part of the
circle.)


For comparison,  we also prove
some simpler statistics related to the areas $A(\lambda_{j})$.
We define the (global) mean area as  
$$\mean(R) := \frac{1}{|\Lambda(R)|}
\sum_{\lambda_{j} \in \Lambda(R)}
A(\lambda_{j}),$$
and the (global) variance as
$$\mathrm{Var}(R) := \frac{1}{|\Lambda(R)|}
\sum_{\lambda_{j} \in \Lambda(R)}
\left(A(\lambda_{j})-\mean(R) \right)^2.$$
We prove the following asymptotics for the global mean and
variance. 
\begin{prop}
  \label{prop:mean-variance}
    We have 
    $$\mean(R) =\frac{1}{2}+o(1),$$
    and 
    $$\mathrm{Var}(R)=
c_{\mathrm{glob}}+o(1), \quad c_{\mathrm{glob}} :=
\frac{13}{60}-\frac{1}{12}\log(1+\sqrt2)-\frac{1}{30\sqrt2} =
0.119\ldots >0, $$ 
    as $R \to \infty$.
\end{prop}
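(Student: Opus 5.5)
Near a point of $C(R)$ with argument $\theta$, inside a unit square the circle is a straight line up to an error of size $O(1/R)$. So each area $A(\lambda)$ is, up to $O(1/R)$, the area cut out of the unit square by a line of slope determined by $\theta$ at some offset. The plan is to parametrize the squares in $\Lambda(R,I)$ by this local line data and show the offsets equidistribute, so that the mean and variance become explicit integrals over slope and offset. Using the eightfold symmetry of $\Z^2$, we only need to treat arguments $\theta\in(0,\pi/4)$ (more precisely a slightly smaller interval avoiding the axes). There the tangent line is the graph $y = \sqrt{R^2-x^2}$ with slope $-t$, $t=\tan\theta\in(0,1)$. Inside each column $x\in[m,m+1)$ the curve crosses one or two squares.

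\textbf{Step 1 (local model).} In column $m$, write $y_0=\sqrt{R^2-m^2}$ and $u=\{y_0\}$. The squares met in that column, and their areas, are given by $F_t(u)+O(1/R)$ for an explicit piecewise-polynomial function of $(t,u)$: the area under the line $y=u-tx$ for $x\in[0,1]$, cut between the integer levels. If $u\ge t$ the column contains one square, with area $u-t/2$. If $u<t$ it contains two squares, with areas $u^2/(2t)$ and $1-(t-u)^2/(2t)$. We also count the squares: on average $1+t$ squares per column, so $|\Lambda(R)|\sim 4R\int_0^{\pi/2}(|\cos\theta|+|\sin\theta|)\,d\theta\cdot$(the appropriate normalization). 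Crucially, this count is weighted by the Jacobian $dx = R\cos\theta\,d\theta$.

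\textbf{Step 2 (equidistribution).} We need that, for $m$ ranging over an interval of length $\gg R^{1-\delta}$ on which $t$ is essentially constant, the fractional parts $\{\sqrt{R^2-m^2}\}$ are uniformly distributed mod $1$. This is a van der Corput estimate. With $f(x)=\sqrt{R^2-x^2}$ we have $f''\asymp 1/R$ on the range $\theta\in[\eta,\pi/4]$. The second-derivative test bounds the exponential sums $\sum e(hf(m))$ by $\ll N R^{-1/2}+R^{1/2}$, which is $o(N)$ for $N = R^{1-\delta}$, $\delta<1/2$. Weyl's criterion then gives uniform distribution. The $O(1/R)$ curvature errors and the $O(\eta)$ angular sectors near the axes contribute $o(1)$. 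I expect this step, with uniformity in $t$ and handling the degeneration as $t\to 0$ where $f''$ stays $\asymp 1/R$ but the geometry changes, to be the main technical point. Even so it is standard, since only a fixed $\eta$ and then $\eta\to0$ are needed.

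\textbf{Step 3 (computation).} Summing over columns with weight $R\cos\theta\,d\theta$, the mean becomes
$$\frac{\int (\int_0^1 \text{sum of areas in column})\,dx}{\int (1+t)\,dx}.$$
The column sum of areas is $u-t/2+\ldots$; by the symmetry $A\mapsto 1-A$ (reflecting the square through its center swaps inside and outside), the mean is exactly $1/2+o(1)$. For the variance we compute
$$\int_0^1\Big[\mathbf 1_{u\ge t}(u-t/2)^2+\mathbf 1_{u<t}\Big(\tfrac{u^4}{4t^2}+\big(1-\tfrac{(t-u)^2}{2t}\big)^2\Big)\Big]du,$$
which is polynomial in $t$. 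We then integrate against $d\theta$ (with $dx=R\cos\theta\,d\theta$ and $t=\tan\theta$) over $(0,\pi/4)$, divide by the analogous integral of $1+t$, and subtract $1/4$. The terms $\log(1+\sqrt2)=\int_0^{\pi/4}\sec\theta\,d\theta$ and $1/\sqrt2$ arise from integrating powers of $\sin,\cos$ and $\sec$ over $[0,\pi/4]$, and should produce the stated $c_{\mathrm{glob}}$; this part is a routine calculation.
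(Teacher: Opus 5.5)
Your argument is correct, and it takes a genuinely different route from the paper.

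\textbf{The paper's route.} The paper never organizes squares by columns. It works with the entry data $(\theta(\lambda),v(\lambda))$ on the $L$-shaped boundary $\partial_L(S)$ and dissects the slopes by Farey fractions with $Q=\sqrt R/(\log R)^3$. It approximates short arcs first by tangent lines and then by lines of rational slope $p/q$. The exact periodicity of $\alpha+\tfrac pq n \bmod 1$ (Lemma~\ref{lem:linear-approx-equidistribution}) then gives $\mu$-equidistribution in angular windows of length $1/(\sqrt R\log R)$ (Proposition~\ref{prop:small-scale-mu-equidistribution1}). Before that, slopes near rationals of small height or small numerator are discarded by counting (Lemma~\ref{lem:bad-slope-negligble1}). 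The mean then follows from Lemma~\ref{lem:1/2}, and the variance from Lemma~\ref{lem:fixed-theta-variance} and Corollary~\ref{cor:global-variance}.

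\textbf{Your route.} Your column parametrization reduces everything to the one-dimensional sequence $\{\sqrt{R^2-m^2}\}$. The second-derivative test then replaces both the Farey machinery and the bad-slope bookkeeping. It is uniform on the whole octant because $|f''|\asymp 1/R$ there, and Erd\H{o}s--Tur\'an gives explicit uniformity over the $R^{\delta}$ blocks if you want it.

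\textbf{The two models agree.} Taking $u$ uniform on $[0,1)$ makes the intercept of each individual square uniform on an interval of length $1+t$. This is exactly the pushforward $\cos\theta\,db$ on $[-m,1]$ in Lemma~\ref{lem:fixed-theta-variance}. Your integrand gives $G(t)=\tfrac13+\tfrac t2-\tfrac{t^2}{12}+\tfrac{t^3}{60}$, hence $G(t)-\tfrac{1+t}{4}=\tfrac{t^3-5t^2+15t+5}{60}=(1+t)g(\theta)$. After $u=\tan\theta$ the remaining integral is $\tfrac1{60}\int_0^1(u^3-5u^2+15u+5)(1+u^2)^{-3/2}\,du=c_{\mathrm{glob}}$, so your ``routine calculation'' does close.

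\textbf{What each route buys.} Yours is shorter and yields a power saving at angular scales $R^{-1/2+\epsilon}$. For the mean, the identity in Appendix~\ref{appendixB} shows the statement is equivalent to $N(R)=\pi R^2+o(R)$, so you are in effect running van der Corput's classical circle-problem argument. The paper's route is elementary. More importantly, it builds the measure $\mu$ and the good/bad Farey dichotomy that it needs anyway for the correlation theorem. It also gives local means at scale $\log^2R/R$ near good Farey slopes, which the second-derivative test cannot reach because it needs blocks longer than $\sqrt R$.

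\textbf{One slip to fix.} At argument $\theta\in(0,\pi/4)$ the tangent slope is $-\cot\theta$, of modulus $>1$. So your one-or-two-squares-per-column picture, and your Jacobian $dx=R\cos\theta\,d\theta$, only make sense in the octant of arguments $(\pi/4,\pi/2)$, with $\theta$ read as the tangent angle, i.e.\ $x=R\sin\theta$.
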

In fact, we compute the asymptotic for the local mean and variance
where we allow $|I|$ to shrink in two regimes.
Namely, for  $|I|>1/R^{\frac{1}{2}-\epsilon}$
(and any small fixed $\epsilon>0$)
as
well as for much shorter intervals at ``generic'' position; here
generic is related to Diophantine properties of
$\tan(\arg(\lambda))$), see Section \ref{sec:localmean}.  While the
expectation is locally constant\footnote{We remark that for
  $|I| = o(\sqrt{R})$ the local mean is generically $1/2+o(1)$, but
  for non-generic placements of $I$ the local mean might deviate from
  $1/2$.} for $|I|>1/R^{\frac{1}{2}-\epsilon}$, the variance is not
and depends on $\arg(\lambda)$. \par
We also determine the cumulant and the probability density function of
the areas $A(\lambda)$ for $\lambda\in \Lambda(R)$. 
\begin{prop}\label{prop:area-distribution}
For \(t\in [0,1] \), define
\[
F_R(t):=
\frac{1}{|\Lambda(R)|}
\#\{\lambda\in \Lambda(R):\ A(\lambda)\le t\}.
\]
Then, as \(R\to\infty\),
\[
F_R(t)\longrightarrow \int_0^t f(a)\,da
\]
where the density function \(f\), for \(0<a<1\) and $z_a=\min(a,1-a)$, is
\begin{align*}
f(a)
=
\frac{2z_a}{\sqrt{1+4z_a^2}}
+
\frac{1}{\sqrt{2z_a}}\int_{2z_a}^{1}
\frac{\sqrt m}{\,(1+m^2)^{3/2}}\,dm.
\end{align*}
\end{prop}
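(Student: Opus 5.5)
We reduce the problem to a local model in which the arc of $C(R)$ inside a unit square is essentially a straight line segment, and then let the slope and the offset of that line equidistribute. Parametrize by the argument $\theta$. By the symmetries $(x,y)\mapsto(\pm x,\pm y),(y,x)$ of $\Lambda$ and $B(R)$ we may restrict to $\theta\in(0,\pi/4)$. There the outward normal is $(\cos\theta,\sin\theta)$, and inside a unit square the circle agrees with its tangent line up to an error $O(1/R)$ in area. So $A(\lambda)=g(m,u)+O(1/R)$, where $g(m,u)$ is the area of the part of $[0,1)^2$ lying below (inward of) the line of slope $-\cot\theta$ at offset $u$. Writing $m=\tan\theta\in(0,1)$, this line can be written as $x\,m+y=c$ after rotating coordinates, i.e.\ $\{(x,y): y\le c-mx\}$ up to the obvious reflection.

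The squares meeting $C(R)$ in the sector $\theta\in I$ are indexed by the columns $x=\lfloor R\cos\theta\rfloor$ (or rows, depending on orientation), together with at most one extra square per column when the line crosses a horizontal grid line within that column. Fix a column $n$. The relevant offset is the fractional part $\{\sqrt{R^2-n^2}\}$, and its local slope is $m=n/\sqrt{R^2-n^2}$. The first step is to show that, for intervals $I$ of fixed length (or of length at least $R^{-1/2+\epsilon}$), the pairs $(m_n,\{\sqrt{R^2-n^2}\})$ equidistribute in $J\times[0,1)$, with $m$ distributed according to the number of columns per $d\theta$ and the offset uniform. This is the usual van der Corput / exponential-sum input: the phase $h\sqrt{R^2-n^2}$ has second derivative of size $\asymp h/R$, giving $\sum_n e(h\sqrt{R^2-n^2})\ll R^{1/2}h^{1/2}$ over $R$ terms. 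Presumably the same equidistribution is what underlies Proposition~\ref{prop:mean-variance} earlier in the paper, and we reuse it. I expect this step to be the main technical obstacle: the counting measure on $\Lambda(R)$ weights each column by one plus the number of grid-line crossings, so the offset is not simply uniform on $\Lambda(R)$ itself. This needs care.

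To handle it, work per column. A column with offset $u$ and slope $m$ contains either one square, of area $u$-dependent, or two squares with areas $a_1,a_2$, which happens exactly when $u<m$. Explicitly, for a line $y=u-mx$ on $[0,1]$:
\begin{itemize}
\item if $u\ge m$, there is one square, of area $u-m/2$;
\item if $u<m$, there are two squares, with areas $u^2/(2m)$ (a triangle) and $1-(m-u)^2/(2m)$ (the complement of a triangle).
\end{itemize}
Hence
\[
F_R(t)\to \frac{\int_0^1 \rho(m)\int_0^1\big(\mathbf 1[u\ge m]\mathbf 1[u-m/2\le t]+\mathbf 1[u<m](\mathbf 1[u^2/2m\le t]+\mathbf 1[1-(m-u)^2/2m\le t])\big)\,du\,dm}{\int_0^1\rho(m)(1+m)\,dm}.
\]
Here $\rho(m)\,dm$ is the column density. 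Since columns are indexed by $x$, we have $dx=R\,d(\cos\theta)$, and since $m=\cot$ of the normal angle, $\rho(m)\propto(1+m^2)^{-3/2}$. The denominator $1+m$ accounts for the extra square in two-square columns.

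The final step is to differentiate in $t$ and use the symmetry $a\leftrightarrow 1-a$, which comes from complementing. This reduces everything to $a=z_a\le1/2$. Three contributions arise:
\begin{itemize}
\item The one-square areas $u-m/2$, with $u$ uniform on $[m,1]$, give density $\int_0^{\min(1,2a)}\rho$. Part of this can be rewritten via $\int\rho=m/\sqrt{1+m^2}$, which yields the term $2z_a/\sqrt{1+4z_a^2}$.
\item The triangle areas $u^2/(2m)\le t$ with $u<m$ require $t\le m/2$, i.e.\ $m\ge 2a$. The density of $a$ given $m$ is $\sqrt{m/(2a)}$, which yields $\frac{1}{\sqrt{2a}}\int_{2a}^1\sqrt m\,\rho(m)\,dm$.
\item The complementary triangles contribute the mirror image of the previous term.
\end{itemize}
Normalizing by $\int\rho(1+m)$ (fixing the constant in $\rho$) and summing the contributions should reproduce the stated $f$. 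As a check, $\int_0^1 f=1$ and the mean is $1/2$, consistent with Proposition~\ref{prop:mean-variance}.
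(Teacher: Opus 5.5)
Your proposal is correct. Its last step, the conditional law of the area for fixed slope $m$ followed by averaging over $m$, is the same computation the paper does. The difference is in the parametrization and, above all, in how you obtain equidistribution.

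\textbf{How the paper does it.} The paper works square by square. It invokes Lemma~\ref{lem:local-average-variance}, i.e.\ $\mu$-equidistribution of the entry data $(\theta(\lambda),v(\lambda))$ on $\partial_L(S)$. That lemma comes from the Farey dissection of the tangent slopes: the arc is approximated by its tangent line, and then by a line of slope $p/q\in\widehat F_{Q,\mathrm{Good}}$ (Lemma~\ref{lem:bad-slope-negligble1}, Proposition~\ref{prop:small-scale-mu-equidistribution1}). For fixed $m$ the paper then takes a single intercept $b$ uniform on $[-m,1]$ and averages against $\frac{1+m}{(1+m^2)^{3/2}}\,dm$.

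\textbf{How you do it.} You work strip by strip in the flat direction, with an offset $u\in[0,1)$ and a second square exactly when $u<m$. This is the same model in other coordinates. Your second square is the paper's $b=u-1\in[-m,0)$, i.e.\ entry through the bottom side, and $\rho(m)(1+m)$ is the paper's square density. Your version has the advantage of making the weighting of the counting measure explicit.

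\textbf{What each equidistribution input buys.} Your input is van der Corput's second-derivative bound $L\sqrt{h/R}+\sqrt{R/h}$ for $\sum e(h\sqrt{R^2-n^2})$ over blocks of $L$ consecutive $n$. Combined with Weyl's criterion and the slow variation of $m_n$, this is classical, self-contained, and quantitative via Erd\H{o}s--Tur\'an. It works once $L/\sqrt R\to\infty$, i.e.\ on angular intervals somewhat longer than $R^{-1/2}$, which matches the barrier the paper mentions. That is all this proposition (and the mean and variance) needs. It cannot, however, reach the much shorter angular intervals of length $\asymp 1/k$ used in Theorem~\ref{thm:vanishing-local-correlations}. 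The paper's Farey machinery is built for those scales, at the cost of heavier bookkeeping, and the paper simply reuses it here.

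\textbf{Small clean-ups.}
\begin{enumerate}
\item For $\theta\in(0,\pi/4)$ the strips must be rows $y=n$; a column in $x$ contains many squares there. Also, ``$m=\cot$ of the normal angle'' clashes with your $m=\tan\theta$. Neither affects $\rho(m)=(1+m^2)^{-3/2}$, which is correct and gives $\int_0^1\rho(m)(1+m)\,dm=1$.
\item Since $\mathbf 1[A\le t]$ is discontinuous, you should note that the relevant boundary sets ($u-m/2=t$, $u=m$, $u^2=2mt$, \dots) are null for $du\,\rho(m)\,dm$.
\item The $O(1/R)$ misclassification of one versus two squares near $u=m$ affects only $O(\delta R)+o(R)$ strips for every fixed $\delta>0$, so it is negligible.
\end{enumerate}
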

We note that $f(1-a)=f(a)$, and that the above integral can be written
as an
elliptic integral of the second
kind. 
Interestingly, the density function has  two (integrable) singularities
at $a=0,1$, cf. Figure \ref{fig:dens}.
\begin{figure}[H]
    \centering
     \includegraphics[scale=0.7]{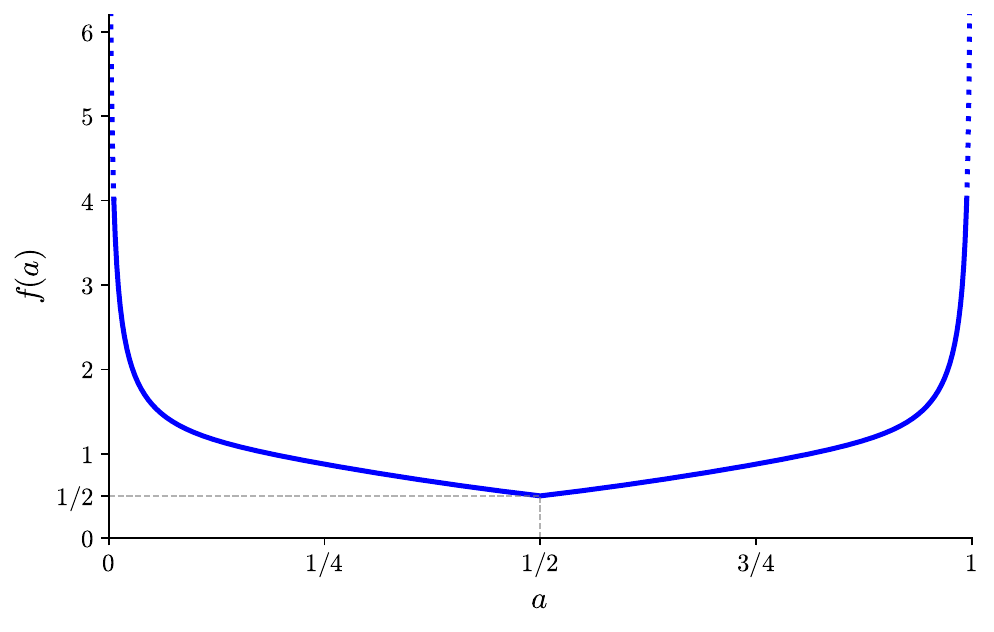}
    \caption{Density function $f(a)$}
    \label{fig:dens}
\end{figure}
The singularity near $a=0$ is asymptotically given by
$$f(a)\sim
\frac{\Gamma^2\left(\frac{3}{4}\right)}{2\sqrt{2\pi}}\frac{1}{\sqrt{a}},$$ 
and similarly for $1-a$ small.
\subsection{Discussion}
\label{sec:discussion}

Our investigations were inspired by work of Lester and Wigman who in \cite{LW},
motivated by a probabilistic folklore heuristic for Hardy's
conjectured $O(R^{1/2+\epsilon})$ error term for the Gauss circle
problem (cf. \cite{hardy-average-order}), initiated the study of area
correlations of the intersections of $B(R)$ with unit square
translates of lattice points inside thin annuli.
More precisely, for
$R>0$ large, they considered a different set of lattice points
$\tilde{\Lambda}(R) := \{ \lambda \in \Z^{2} : ||\lambda|-R| <
1/\sqrt{2} \}$, ordered by argument, with associated
areas\footnote{One notable difference from our setup is that
  $\tilde{A}_{\lambda_{i}}=1$ or $\tilde{A}_{\lambda_{i}}=0$ occurs
  for a positive proportion of elements in $\tilde{\Lambda}(R)$; this
  is due to including translates of squares either lying entirely
  inside or outside of $B(R)$. Interestingly this introduces some
  additional randomization compared to our setting (the
  ``extra''/``empty'' squares has a reshuffling effect on the ordering
  of the squares), though it appears difficult to exploit.}
$\tilde{A}_{\lambda_{i}} := |B(R) \cap ( [-1/2,1/2]^{2} +
\lambda_{i})|$ and showed that the area correlation
$$
\widetilde{C_{k}} =
\lim_{R\to \infty}
\frac{1}{|\tilde{\Lambda}(R)|}
\sum_{i=1}^{|\tilde{\Lambda}(R)|}
( \tilde{A}_{\lambda_{i}} \cdot   \tilde{A}_{\lambda_{i+k}} 
-1/4  )
$$
exists for each $k$. They also showed vanishing on average in that
$\lim_{K \to  \infty } \frac{1}{K}\sum_{k=1}^{K}
\widetilde{C}_{k} = 0$, and also conjectured that $\lim_{k\to \infty}
\widetilde{C}_{k} = 0$.

Our approach is in the spirit of dynamical systems, and can be viewed
as a study of a ``circle flow'' on the torus, which in turn can be
given a physical interpretation as the motion of an electron on a
torus with a magnetic field; $R \to \infty$ corresponds to a vanishing
magnetic field. Now, if the flow (or rather, the Poincare map with
respect to the Poincare section given by the sides of the square) was
mixing, vanishing area correlations would immediately
follow. However, as the circle flow can be well approximated by a
linear flow for  long times, mixing certainly does not hold, the
angle of intersection with vertical and horizontal lines remain
essentially constant.

Rather, we exploit a certain ``order stability'': a line segment, or
circular arc, connecting two squares and passing through $k-1$
intermediate squares can be perturbed while preserving the
ordering. This, together with an equidistribution result pertaining to
very small angular variation of tangent lines allows us to decompose
the area correlation sum into parts on which the areas approximately
decorrelate.  More precisely, the area of the intersection
of the first square is essentially constant, while the second area
intersections (roughly) uniformly ``sweeps through'' all areas between
zero and one as the angle varies (in very small intervals.)
Our setup turns out to somewhat be in the spirit of Gauss' original
argument for the circle problem --- he divides the squares
intersecting the ball into 
internal squares and boundary squares, and estimates the number of
boundary squares in terms of the length of the boundary,
c.f. Appendix~\ref{appendixA} for details.

We finally remark that our global mean area estimate $1/2+o(1)$
implies a slight improvement on Gauss $O(R)$ error term, due to a
curious ``perfect cancellation'' property between the boundary terms
of quarter circles, cf. Appendix~\ref{appendixB} for more details. We
also show that error $O(R^{1/2+\epsilon})$, for any $\epsilon>0$,
would follow from a certain IID hypothesis on the distribution of
$\{ A(\lambda)\}_{\lambda \in \Lambda(R)}$.\fixmehide{expand? maybe
  outline the argument?}
\subsection{Acknowledgments}
We would like to thank Steve Lester and Igor Wigman for many helpful
discussions and comments. \par

Parts of the present work were carried out while M.B. was at KTH being partially supported by
the Swedish Research Council (2020-04036) and others while was at the Department of Mathematics, Università di Milano,
as the recipient of a postdoctoral fellowship funded by the ``MUR - Department of Excellence 2023-2027, CUP code G43C22004580005 - project code DECC23\_012\_DIP''.\par
P.K. was partially supported by
the
Swedish Research Council (2020-04036, 2024-04820)

\section{Preliminaries}
\label{sec:prelimiaries}

\subsection{Lattice points near curves}
\label{sec:where-to}

Given a curve $\gamma \subset \R^2$, we say that a lattice point
$\lambda \in \Lambda$ is near $\gamma$ if
$\gamma \cap S_{\lambda} \neq \emptyset$. We begin with some simple
observations regarding the number of lattice points near curves, in
particular near circular arcs.

\begin{lem}
\label{lem:points-near-curve}  
  Let $\gamma$ be a continuous curve having $v_{1},v_{2} \in \R^{2}$
  as endpoints. The number of $\lambda \in \Lambda$ such that $\gamma
  \cap S_{\lambda} \neq \emptyset$ is $\gg |v_{1}-v_{2}|$.

  Further, if $\gamma(t) = (x(t),y(t))$ and both coordinate
  functions $x(t)$ and $y(t)$ are monotone, the number of 
  $\lambda \in \Lambda$ such that $\gamma
  \cap S_{\lambda} \neq \emptyset$ equals $\lVert (\Delta x, \Delta y)
  \rVert_{1} +   O(1)$, where $(\Delta x,\Delta y) = v_{1}-v_{2}$ and
  $\lVert (\Delta x, \Delta y)
  \rVert_{1} = |\Delta x| + |\Delta y|$ denotes the $L^{1}$ norm on
  $\R^2$.
\end{lem}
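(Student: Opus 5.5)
For the first claim I will use the integer-part map. Write $\lfloor v \rfloor := (\lfloor x \rfloor, \lfloor y \rfloor)$ for $v=(x,y)$. Since the $S_{\lambda}$ tile $\R^2$ disjointly, a point $\gamma(t)$ lies in $S_{\lambda}$ exactly when $\lambda = \lfloor \gamma(t) \rfloor$. So the set of $\lambda$ near $\gamma$ is the image $\{\lfloor \gamma(t)\rfloor\}$, and the plan is to bound the size of this image from below. The coordinates $x(t)$ and $y(t)$ are continuous. By the intermediate value theorem, $x(t)$ takes every value between $x(v_1)$ and $x(v_2)$, so $\lfloor x(t) \rfloor$ takes every integer in $[\lfloor x_1\rfloor, \lfloor x_2\rfloor]$. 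That gives at least $|\Delta x| - 1$ distinct values, each coming from a different $\lambda$. The same argument applies to $y$, so the number of near lattice points is at least $\max(|\Delta x|,|\Delta y|) - 1$. This exceeds $|v_1-v_2|/\sqrt2 - 1$. Since there is always at least one near point, the count is $\gg |v_1-v_2|$ (the $-1$ is absorbed, because $|v_1-v_2|\le 2$ gives a count of at least $1$, and otherwise the bound is linear).

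For the monotone case, assume after reflecting that both $x$ and $y$ are nondecreasing. Then $t \mapsto \lfloor \gamma(t) \rfloor$ is monotone in each coordinate, and it changes only when $x(t)$ or $y(t)$ crosses an integer. I will order the times at which $\gamma$ passes through a vertical line $x=m$ or a horizontal line $y=n$. The first coordinate $\lfloor x(t)\rfloor$ increases by exactly $\lfloor x_2\rfloor - \lfloor x_1 \rfloor = |\Delta x| + O(1)$ in total, and similarly for $y$. Monotonicity means the lattice path visits each square consecutively and never returns to one, so the squares visited are distinct. The number of distinct squares is then $1 +$ (number of jumps). Each crossing of an integer line produces one jump.

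The main obstacle is making the jump count precise in two degenerate situations. First, the curve may run along a grid line for a while, since monotone does not mean strictly monotone. Second, a vertical and a horizontal crossing may happen at the same time, i.e. the curve passes through a lattice point, and then the map jumps diagonally. In the diagonal case one square is skipped, but the number of squares visited is still bounded by $\#\{\text{vertical crossings}\} + \#\{\text{horizontal crossings}\} + 1$.

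To get matching bounds I will handle the two directions separately. For the upper bound, the total number of squares visited is at most $1 + (\text{increase of } \lfloor x\rfloor) + (\text{increase of } \lfloor y\rfloor)$, because each jump increases at least one floor coordinate. For the lower bound, I need to control diagonal jumps, and here the bound must hold up to $O(1)$. This is a potential issue, since a monotone curve such as the diagonal line passes through many lattice points. The lemma must intend that these corner passages count both adjacent squares, which follows from the half-open convention, or else be read with "$\gamma\cap \overline{S_\lambda}$". With closed squares, a passage through a corner touches all the squares around it, which restores the count $|\Delta x|+|\Delta y|+O(1)$. I will adopt this reading, or note the genericity assumption that the curve avoids lattice points, as holds for $C(R)$ under the standing assumption. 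Under that assumption no diagonal jumps occur, and the count is exactly $1 + |\Delta x| + |\Delta y| + O(1)$.
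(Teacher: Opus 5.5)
Your first part is the paper's argument: the intermediate value theorem on the integer vertical (or horizontal) lines crossed gives at least $\max(|\Delta x|,|\Delta y|)-1$ distinct $\lambda$. For the monotone case the paper only says ``similar''. Your floor-map argument is the right way to fill that in. The image of $t\mapsto\lfloor\gamma(t)\rfloor$ is a chain in $\Z^2$ whose steps raise $\lfloor x\rfloor+\lfloor y\rfloor$ by $1$ or $2$. So the upper bound $1+(\lfloor x_2\rfloor-\lfloor x_1\rfloor)+(\lfloor y_2\rfloor-\lfloor y_1\rfloor)$ holds unconditionally. The lower bound loses one for every diagonal step, i.e.\ for every lattice point at which $\lfloor x\rfloor$ and $\lfloor y\rfloor$ jump simultaneously. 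You are right that the statement as written therefore needs an extra hypothesis, which the paper's proof does not address: $\gamma(t)=(t,t)$, $0\le t\le N$, meets only the $N+1$ squares $S_{(j,j)}$, whereas $|\Delta x|+|\Delta y|=2N$.

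The concrete error is your first proposed remedy. The half-open convention is exactly what causes the undercount: at $(j,j)$ the curve passes from $S_{(j-1,j-1)}$ straight into $S_{(j,j)}$, touching neither $S_{(j-1,j)}$ nor $S_{(j,j-1)}$. So it cannot make corner passages ``count both adjacent squares''. Reading the lemma with closed squares fails too, because it overcounts. The same diagonal meets $\overline{S_{(j,j)}}$ for $-1\le j\le N$, and $\overline{S_{(j,j-1)}}$ and $\overline{S_{(j-1,j)}}$ for $0\le j\le N$. That is $3N+4=|\Delta x|+|\Delta y|+N+4$ squares, since each transversal lattice-point passage now adds a square instead of losing one.

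The only valid fix is the hypothesis you mention second: $\gamma$ passes through $O(1)$ lattice points. Then your chain count gives $|\Delta x|+|\Delta y|+O(1)$. Your preliminary reflection, which is not a symmetry of the half-open tiling, also becomes harmless under this hypothesis. This is all the paper needs, since the second assertion is only used for arcs of $C(R)$: Corollary~\ref{lattice-pts-in-small-arcs}, and the claim that an arc in a quadrant traverses $d(\lambda_1,\lambda_2)$ squares. For those arcs $C(R)\cap\Lambda=\emptyset$ is a standing assumption. Commit to that hypothesis and drop the closed-square reading.
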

\begin{proof}
Since  
  $\max( |\Delta x|,|\Delta y|) \gg |v_{1}-v_{2}|$; say
  $|\Delta x| \ge |\Delta y|$ and thus $|\Delta x| \gg
  |v_{1}-v_{2}|$.  By the intermediate value theorem
  $\gamma$ must intersect at least $|\Delta x|-1$ distinct vertical lines
  with integer $x$-coordinates, and these  intersections yield a
  distinct set of  $\lambda$ such that $\gamma \cap S_{\lambda} \neq
  \emptyset$. The argument for $|\Delta x| \le |\Delta y|$ is similar.
  The argument for the second assertion is similar.
\end{proof}

\begin{cor}
\label{lattice-pts-in-small-arcs}  
  If $k = o(R)$, both $\lambda_{j},\lambda_{j+k}$ must be contained in
  a circular sector having arc measure $O(k/R) = o(1)$; in particular,
 $\arg(\lambda_{j+k})-\arg(\lambda_{j}) = o(1)$.

  As for the
  number of lattice points in quadrants and octants we have
  $|\Lambda(R, (3 \pi /2, 2 \pi))| = 2R+O(1)$, and
  $|\Lambda(R, (3 \pi /2, 7 \pi/4))| = R+O(1)$.

  Further, for angular intervals
  $I = [\theta_{1}, \theta_{2}) \subset (3 \pi /2, 2 \pi)$ shrinking
  with $R$ (i.e. $|I| = o(1)$ as $R \to \infty$) we have
  $$
  |\Lambda(R, I)|
  = R \cdot |I| \cdot ( |\cos \theta_{1}| + |\sin \theta_{1}|)
  \cdot (1+o(1)),
  $$
  provided $|I| \cdot R \to \infty$.
\end{cor}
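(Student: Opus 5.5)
We deduce all three assertions from Lemma~\ref{lem:points-near-curve}, applied to suitable arcs of $C(R)$, together with the observation that on any arc contained in an open quadrant both coordinate functions are monotone.

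\textbf{Arc measure of $\lambda_j,\dots,\lambda_{j+k}$.} Suppose the arc of $C(R)$ between $\arg(\lambda_j)$ and $\arg(\lambda_{j+k})$ has angular measure $\phi$. Every square $S_\lambda$ meeting this arc has $\arg(\lambda)$ within $O(1/R)$ of the arc. Each $S_{\lambda}$ has diameter $\sqrt2$, so the ordering by $\arg(\lambda)$ differs from the order in which the arc visits the squares only by a bounded angular fuzz of size $O(1/R)$. By the first part of Lemma~\ref{lem:points-near-curve}, the arc meets $\gg R\phi$ squares, since its endpoints are at Euclidean distance $\asymp R\phi$ when $\phi\le\pi$. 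Apart from $O(1)$ exceptions near the ends, all of these squares lie strictly between $\lambda_j$ and $\lambda_{j+k}$ in the ordering. Hence $R\phi \ll k+O(1)$, so $\phi = O(k/R)=o(1)$. For the wraparound convention, note that $k=o(R)$ while $|\Lambda(R)|\asymp R$, and then apply the same argument to the complementary direction.

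\textbf{Quadrant and octant counts.} On the arc $\theta\in(3\pi/2,2\pi)$ we have $x=R\cos\theta$ increasing from $0$ to $R$ and $y=R\sin\theta$ increasing from $-R$ to $0$, so both coordinates are monotone. The second part of Lemma~\ref{lem:points-near-curve} then gives $R+R+O(1)=2R+O(1)$ squares meeting the arc. Two points need checking.
\begin{itemize}
\item A lattice point $\lambda$ with $\arg(\lambda)$ in the quadrant may have $S_\lambda$ meeting $C(R)$ only outside that angular range, and conversely. This affects only $O(1)$ squares, namely those near the axes at distance about $R$, because $S_\lambda$ has diameter $\sqrt2$ and $\lambda$ is its lower-left corner.
\item At the endpoints the arc passes through squares whose corners are not exactly at the arc ends, which again costs only $O(1)$.
\end{itemize}
For the octant $(3\pi/2,7\pi/4)$ the endpoints are $(0,-R)$ and $(R/\sqrt2,-R/\sqrt2)$, so $\|\Delta\|_1 = R/\sqrt2 + R(1-1/\sqrt2)=R$. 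This gives $R+O(1)$.

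\textbf{Shrinking intervals.} For $I=[\theta_1,\theta_2)$, apply the monotone case of the lemma to the arc from $\theta_1$ to $\theta_2$. This gives
$$|\Lambda(R,I)| = R\,|\cos\theta_2-\cos\theta_1| + R\,|\sin\theta_2-\sin\theta_1| + O(1).$$
By the mean value theorem, since $|I|=o(1)$ and the derivatives are continuous,
$$|\cos\theta_2-\cos\theta_1| = |I|\,(|\sin\theta_1|+o(1)),$$
and similarly for the sine term. Hence the count equals $R|I|(|\cos\theta_1|+|\sin\theta_1|) + o(R|I|)+O(1)$.

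The main subtlety is that near $\theta_1\approx 3\pi/2$ or $2\pi$ one of $|\sin\theta_1|,|\cos\theta_1|$ vanishes, so the error $o(R|I|)$ must be relative. This is harmless because $|\cos\theta_1|+|\sin\theta_1|\ge 1$, so $o(R|I|)$ is indeed $o$ of the main term. The hypothesis $R|I|\to\infty$ is then exactly what makes the $O(1)$ boundary error from the lemma, together with the $O(1)$ mismatch between the angular conditions on squares and on lattice points, negligible. This yields the factor $(1+o(1))$.
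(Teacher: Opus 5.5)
Your proof is correct and follows the paper's route. Both arguments apply Lemma~\ref{lem:points-near-curve} to arcs of $C(R)$ in the fourth quadrant, where the coordinates are monotone, and get the third assertion by computing $|\Delta x|+|\Delta y|$ for the arc over $I$. You use the first part of the lemma for the arc-measure bound where the paper cites the second, which is immaterial.

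Your version is slightly more careful than the paper's. The paper writes $\Delta x = R|I|\cos\theta\,(1+o(1))$ and $\Delta y = R|I|\sin\theta\,(1+o(1))$; these have sine and cosine interchanged and fail as relative estimates near the axes. Your observation that $|\cos\theta_1|+|\sin\theta_1|\ge 1$ fixes this, and you also account for the $O(1)$ mismatch between squares meeting the arc and lattice points with argument in $I$.
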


\begin{proof}
  The two first assertions are immediate consequences of the second
  assertion of Lemma~\ref{lem:points-near-curve}.  For the third
  assertion, we note that both the $x$ and $y$ coordinates are
  monotone since the curve segment is contained in the fourth
  quadrant. Since $\Delta x = R \cdot |I| \cos( \theta) \cdot
  (1+o(1))$, and 
  similarly $\Delta y = R \cdot |I| \sin( \theta) \cdot (1+o(1))$ the
  third assertion follows.
\end{proof}

Given lattice points $\lambda_{1},\lambda_{2} \in \Lambda$ we define
the distance between them as
$$
d(\lambda_{1},\lambda_{2)})
:=
\lVert \lambda_{1}-\lambda_{2} \rVert_{1}.
$$
For a circular arc $C$ (of large radius and centered at
the origin) contained in one of the quadrants, starting at some point
in $S_{\lambda_{1}}$ and ending at some point in $S_{\lambda_{2}}$,
the arc traverses exactly $d(\lambda_{1},\lambda_{2})$ distinct
squares $S_{\lambda}$. In particular, if $\lambda_{j},\lambda_{j+k}$
(ordered by argument as described above) lie in the same quadrant, we
have 
$d(\lambda_{j},\lambda_{j+k})=k$; conversely if
$\lambda_{j},\lambda \in \Lambda(R)$ lie in the same quadrant,
with $\arg(\lambda)>\arg(\lambda_{j})$, and
$d(\lambda,\lambda_{j}) =k$, then $\lambda = \lambda_{j+k}$.

\subsection{Reducing to lattice points in the seventh octant and $k>0$}
\label{sec:lattice-points-sixth}

We begin by reducing to the case where angular interval $I$ lies in
the seventh octant, so that the slope of the tangent lines lie in
$(0,1)$; this will be convenient later when we subdivide the circle
into pieces where the tangent slope lies in intervals given by the
Farey dissection of $(0,1)$.

Given an angular interval $I \subset (0,2\pi)$, recall that
$\Lambda(R,I) := \{ \lambda \in \Lambda(R): \arg(\lambda) \in I \} $
and the corresponding local area correlation is given by
$ \corrRIk = \frac{1}{|\Lambda(R,I)|} \sum_{\lambda_{j} \in
  \Lambda(R,I)} (A(\lambda_{j})-1/2) (A(\lambda_{j+k})-1/2) $.
After suitably
subdividing $I$, we may assume that
$I \subset ( \ell \pi/4, (\ell+1) \pi/4)$ for some integer $\ell$, so
that the corresponding lattice points $\{\lambda_{j} \}$ lie in one of the
eight octants (in case $\Lambda(R)$ has points on the coordinate axes
their contribution is negligible since there are at most four such.)
Further, since $\Z^{2}$ is invariant under
rotation by $\pi/2$, and the reflection in the real axis, we may assume
(possibly after changing the sign of $k$) that $I$ lies in the seventh
octant $(3\pi/2, 7\pi/4)$, and consequently that the slope of the
tangent lines of $C(R)$, in said octant, lies in $(0,1)$.

By Corollary~\ref{lattice-pts-in-small-arcs}, both
$\lambda_{j}, \lambda_{j+k}$ are contained in some arc of size
$\ll k$.  Consequently, if $k<0$, there are $O(k)$ lattice points for
which $\lambda_{j+k}$ lies in the sixth octant and $\lambda_{j}$ lies
in the seventh octant, and these give a negligible contribution to the
local correlation sum as long as $I$ is not too small, in particular
if $k/R = o( |I|)$.

Now, if $k<0$ and $\lambda_{j}, \lambda_{j+k}$ both lie in an interval
contained in the seventh octant, we have 
$$\sum_{\lambda_{j} \in
  \Lambda(R,I)} (A(\lambda_{j})-1/2) (A(\lambda_{j+k})-1/2)
$$
$$
=
\sum_{\lambda_{j} \in
  \Lambda(R,I')} (A(\lambda_{j})-1/2) (A(\lambda_{j-k})-1/2)
$$
where $I'=I + O(k/R)$ is some other interval contained in the seventh
octant.  Provided $k/R = o(|I|)$ this allows us to assume that $k>0$
without changing the asymptotics of the local correlation sum.

\section{A probabilistic model for a family of circles}
\label{sec:continuum-model}

We introduce a probabilistic model for the intersections of $C(R)$
with $S_{\lambda}$, for $\lambda$ in the seventh octant of the
circle.
We will view $C(R)$ as oriented counter clockwise, and we are
interested in the distribution of the points where $C(R)$ first enters
each $S_{\lambda}$. As we  restrict $\lambda$ to the seventh
octant, all such points of first intersection (for $R$ sufficiently
large) will be in the lower, or left part of the boundary of
$S_{\lambda}$, and we let
$$
\partial_{L}(S) := \{ (x,0) : x \in [0,1) \} \cup \{ (0,y) : y \in
[0,1) \} \subset S$$ denote the $L$-shaped lower, or left, part of the
boundary of $S$. Similarly we let $\partial_{L}(S_\lambda) $ denote
the lower/left part of the boundary of $S_{\lambda}$; 
we note that $\cup_{\lambda \in \Lambda}\partial_{L}(S_\lambda) $ is a
disjoint union of
$\{(x,y) \in \R^{2} : x \in \Z \text{ or } y \in \Z\}$, the set of
horizontal and vertical lines with  one integer coordinate.

After subtracting by $\lambda$, the intersection
$C(R) \cap S_{\lambda}$ is translated to lie in $S$, and said
translate of $C(R)$ is a circle passing through $\partial_{L}(S)$ at a
point $v(\lambda)$, having a tangent line passing through $v(\lambda)$
with slope $\tan(\theta(\lambda))$, for some angle
$\theta(\lambda) \in (0,\pi/4)$.
Formally, we define $v : \Lambda(R,(3\pi/2, 7\pi/4)) \to
\partial_{L}(S)$ by
$$
v(\lambda) := (C(R) \cap \partial_{L}(S_{\lambda})) - \lambda
$$
(note that the translation of $\partial_{L}(S_{\lambda})$ by
$-\lambda$ equals $\partial_{L}(S)$, and that
$C(R) \cap \partial_{L}(S_{\lambda})$ consists of a single point for
$R$ sufficiently large since $\lambda$ lies in the $7$th octant.) We
further define $\theta : \Lambda(R, (3\pi/2, 7\pi/4)) \to (0,\pi/4)$
as the angle between the tangent line of $C(R)$, at the point of
intersection $C(R) \cap \partial_{L}(S_{\lambda})$, and the horizontal
coordinate axis.

We will next define a probability measure on the said family of
circles passing through the points of $\partial_{L}(S)$; later we will
show that this measure is a good model for the joint distribution of
$(\theta(\lambda),v(\lambda))$ for
$\lambda \in \Lambda(R, (3\pi/2, 7\pi/4))$.

For $\theta \in (0,\pi/4)$ define a  measure $\mu_{\theta}$ on
$\partial_{L}(S)$ by 
$$
\mu_{\theta}(J) =
\begin{cases}
  |J| \cdot
\sin \theta  & \text{if $J \subset
                                                   \{ (x,0) : x  \in
                                                   [0,1)$\}},
  \\
  |J| \cdot
\cos \theta  &  \text{if $J
                                                   \subset \{ (0,y),
                                                   y  \in [0,1)\}$},
\end{cases}
$$
where $J$ denotes an interval, of length $|J|$, contained in either the
vertical or horizontal part of $\partial_{L}(S)$. 
Given a point $v \in \partial_L S$ and an angle $\theta \in (0,\pi/4)$
there is a unique line $L_{v,\theta}$ passing through $v$ having
slope $\tan(\theta)$. Further, there is a unique circle
$C_{\theta,v} = C_{\theta,v}(R)$, of radius $R$, passing through $v$,
with tangent line having slope $\tan \theta$, and whose center lies
{\em above} $L_{v,\theta}$.

We define a probability measure $\mu$ on the collection of circles
$$
\{ C_{\theta,v} : v \in \partial_{L} S, \, \theta \in (0,\pi/4) \}
$$
via 
$$
\mu(I \times J) =
\frac{1}{\pi/4}\int_{I} \mu_{\theta}(J) \, d \theta
$$
for $I \subset (0,\pi/4)$ an interval, and $J \subset \partial_{L} S$
a horizontal or vertical interval.
It will be
convenient to use the shorthand
\begin{equation}
  \label{eq:dmu}
d\mu=
\begin{cases}
  \sin \theta \, d\theta \, dx, \\
  \cos \theta \, d\theta \, dy,
\end{cases}
\end{equation}
where $dx$ is viewed as a measure on the horizontal part of
$\partial_{L}(S)$ and $dy$ as a measure on the vertical
part of $\partial_{L}(S)$.

\subsection{Correlations via the family of circles}
\label{sec:corr-via-family}

To study the joint distribution of $A(\lambda_{j}), A(\lambda_{j+k})$,
as $j$ varies,
we translate each intersection $S_{\lambda_{j}} \cap C(R)$ by
$-\lambda_{j}$, and obtain a family of circles
$C_{v(\lambda_{j}),\theta(\lambda_{j})}$; each translation maps the
pair $(\lambda_{j}, \lambda_{j+k})$ to $(0,\lambda'_{j,k})$ where
$d(\lambda'_{j,k},0)  = k
= \lVert \lambda'_{j,k} \rVert_{1} + O(1)$, and $\lambda'_{j,k}$ lies in
the first octant (as $k>0$.)

Fix $v \in \partial_{L} S$. Given a lattice point
$\lambda$ in the first octant such that
$d(0,\lambda)=k$ (with $k$ large) the set of $\theta \in (0,\pi/4)$
such that $C_{\theta,v} \cap S_{\lambda} \neq \emptyset$ consists of
an open interval $I = (\theta_{1},\theta_{2}) \subset (0,\pi/4)$, with
$|I|=\theta_{2}-\theta_{1} \asymp 1/k = o(1)$, as $k$ grows.

Further, letting $B_{\theta,v}$ denote the ball having boundary
$C_{\theta,v}$, it is straightforward to see that
$$
| B_{\theta,v} \cap S| =
| B_{v,\theta_{1}} \cap S| + o(1) 
$$
if $|\theta-\theta_{1}|=o(1)$. This will  be used to show that the
areas inside $B_{\theta,v}$ and the two squares $S$ and $S_{\lambda}$
{\em approximately decouple} as long as the angle variation is $o(1)$,
in particular that
$$
\int_{I} | B_{\theta,v} \cap S| \cdot |B_{\theta,v} \cap S_{\lambda}|
\, d\theta
=
| B_{v,\theta_{1}} \cap S|  \cdot
\int_{I}  |B_{\theta,v} \cap S_{\lambda}|
\, d\theta
+ o(1)
$$
for $|I|=o(1)$.

To start, we show that knowledge of $\theta$ and $v$, with sufficient
precision and provided we avoid ``corner cases'', uniquely determines
$\lambda$ such that that 
$C_{\theta,v} \cap S_{\lambda} \neq
\emptyset$ with
$d(\lambda,0 ) = k = \lVert \lambda - v \rVert_{1} +O(1)$, as well
as determining the area $|B_{\theta,v} \cap S_{\lambda}|$ up to a
small error.  More precisely, that both $C_{\theta,v}$ and
$C_{v',\theta'}$ intersect $S_{\lambda}$ provided that $|v-v'|=o(1)$,
$|\theta-\theta'| = o(1/|\lambda|)$, and that the intersection
$C_{\theta,v} \cap S_{\lambda}$ is close neither to the upper left
corner, nor the lower right corner, of $S_{\lambda}$. Furthermore, in
this case we also have 
$|B_{\theta,v} \cap S_{\lambda}| = |B_{v',\theta'} \cap S_{\lambda}|
+o(1)$.

We begin by showing that the set of angles $\theta$ for which
$C_{\theta,v}$ intersects $S_{\lambda}$, for $\lambda \in \Lambda$
fixed with $d(0,\lambda)=k$ and $k$ large, depends mildly on the point
$v$.
\begin{lem}
  \label{lem:control-angles}
  Let $\lambda \in \Lambda$ with $|\lambda| =o(R)$ and
  $1/|\lambda|=o(1)$, and fix $v \in \partial_L(S)$. Then
  $I_{v}(\lambda) := \{ \theta : C_{\theta,v} \cap S_{\lambda} \neq
  \emptyset \}$ is an interval $(\alpha,\beta)$ of length
  $|I_{v}(\lambda)| \asymp 1/|\lambda|$. Further, if
  $v' \in \partial_{L}(S)$ satisfies
  $|\lambda|/R \le |v-v'| \le \delta$, then
  $I_{v'}(\lambda) = (\alpha+O(\delta/|\lambda|),
  \beta+O(\delta/|\lambda|))$.
\end{lem}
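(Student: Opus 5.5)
Our plan is to compare each circle $C_{\theta,v}$ with its tangent line $L_{v,\theta}$ near $v$, and reduce everything to elementary geometry of lines through $v$ hitting the square $S_\lambda$. We parametrize $C_{\theta,v}$ by arclength $s$ from $v$. Since the radius is $R$ and $\lambda$ lies at distance $|\lambda|$ from $v$, the portion of the circle near $S_\lambda$ corresponds to $s \asymp |\lambda|$. On this portion the circle deviates from the tangent line by $O(s^2/R)=O(|\lambda|^2/R)$. This deviation is $o(|\lambda|)$, though not necessarily small in absolute terms. So we work with angles rather than with positions.

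\textbf{Step 1: reduction to rays from $v$.} The point $\gamma(s)$ on $C_{\theta,v}$ at arclength $s$ has direction, as seen from $v$, equal to $\theta + s/(2R)$ exactly: this is the chord--tangent angle. Hence $C_{\theta,v}$ meets $S_\lambda$ iff there is some $s$ with $|\gamma(s)-v|=2R\sin(s/2R)$ such that the point at polar angle $\theta+s/2R$ and this distance lies in $S_\lambda$. As $\theta$ increases, the whole arc rotates rigidly about $v$. Moreover $\partial_\theta$ of the point at fixed $s$ is a vector of length $\asymp s \asymp |\lambda|$, orthogonal to the radius from $v$. Since $S_\lambda$ subtends an angle $\asymp 1/|\lambda|$ as seen from $v$, and since $s/2R = O(|\lambda|/R)=o(1)$ varies by only $O(1/R)$ across $S_\lambda$, the set of admissible $\theta$ is the set of $\theta$ for which the rotated arc passes through $S_\lambda$.

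\textbf{Step 2: interval structure and length.} Rigid rotation of a convex-ish arc piece about $v$ sweeps monotonically across the convex set $S_\lambda$. Concretely, for fixed distance $r$ the map $\theta\mapsto$ polar angle is a translation, and the distance $r(s)$ is monotone in $s$. Consequently the set $\{\theta\}$ is the projection of a connected set, namely the intersection of the strip of the arc with $S_\lambda$ in polar coordinates about $v$, and so it is an interval $(\alpha,\beta)$; openness comes from the half-open square together with the boundary conventions. Its length equals the angular width of $S_\lambda$ seen from $v$, up to the $O(1/R)$ correction from the variation of $s/2R$. That angular width is $\asymp 1/|\lambda|$, because $S_\lambda$ has diameter $\asymp 1$ and lies at distance $\asymp|\lambda|$ from $v$. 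Both bounds follow because the square contains a disc of radius $1/2$ and is contained in one of radius $1$. Hence $|I_v(\lambda)|\asymp 1/|\lambda|$, since $1/R=o(1/|\lambda|)$.

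\textbf{Step 3: dependence on $v$.} Moving $v$ to $v'$ with $|v-v'|\le\delta$ changes the polar angles of all points of $S_\lambda$ as seen from the base point by $O(\delta/|\lambda|)$, because the points lie at distance $\asymp|\lambda|$. It also changes the correction term $s/2R$ by $O(\delta/R)$, which is $\le O(\delta/|\lambda|)$. Thus the endpoints $\alpha,\beta$, being the extreme polar angles (minus the correction), move by $O(\delta/|\lambda|)$. The lower bound $|v-v'|\ge|\lambda|/R$ is only needed to absorb additive errors of size $|\lambda|/R\cdot 1/|\lambda| = 1/R$ into $O(\delta/|\lambda|)$.

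The main obstacle is making Step 2 rigorous, specifically showing that the admissible set is genuinely connected rather than merely having the right measure. We would handle this by working in polar coordinates $(r,\phi)$ centered at $v$. There the curve is the graph $\phi = \theta + \arcsin(r/2R)$. The square, lying in the first octant far from $v$, is a region between two monotone angular boundary functions of $r$. The admissible $\theta$ are then the values of a continuous function over a connected range, hence form an interval.
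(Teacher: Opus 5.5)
Your proof is correct and is essentially the paper's argument. Both rest on the chord--tangent identity $\arg(w-v)=\theta+\arcsin(|w-v|/2R)$ for $w\in C_{\theta,v}$, so $I_v(\lambda)$ is the range of $F_v(w)=\arg(w-v)-\arcsin(|w-v|/2R)$ over $S_\lambda$, and its length is the angular width of $S_\lambda$ seen from $v$ up to $O(1/R)$. The paper evaluates $F_v$ only at the two extremal corners $\lambda+1$ and $\lambda+i$; these lie outside the half-open square, which is what makes your remark about openness precise. Your sup/inf formulation over the connected set $S_\lambda$ has the advantage of also proving the interval property, which the paper takes for granted.

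One remark on Step 3: your uniform bound $\sup_{w\in S_\lambda}|F_v(w)-F_{v'}(w)|\ll\delta/|\lambda|+\delta/R$ has no additive $O(1/R)$ term. So in your version the hypothesis $|v-v'|\ge|\lambda|/R$ is not needed at all. It is needed only in the paper's version, which replaces $\arcsin(|\lambda+1-v|/2R)$ by $\arcsin(|\lambda|/2R)+O(1/R)$ separately for $v$ and $v'$.
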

\begin{proof}
Here and in what follows it will be convenient to identify $\R^{2}$
with $\C$. 
  Let $\theta_{1}<\theta_{2}$ denote angles such that
  $C_{v,\theta_{1}}$ passes through $\lambda+1$ and $C_{v,\theta_{2}}$
  passes through $\lambda+i$, and let
  $L_{1},L_{2}$ denote secant lines passing through $v,
  \lambda_{1}+1$ respectively $v, \lambda_{1}+i$, say having slopes
  $\tan \theta'_1$ respectively $\tan \theta'_2$.  Then
 $\theta'_{1}-\theta_{1} = \arcsin \frac{|\lambda+1-v|}{2R}=\arcsin
 \frac{|\lambda|}{2R}+O(1/R)$ and similarly
 $\theta'_{2}-\theta_{2} = \frac{|\lambda|}{2R}+O(1/R)$.  Thus, since
 $\theta'_{2} - \theta'_{1} = \arg( \lambda+i - v) - \arg(
 \lambda+1-v) \asymp 1/|\lambda|$ we find that $\beta-\alpha =
 \theta_{2}-\theta_{1} = \theta'_{2}-\theta'_{1} +O(1/R) 
 \asymp 1/|\lambda| + O(1/R) \asymp 1/|\lambda|$.

 The argument for the second assertion is similar: let $\alpha'$
 denote the angle such that $C_{v',\alpha'}$ passes through
 $\lambda+1$ and $\beta'$ the angle such that $C_{v',\alpha'}$ passes
 through $\lambda+i$. On letting $L_{1},L_{2}$ denote two secant lines
 passing through $v, \lambda+1$ and $v', \lambda +1$ we find, as
 before, that
 $\alpha'-\alpha = \arg( \lambda+1-v) - \arg( \lambda+1-v') + O(1/R) =
 O\left(|v-v'|/|\lambda| + O(1/R)\right) = O(\delta/|\lambda|)$.  We similarly find
 that $\beta' = \beta + O(\delta/|\lambda|)$.

\end{proof}

\begin{lem}
  Let $I \times J$ be a product of intervals such that
  $C_{\theta,v} \cap S_{\lambda} \neq \emptyset$ for all
  $(\theta,v) \in I \times J$, with $|I|=o(1/|\lambda|)$,
  $|\lambda|=o(R)$, and $|J|=o(1)$, as $R$ grows.  Then, if
  $(\theta',v') \in I \times J$ we have
$$
|B_{\theta,v}
\cap S_{\lambda}|
=
|B_{v',\theta'}
\cap S_{\lambda}|
+o(1)
$$
for all $(\theta,v) \in I \times J$, as $R$ grows.  Further, if
$(\theta(\lambda_{i}),v(\lambda_{i})) \in I \times J$ then
$\lambda_{i+k} = \lambda_{i} + \lambda$.

\end{lem}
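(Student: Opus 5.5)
Two things have to be proved: the areas $|B_{\theta,v}\cap S_\lambda|$ are essentially constant on $I\times J$, and the ordering is stable ($\lambda_{i+k}=\lambda_i+\lambda$). The plan is to first make both the arc and the area uniformly continuous in $(\theta,v)$ near $S_\lambda$, and then read off the ordering claim from the $L^1$-distance remark preceding Section~\ref{sec:lattice-points-sixth}.

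\textbf{Step 1 (geometry of $C_{\theta,v}$ near $S_\lambda$).} Parametrize $C_{\theta,v}$ near $S_\lambda$ by its deviation from the tangent line $L_{v,\theta}$. Since $|\lambda|=o(R)$, the circle lies within distance $O(|\lambda|^2/R)$ of $L_{v,\theta}$ over a stretch of length $\asymp|\lambda|$. Write $\ell:=|\lambda|$ and $\ell^2/R=o(\ell)$. This crude bound alone is too weak (it can exceed $1$), so I would instead compare two circles of the same radius directly. The difference between $C_{\theta,v}$ and $C_{\theta',v'}$, measured as a vertical (or horizontal) displacement at points of $S_\lambda$, is bounded by
\[
|v-v'| + |\lambda|\,|\theta-\theta'| + O\bigl(|\lambda|^2|\theta-\theta'|/R\bigr).
\]
Here the first term comes from the base point, the second from rotating the chord over length $\asymp|\lambda|$, and the third is the curvature correction. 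With $|v-v'|\le |J|=o(1)$ and $|\theta-\theta'|\le|I|=o(1/|\lambda|)$, this displacement is $o(1)$, uniformly over $S_\lambda$. The same computation underlies Lemma~\ref{lem:control-angles}, via the secant-angle estimate $\arcsin(|\lambda|/2R)$; I would redo it by writing each circle as a graph $y=f(x)$ over the relevant $x$-range, which is legitimate because slopes lie in $(0,1)$, and estimating $\|f_{\theta,v}-f_{\theta',v'}\|_\infty$ on $[\lambda_x,\lambda_x+1]$.

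\textbf{Step 2 (areas).} $|B_{\theta,v}\cap S_\lambda|$ is the area of the part of the unit square lying above the graph of $f$, suitably oriented with the centre above. If two graphs differ by at most $\eta$ in sup norm, the areas differ by at most $\eta$. Hence Step 1 gives $|B_{\theta,v}\cap S_\lambda|=|B_{\theta',v'}\cap S_\lambda|+o(1)$ uniformly on $I\times J$.

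\textbf{Step 3 (ordering).} Suppose $(\theta(\lambda_i),v(\lambda_i))\in I\times J$. Translating by $-\lambda_i$ maps $C(R)$ to $C_{\theta(\lambda_i),v(\lambda_i)}$, and by hypothesis this circle meets $S_\lambda$. Hence $C(R)$ meets $S_{\lambda_i+\lambda}$, so $\lambda_i+\lambda\in\Lambda(R)$. Both points lie in the same quadrant for $R$ large, since $|\lambda|=o(R)$ and Corollary~\ref{lattice-pts-in-small-arcs} applies. The arc of the translated circle from $v$ to $S_\lambda$ has monotone coordinates, so by the remark following Corollary~\ref{lattice-pts-in-small-arcs} the number of squares it traverses is $d(0,\lambda)=k$. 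That remark's converse then gives $\lambda_i+\lambda=\lambda_{i+k}$, provided $\arg(\lambda_i+\lambda)>\arg(\lambda_i)$, which holds because $\lambda$ lies in the first octant.

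The main obstacle is Step 1: getting the displacement bound with the correct dependence, in particular that the rotation term is $|\lambda|\,|\theta-\theta'|$ rather than something worse. I would do this with an explicit formula for the centre of $C_{\theta,v}$, namely $v+R\,ie^{i\theta}$, and differentiate in $\theta$ and $v$.
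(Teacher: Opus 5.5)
Your proposal is correct and follows essentially the same route as the paper, which tracks the height $y(\theta,v)$ at which $C_{\theta,v}$ meets the line $x=\lambda_x+1$, bounds $\partial y/\partial v=O(1)$ and $\partial y/\partial\theta=O(|\lambda|)$, and then uses that the area depends continuously on this position; that is exactly your displacement bound $|v-v'|+|\lambda|\,|\theta-\theta'|$ followed by continuity of the area. Your version (a sup-norm bound on the graphs over $[\lambda_x,\lambda_x+1]$, with the area $1$-Lipschitz in that norm) is slightly more careful, and your Step 3 proves the ordering claim, which the paper's proof does not address. Step 3 uses $C(R)-\lambda_i=C_{\theta(\lambda_i),v(\lambda_i)}$, the $L^1$-distance remark, and the contextual hypotheses $d(0,\lambda)=k$ with $\lambda$ in the first octant.
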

\begin{proof}

  Write $\lambda = (x,y)$ and let $\lambda+1 =(x+1,y(v,\theta))$
  denote the first point of intersection between $C_{\theta,v}$
  (starting at $v$ and traversing the circle counter clockwise) and
  the vertical line $L$ passing through $(x+1,0)$. As the family of
  circles $C_{\theta,v}$ intersects the line transversally, with angle
  of intersection
  uniformly bounded away from zero (here we use that
  $|\lambda|  =o(R)$), the function $y(\theta,v)$ is smooth,
  with $ \partial y /\partial v = O(1)$ and
  $\partial y / \partial \theta = O(|\lambda|)$.  Under our
  assumptions  it follows that $y(\theta,v) =
  y(\theta',v')+o(1)$ for 
  all $(\theta,v) \in I \times J$. Noting that the area
  $|B_{\theta,v} \cap S_{\lambda}|$ is a piecewise smooth function of
  $y$, the result follows.

\end{proof}

We next control the contribution of ``corner case'', i.e., the
contribution from $(\theta,v) \in C_{\theta,v} \cap S_{\lambda} \neq
\emptyset$ for {\em some} 
$(\theta,v) \in I \times J$ but not for {\em all }
$(\theta,v) \in I \times J$.
\begin{lem}
  \label{lem:corner-case-negligible}
  Fix $\lambda \in \Lambda$, with $d(0,\lambda) \asymp |\lambda|$
  large and $\arg(\lambda) \in (0,\pi/4)$.
Let  $J \subset \partial_{L}(S)$ be a horizontal
or vertical interval of length one, 
  and let $I_{\lambda}$ be a minimal (open) interval such that the set
  of $(\theta,v)$ 
  for which $C_{\theta,v} \cap S_{\lambda} \neq \emptyset$ is
  contained in $I_{\lambda} \times J$. Then $|I_{\lambda}| \asymp 1/|\lambda|$,  and
we may  choose integers $M = M(R)$, slowly growing with $R$ such
that if we divide $I_{\lambda}$ into $M$ parts and $J$ into $M$ parts,
we 
cover the set
$\{ (\theta,v) : C_{\theta,v} \cap S_{\lambda} \neq \emptyset \}$
with $\asymp M^{2}$ ``internal boxes'' $B_{ij}$,
$1 \le i,j \le M$, having side lengths $\asymp 1/(|\lambda|M)$ and $1/M$,
and the additional  property that 
$ C_{\theta,v} \cap S_{\lambda} \neq \emptyset $ for all
$(\theta,v) \in B_{ij}$, and $O(M)$ ``boundary'' boxes $B_{ij}$ with
the property that $C_{\theta,v} \cap S_{\lambda} = \emptyset $ for 
some $(\theta,v) \in B_{ij}$, and 
$C_{\theta',v'} \cap S_{\lambda} \neq \emptyset$ for some other 
$(\theta',v') \in B_{ij}$.
\end{lem}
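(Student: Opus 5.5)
The plan is to describe the set $E_\lambda := \{(\theta,v)\in (0,\pi/4)\times J : C_{\theta,v}\cap S_\lambda \neq \emptyset\}$ as the region lying between two graphs over $J$, and then to count grid boxes.

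\textbf{Step 1: the fibres are intervals with Lipschitz endpoints.} For each $v\in J$, Lemma~\ref{lem:control-angles} says that the fibre $I_v(\lambda)=(\alpha(v),\beta(v))$ is an interval of length $\asymp 1/|\lambda|$. Here $\alpha(v)$ is the angle for which $C_{v,\alpha}$ passes through the lower right corner $\lambda+1$, and $\beta(v)$ the angle for which it passes through the upper left corner $\lambda+i$. The second part of Lemma~\ref{lem:control-angles}, or directly the secant-line computation in its proof, gives
$$\alpha(v)-\alpha(v') = \arg(\lambda+1-v)-\arg(\lambda+1-v')+O(1/R) = O\bigl(|v-v'|/|\lambda|\bigr)+O(1/R),$$
and the same bound for $\beta$. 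Since $|\lambda|=o(R)$, the $O(1/R)$ term is negligible on the scale $1/|\lambda|$. (If one wants exact Lipschitz behaviour, apply the implicit function theorem to the smooth equation ``$C_{v,\theta}$ passes through a fixed point''.)

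\textbf{Step 2: size of $I_\lambda$.} Over all $v\in J$ (a set of diameter $\le 1$), the endpoints move by only $O(1/|\lambda|)$. Hence $I_\lambda=(\inf_v\alpha,\sup_v\beta)$ has length $\asymp 1/|\lambda|$: the lower bound comes from any single fibre, and the upper bound from the Lipschitz bound.

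\textbf{Step 3: the grid and the count.} Split $I_\lambda$ into $M$ equal pieces of length $\asymp 1/(|\lambda| M)$ and $J$ into $M$ pieces of length $1/M$, giving $M^2$ boxes $B_{ij}$. Fix a column $j$, i.e.\ a subinterval $J_j$ of $J$. As $v$ ranges over $J_j$, Step 1 shows that $\alpha(v)$ varies within a window of length $O(1/(|\lambda| M))$, which is $O(1)$ grid cells in the $\theta$-direction; the same holds for $\beta$. So a box in column $j$ is a boundary box only if it meets one of these two windows, and there are $O(1)$ such boxes per column. Every other box either lies entirely inside $E_\lambda$ (an internal box) or entirely outside it. Summing over the $M$ columns gives $O(M)$ boundary boxes.

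The number of internal boxes is the number of cells in column $j$ lying strictly between the two windows. This is at least $|I_v(\lambda)|\cdot |\lambda| M - O(1) \gg M$, using the lower bound $\beta-\alpha\gg 1/|\lambda|$ uniformly in $v$ from Lemma~\ref{lem:control-angles}. Summing over columns gives $\gg M^2$ internal boxes, and trivially at most $M^2$. So there are $\asymp M^2$ internal boxes, with the stated side lengths.

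\textbf{Choice of $M$ and the main obstacle.} $M$ may be any integer tending to infinity, but it should grow slowly enough that the $O(1/R)$ errors satisfy $1/R = o(1/(|\lambda| M))$. Given $|\lambda|=o(R)$, one can take $M\to\infty$ with $M|\lambda|/R\to 0$.

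The step I expect to need the most care is uniformity. The implied constants in Lemma~\ref{lem:control-angles} must be uniform in $v\in J$ and in $\lambda$ with $\arg(\lambda)\in(0,\pi/4)$. In particular, near the corner $v=0$ of $\partial_L(S)$, and when switching between the horizontal and vertical parts of $J$, the comparison of secant angles $\arg(\lambda+1-v)$ must remain $O(|v-v'|/|\lambda|)$. This holds because $|\lambda+1-v|\asymp|\lambda|$ for all $v\in S$.
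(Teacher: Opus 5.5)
Your proposal is correct and follows essentially the same route as the paper. In both arguments the fibres $(\theta_1(v),\theta_2(v))$ are cut out by the circles through the corners $\lambda+1$ and $\lambda+i$. The tangent--chord relation $\phi_j(v)-\theta_j(v)=\arcsin(|\lambda+1-v|/2R)$ (respectively with $\lambda+i$ for $j=2$) gives $\theta_j(v)-\theta_j(v')\ll |v-v'|/|\lambda|+O(1/R)$, hence $|I_\lambda|\asymp 1/|\lambda|$ and $O(1)$ boundary boxes per column. Your explicit condition $M|\lambda|/R\to 0$ and your count of internal boxes only spell out what the paper leaves implicit in ``$M$ slowly growing''.
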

\begin{proof}
Given $v \in \partial_{L}(S)$ the set
of $\theta$ such that $C_{\theta,v} \cap S_{\lambda} \neq \emptyset$
is an open 
interval $(\theta_1(v), \theta_{2}(v))$. Let $L_{1}(v), L_{2}(v)$
denote the two lines passing through $v,\lambda+1$ and $v,\lambda+i$;
we note that $C_{v,\theta_{1}(v)} \cap L_{1}(v) = \{v, \lambda+1\}$
and $C_{v,\theta_{2}(v)} \cap L_{2}(v) = \{v, \lambda+i\}$.  Let
$\phi_{1}(v) := \arg( \lambda+1-v)$ denote the angle between $L_{1}$
and the horizontal axis, and let $\phi_{2}(v) := \arg( \lambda+i-v)$
denote the angle between $L_{2}$ and the horizontal axis.  We then
find, as $\arg(\lambda) \in (0, \pi/2)$,  that
\begin{multline}
  \label{eq:phi1-phi2}
\phi_{2}(v) - \phi_{1}(v) = \arg( (\lambda+i-v ) / (\lambda+1-v))
= \\
\arg( (1+i/(\lambda-v))/(1+1/(\lambda-v)))
=  \\ \arg( 1 + (i-1)/(\lambda-v)) + O(1/|\lambda|^{2})
\asymp 1/|\lambda|.
\end{multline}
Similarly, if $v' \in \partial_{L}(S)$, we find that
\begin{equation}
  \label{eq:phiv-phivprime}
\phi_{j}(v)-\phi_{j}(v') \ll |v-v'|/|\lambda| + O(1/|\lambda|^{2})
\end{equation}
for $j=1,2$.

Further, since $\phi_1(v)-\theta_{1}(v)$ is the angle between the
tangent line of $C_{v,\theta_{1}}$ at $v$ and a chord on
$C_{v,\theta_{1}}$, of length $|\lambda+1-v|$, we find that\fixmehide{why?
fix v, need $\gg 1/|\lambda|$}
$\phi_1(v)-\theta_{1}(v) = \arcsin( |\lambda+1-v|/(2R)) =
\arcsin(|\lambda|/(2R)) + O(1/R)$, uniformly for
$v \in \partial_{L}(S)$. Similarly,
$\phi_2(v)-\theta_{2}(v) = \arcsin(|\lambda|/(2R)) + O(1/R)$ also holds
uniformly in $v$.  Consequently, we have
$$
\theta_{j}(v)-\theta_{j}(v') \ll |v-v'|/|\lambda|
+ O
\left(1/|\lambda|^{2} + 1/R \right).
  $$

  In particular, we find that
  $\theta_{j}(v)-\theta_{j}(v') \ll 1/|\lambda|$ which, together with
  $\phi_{2}(v) - \phi_{1}(v) \asymp 1/|\lambda|$, implies that the
  minimal interval $I$ is of size $ \asymp 1/|\lambda|$. Similarly, subdividing
  $I_{\lambda} \times J$ into $\asymp M^{2}$ boxes $B_{ij}$ having side lengths
  $\asymp 1/(M |\lambda|)$ and $1/M$ we find that, for each $i$
  there, are $O(1)$ values of $j$ for which $B_{ij}$ is a boundary
  box, for a total of $O(M)$ boundary boxes, and $\asymp M^{2}$
  internal boxes.
\end{proof}

\subsection{Second intersection area averages}
\label{sec:second-intersection-area-averages}
To show that the average of $|S_{\lambda} \cap B_{\theta,v}|$, as
$\theta$ ranges over elements in the interval
$I = (\theta_1, \theta_{2}) = \{ \theta : C_{\theta,v}\cap S_{\lambda}
\neq \emptyset \} $, equals $1/2+o(1)$ (for $R,|\lambda|$ large) we
approximate $C_{\theta,v}$ by a family of lines passing through $v$,
and in turn approximate these lines by a family of parallel lines.

More precisely, chose $n$ to be a unit vector orthogonal to the vector
$\lambda=\lambda-0$, and let $L$ be the line passing through $\lambda$ in the
direction given by $n$. As $\theta$ ranges over angles in $I$, the
intersection point $w = w(\theta) = L \cap C_{\theta,v}$ can be
written as $w = \lambda + t \cdot n$ for $t = t(\theta)$ ranging over
some interval $T$.
Moreover, since $k$ is growing, we can write
(cf. (\ref{eq:dmu})) $dt = (1+o(1)) \cdot \sin \theta_{1} \, d\theta$
for $x$ fixed (or $dt = (1+o(1)) \cdot \cos \theta_{1} \, d\theta$ for
$y$ fixed). In other words, the pushforward of the measure $\mu$, for
$v=(x,0)$ or $v=(0,y)$ fixed, to the interval $T$ is, up to negligible
error, the uniform measure (up to a scalar depending on $\theta$)

Let $L_{t}$ denote the line through the point $\lambda + t \cdot n$, in
the direction of $\lambda$, and let $H_{L_t}$ denote the half-plane
bounded by $L_{t}$ and containing the center of $C_{\theta,v}$.  We
then have $\arg(w-v) = \theta + O(k/R) = \theta +o(1)$, since the
curvature of $C_{\theta,v} $ equals $1/R$ and
$|v-\lambda| \asymp k = o(R)$, and thus
$|B_{\theta,v} \cap S_{\lambda}| = |H_{L_{t(\theta)}} \cap
S_{\lambda}| + o(1)$.

Moreover, if we let
$T' = \{ t : L_{t} \cap S_{\lambda} \neq \emptyset \} =
(t_{1},t_{2})$, we have $t_{i} = t_{\theta_{i}} + o(1)$ for $i=1,2$.
In particular, the average of $|H_{L_{t(\theta)}} \cap S_{\lambda}|$
for $\theta \in (\theta_{1},\theta_{2})$ is, up to an $o(1)$-error,
the same as the average $|H_{L_{t}} \cap S_{\lambda}|$ for $t \in T'$.
We next show that the average of the area contained in $H_{t}$ and
$S_{\lambda}$, as $t \in T'$, equals $1/2$.
\begin{lem}
\label{lem:1/2}
  Given a unit vector $ u \in \R^{2}$ (not parallel to the axes),
  define a family of parallel lines $\{L_{t}\}_{t \in \R}$ defined by
  $L_{t} \perp u$ and $t \cdot u \in L_{t}$, and a collection of half
  planes $H_{t}$ having $L_{t}$ as the boundary, and $u$ pointing into
  $H_{t}$.  Given a square $S_{\lambda}$ the set
  $T := \{ t : S_{\lambda} \cap L_{t} \neq \emptyset \}$ is a finite
  open interval, and we have
$$
\int_{T} |H_{t} \cap S_{\lambda}| \, dt
=  |T|/2.
$$
\end{lem}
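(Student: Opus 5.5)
The plan is to use a symmetry of the square: the point reflection through its center $c := \lambda + (1/2,1/2)$. First observe that $T$ is a finite open interval. The function $t \mapsto \langle p, u\rangle$ is linear and $S_\lambda$ is convex and bounded. So $L_t \cap S_\lambda \neq \emptyset$ exactly when $t$ lies in the open range of $\langle p,u\rangle$ over the interior of $S_\lambda$, up to the boundary values $t_1 = \min_{\bar S_\lambda}\langle p,u\rangle$ and $t_2 = \max_{\bar S_\lambda} \langle p,u\rangle$. These extremes are attained at corners, and since $u$ is not parallel to the axes they are attained at single corners. Measure-zero endpoint issues do not affect the integral, so we may take $T=(t_1,t_2)$. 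Also $t_1<t_2$, since $S_\lambda$ has nonempty interior.

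Next, set $a(t) := |H_t \cap S_\lambda|$, where $H_t = \{p : \langle p,u\rangle \ge t\}$, since $u$ points into $H_t$. The key identity is
\[
a(t) + a(t_1+t_2-t) = 1 \qquad (t \in T).
\]
To prove it, let $\sigma(p) = 2c - p$ be the point reflection. It preserves $S_\lambda$ up to boundary (measure zero) and preserves area. We have $\langle \sigma(p),u\rangle = 2\langle c,u\rangle - \langle p,u\rangle$, and $2\langle c,u\rangle = t_1+t_2$, because the extreme corners are swapped by $\sigma$, so $c$ is the midpoint of the segment joining them. Hence $\sigma$ maps $H_t\cap S_\lambda$ onto $\{p \in S_\lambda : \langle p,u\rangle \le t_1+t_2-t\}$. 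This set is the complement in $S_\lambda$ of $H_{t_1+t_2-t}\cap S_\lambda$, up to the null set $L_{t_1+t_2-t}$. Taking areas gives the identity.

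Finally, integrate the identity over $T$. The substitution $t \mapsto t_1+t_2-t$ maps $T$ to itself, so $\int_T a(t)\,dt = \int_T a(t_1+t_2-t)\,dt$. Therefore $2\int_T a(t)\,dt = |T|$, which is the claim. Measurability is not an issue, since $a$ is continuous and monotone in $t$.

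The argument is short, and the only point needing care is $2\langle c,u\rangle = t_1+t_2$. It follows because $\sigma$ maps $\bar S_\lambda$ to itself and reverses the linear functional about $\langle c,u\rangle$, so the maximum and minimum are symmetric about $\langle c,u\rangle$. An alternative is a direct Fubini computation: $\int_T a(t)\,dt = \int_{S_\lambda} (\langle p,u\rangle - t_1)\,dp = \langle c,u\rangle - t_1 = |T|/2$, which gives the same result.
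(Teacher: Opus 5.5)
Your proof is correct and follows essentially the paper's argument: the central symmetry of the square gives $a(t)+a(t_1+t_2-t)=1$ on $T$, and integrating this over the interval $T$, which is symmetric about its midpoint, gives $|T|/2$. Your point reflection $p\mapsto 2c-p$ is in fact the right symmetry. After making the lines horizontal and centering the square at the origin, the paper speaks of ``reflecting in the $x$-axis'', which in general does not preserve a tilted square, whereas rotation by $\pi$ about the center always does.
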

\begin{proof}
  After applying a translation composed with a rotation we may assume
  that the collection of lines $\{L_{t} \}$ are parallel with the
  $x$-axis, and where $S_{\lambda}$ is replaced by a square $S'$, having
  center of mass at the origin (but sides not necessarily parallel
  with the coordinate axes), and $T = [-\tau,\tau]$ for some
  $\tau \in [1/2,1/\sqrt{2}]$. Letting $A(t)$ denote the area above
  $L_{t}$ we find, by reflecting in the $x$-axis, that
  $A_{1}(-t) = 1-A_{1} (t)$.  Thus
  $$
  \int_{-\tau}^{\tau} A_{1}(t) \, dt =
  \int_{0}^{\tau} A_{1}(t) \, dt +
  \int_{0}^{\tau} (1-A_{1}(t)) \, dt
  = \tau = \frac{|T|}{2}.
  $$

\end{proof}

\section{From lattice points to the continuum model}
\label{sec:from-lattice-points}

\subsection{Circle arc approximation via rational slope lines}
\label{sec:circle-line-rational}

We begin by studying the distribution of the intersections $(C(R) \cap
\partial_L(S_{\lambda}))-\lambda$ for $\lambda \in C(R,I)$ and $I$ a
small angular interval.  To do this, we approximate the circular arc,
first by the tangent line, which is then approximated by a line with
rational slope a certain Farey fraction.
Given $x,y \in \R$, let $D(x,y)$ denote the
distance between $x \mod 1$ 
and $y \mod 1$, i.e.,  $D(x,y) := \min_{k \in \Z} |x-y-k|$.

\begin{lem}
\label{lem:approximate-arc-by-line}  
  Let $f \in C^{2}([-T,T])$ be in increasing function such that
  $f(0)=\alpha$ and
  $ \max_{t : |t| \le T}( |f(T) - (\alpha+\beta t)|) \le \epsilon$ for
  some $\beta > 0$.  For integer $k \in [-T,T]$, let $y_{k} = f(k)$,
  and put $\tilde{y}_{k} = \beta k$.  Similarly, for integer
  $l \in [f(-T),f(T)]$, let $x_{l}$ denote the unique solution to
  $f(x) = l$, and similarly let $\tilde{x}_{l}$ denote the unique
  solution to $\alpha+ \beta x = l$.  We then have
  $D(y_{k},\tilde{y}_{k}) < \epsilon$ for all $k \in [-T,T] \cap \Z$.
  Similarly, we have $D(x_{l},\tilde{x}_{l}) \le \epsilon/\beta$ for
  $l \in [f(-T),f(T)] \cap \Z$.
\end{lem}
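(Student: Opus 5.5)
The plan is to compare $f$ directly with its linear approximant $g(t) := \alpha + \beta t$ and use the hypothesis $\max_{|t|\le T}|f(t)-g(t)| \le \epsilon$ pointwise. Smoothness of $f$ plays no role beyond monotonicity and continuity. Throughout I read the comparison values as the values of the approximating line, i.e. $\tilde y_k = g(k) = \alpha+\beta k$. If one insists on $\tilde y_k = \beta k$, then the claim only makes sense modulo the constant shift $\alpha$, which should be normalised to be $0 \bmod 1$ (or absorbed by translating the lattice). I would state this convention explicitly.

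For the first assertion, take an integer $k \in [-T,T]$. Then $|y_k - \tilde y_k| = |f(k) - g(k)| \le \epsilon$ by hypothesis. Since $D(x,y) \le |x-y|$ (take the integer shift $0$ in the minimum defining $D$), we get $D(y_k,\tilde y_k) \le \epsilon$. The statement writes a strict inequality. I would either replace it by $\le$, which is all that is used later, or note that one may apply the hypothesis with any $\epsilon' > \epsilon$ and then pass to the limit. I would not spend effort on this point.

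For the second assertion, first check that $x_l$ is well defined and lies in $[-T,T]$. Since $f$ is continuous and increasing on $[-T,T]$, the intermediate value theorem gives, for each integer $l \in [f(-T),f(T)]$, a solution $x_l \in [-T,T]$ of $f(x_l) = l$; if $f$ is strictly increasing it is unique, as the statement implicitly assumes. Since $\beta>0$, the point $\tilde x_l = (l-\alpha)/\beta$ is well defined. The key identity is
$$\beta\,(x_l - \tilde x_l) = g(x_l) - g(\tilde x_l) = g(x_l) - l = g(x_l) - f(x_l).$$
Applying the hypothesis at $t = x_l \in [-T,T]$ gives $|x_l - \tilde x_l| \le \epsilon/\beta$, and hence $D(x_l,\tilde x_l) \le \epsilon/\beta$.

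There is no real obstacle. The only points needing care are bookkeeping ones: the role of $\alpha$ in the definition of $\tilde y_k$, strict versus non-strict inequality, and making sure that the hypothesis is applied at a point of $[-T,T]$. For the last point it is essential to evaluate at $x_l$, not at $\tilde x_l$, since the latter may fall outside $[-T,T]$.
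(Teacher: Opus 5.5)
Your proposal is correct and follows the paper's own proof: for the first part, the pointwise bound at $t=k$ combined with $D(x,y)\le|x-y|$; for the second, the same bound at $t=x_l$, where $f(x_l)=l$, converted through the slope $\beta>0$ into $|x_l-\tilde x_l|\le\epsilon/\beta$. You also read the typos correctly ($f(t)$ for $f(T)$, $\tilde y_k=\alpha+\beta k$, and $\le$ for $<$), except that the strict inequality cannot be recovered by a limiting argument. It genuinely fails when the maximum $\epsilon\le 1/2$ is attained at an integer $k$, since then $D(y_k,\tilde y_k)=\epsilon$, so replacing it by $\le$ is the only correct fix.
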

\begin{proof}
  Since $D(x,y) \le |x-y|$ the first assertion follows from the fact that
  $\max_{t \in [-T,T]} |f(t)-(\alpha+\beta t)| \le \epsilon$.
  As for the second assertion, we note that  $f(x_{l})=l$ and
  $|f(x_{l})-(\alpha+\beta x_{l})| \le \epsilon$ implies that
  $\alpha+\beta x = l$ has 
  a solution within distance at most $\epsilon/\beta$ from $x_{l}$.
  
\end{proof}

\begin{definition}
Let $N,M>0$ be integers.  We say that a sequence
$\{ x_{n} \mod 1\}_{n \leq N}$ is $\epsilon$-equidistributed at
scale
$M$ if for all intervals $J \subset [0,1]$ satisfying $|J| > 1/M$, we
have
$|\{ n \leq N : x_{n} \in J \}| = N \cdot |J| \cdot (1+\theta)$
for some $\theta$ with $|\theta| < \epsilon$.
\end{definition}
For simplicity, we say
that such a  sequence is ``approximately equidistributed''.

After locally approximating $C(R)$ by lines whose slope $\beta$ is
given by Farey fractions, we will be concerned with the intersection
of lines of the form $y= \alpha + \beta x$ for $\beta = p/q$ a
rational number (we tacitly assume $(p,q)=1$) whose height $\max(p,q)$
grows, with horizontal and vertical lines with at least one integer
coordinate.
\begin{lem}
\label{lem:linear-approx-equidistribution}  
Let $0 < p < q$ be coprime integers, and fix $\alpha \in [0,1]$.
If $M \leq q/\log q$ and $N \ge q \log q$ then $\{\alpha+\frac{p}{q} \cdot n
\mod 1\}_{n \leq N}$ is $\epsilon$-equidistributed at scale $M$ for
$\epsilon = O(1/\log q)$.

Similarly, for the $x$-coordinates, $\mod 1$, of the intersections with
horizontal lines we obtain points $x_{m}$ such that
$\alpha + \frac{p}{q} x_{m} = m \in \Z$, i.e.,
$x_{m} = \frac{q}{p} m + \alpha'$ for some $\alpha' \in \R$ and thus
approximate equidistribution holds with $\epsilon = O(1/\log p)$ if
$M \leq p/\log p$ and $N \ge p \log p$.
\end{lem}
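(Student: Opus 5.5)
The idea is to use the exact periodic structure of $\{\alpha + \frac{p}{q} n \bmod 1\}$. Since $(p,q)=1$, the map $n \mapsto pn \bmod q$ permutes the residues modulo $q$. So as $n$ runs over any block of $q$ consecutive integers, the points $\alpha + \frac{p}{q}n \bmod 1$ run exactly once over the shifted grid $\alpha + \frac{1}{q}\{0,1,\dots,q-1\} \bmod 1$.

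First I split $\{1,\dots,N\}$ into $\lfloor N/q\rfloor$ complete blocks of length $q$, plus a remainder of fewer than $q$ terms. I then count the grid points in a fixed interval $J\subset[0,1]$. The shifted grid $\alpha + \frac{1}{q}\Z$ has exactly $q|J| + O(1)$ points in $J$, with the $O(1)$ at most $1$ in absolute value.

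Each complete block therefore contributes $q|J| + O(1)$ points in $J$. The remainder contributes at most $q$ points, and in fact at most $q|J|+1$, but the trivial bound $q$ is enough. Summing over blocks gives
\[
|\{n\le N : x_n \in J\}| = \lfloor N/q\rfloor (q|J| + O(1)) + O(q) = N|J| + O(N/q) + O(q|J|) + O(q).
\]

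Next I compare the error with the main term $N|J|$. Using $|J| > 1/M \ge \log q/q$, the term $O(N/q)$ divided by $N|J|$ is $O(1/(q|J|)) = O(1/\log q)$. Using $N\ge q\log q$, the term $O(q)$ divided by $N|J|$ is $O(q/N) = O(1/\log q)$, and the middle term is even smaller. Hence the count equals $N|J|(1+\theta)$ with $|\theta| = O(1/\log q)$, uniformly in $\alpha$ and $J$. This proves the first assertion.

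For the second assertion, $x_m = \frac{q}{p} m + \alpha'$. Modulo $1$ we have $\frac{q}{p}m \equiv \frac{q \bmod p}{p}\, m$, and $\gcd(q \bmod p, p) = 1$. So the same argument applies with the roles of $p$ and $q$ exchanged, giving $\epsilon = O(1/\log p)$ under $M\le p/\log p$ and $N \ge p\log p$.

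The only point needing care is the uniformity of the $O(1)$ boundary term per block, which does not depend on $\alpha$ because the grid is exactly a shifted lattice. I expect no real obstacle; the main thing to check is that the two error sources, the partial block and the interval endpoints, are each controlled by exactly one of the two hypotheses.
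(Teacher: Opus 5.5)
Your approach is the paper's: period $q$, the values over a period forming the shifted grid $\alpha+q^{-1}\Z$ modulo $1$, and a split of $[1,N]$ into complete blocks plus a remainder. Your reduction of the second assertion to the first via $\frac{q}{p}m\equiv\frac{q \bmod p}{p}\,m \pmod 1$ is also fine. However, your treatment of the incomplete block contains a genuine error. You say the trivial bound $q$ for the remainder is enough, and then write that $O(q)$ divided by $N|J|$ is $O(q/N)$. In fact it is $O\bigl(q/(N|J|)\bigr)$, which is larger by a factor $1/|J|$. Under the hypotheses, $|J|$ can be just above $\log q/q$ and $N$ can be about $q\log q$. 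Then the main term $N|J|$ is only about $(\log q)^2$, while the remainder bound you use is $q$. The resulting relative error $q/(N|J|)$ can be as large as $q/(\log q)^2$, not $O(1/\log q)$. With the trivial bound you would need roughly $N\ge q^2$, not $N\ge q\log q$. You also have the comparison backwards: the ``middle term'' $O(q|J|)$ is the one that yields $O(q/N)$, and the $O(q)$ term is the largest, not the smallest.

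The fix is the bound you mention in passing and then discard. The $r<q$ leftover indices give distinct points of the same shifted grid, so at most $q|J|+1$ of them lie in $J$. This is exactly what the paper uses (``the final incomplete block contributes at most $O(q|J|)$ points''). The total error is then $O(N/q+q|J|+1)$. Dividing by $N|J|$ gives three terms:
\begin{itemize}
\item $O(1/(q|J|))=O(1/\log q)$, since $q|J|>\log q$;
\item $O(q/N)=O(1/\log q)$, since $N\ge q\log q$;
\item $O(1/(N|J|))=O(1/(\log q)^2)$.
\end{itemize}
With this change your proof is correct and coincides with the paper's. In particular, each of the two genuine error sources is then controlled by exactly one hypothesis, as you anticipated: the block endpoints by $M\le q/\log q$, and the partial block by $N\ge q\log q$.
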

\begin{proof}
The sequence
\[
x_n := \left\{\alpha+\frac{p}{q}n\right\}
\]
is periodic modulo $1$ with period $q$. Since $(p,q)=1$, the set of values attained over one period is exactly a translate of the rational grid $q^{-1}\mathbb{Z}/\mathbb{Z}$. Consequently, for any interval $J\subset[0,1]$,
\[
\#\{1\le n\le q : x_n\in J\} = q|J| + O(1),
\]
uniformly in $J$ and $\alpha$.

Write $N = aq + r$ with $0 \le r < q$. Then
\[
\#\{n\le N : x_n\in J\}
=
a\,\#\{1\le n\le q : x_n\in J\} + O(q|J|),
\]
since the final incomplete block contributes at most $O(q|J|)$ points. Using the previous estimate and $a = N/q + O(1)$, we obtain
\[
\#\{n\le N : x_n\in J\}
=
N|J| + O\!\left(\frac{N}{q} + q|J|\right).
\]

Now assume $|J| > 1/M$. From the hypothesis $M \le q/\log q$ we obtain
$q|J| > \log q.$
Also $N \ge q\log q$, hence
\[
\frac{1}{q|J|} \ll \frac{1}{\log q},
\qquad
\frac{q}{N} \ll \frac{1}{\log q}.
\]
Dividing the previous estimate by $N|J|$ therefore gives
\[
\#\{n\le N : x_n\in J\}
=
N|J|\Bigl(1 + O\!\bigl(\tfrac{1}{\log q}\bigr)\Bigr),
\]
uniformly for all intervals $J\subset[0,1]$ with $|J|>1/M$ which is the claimed result.
\end{proof}

\subsection{Rational slope lines and Farey fractions}
\label{sec:some-results-related}
For $Q>0$ an integer let
$F_Q :=\{ p/q : 0 \le p \le q \le Q, (p,q)=1\}$ denote the set of
Farey fractions of height at most $Q$.

Given $p/q \in F_{Q} \setminus \{0,1\}$, define a half open interval
\begin{align}
  \label{eq:int}  
  \begin{split}
    I_{p,q,Q} :&=
[(p+p')/(q+q'), (p+p'')/(q+q''))
\\ &= [p/q - 1/(q(q+q')),p/q + 1/(q(q+q'')) )  
  \end{split}
\end{align}
where $p'/q' < p/q < p''/q''$ are consecutive
elements in $F_{Q}$. The collection of intervals
$\{ I_{p,q,Q} \}_{p/q \in F_{Q} \setminus \{0,1\}}$ is the {\em Farey
  dissection}, and gives a disjoint union of $[1/(Q+1), Q/(Q+1))$
(cf. \cite[\S{3.8}]{hardy-wright-fifth}).  Since the mediant
$(p+p')/(q+q')$ is not in $F_{Q}$, we must have $q+q' > Q$ and thus
$|I_{p,q,Q}| \asymp 1/(qQ)$; in particular  $|I_{p,q,Q}| \gg 1/Q^{2}$.

We begin by bounding the number of lattice points associated with
parts of the circle where the tangent line slope is well approximated by
rationals having low height.
For comparison, we remark that the number of
$\lambda \in \Lambda(R,(3\pi/2, 7 \pi/4))$ such that
$\tan( \arg(\lambda)-3\pi/2 ) \in I$ is $\asymp R |I|$ provided that
$R|I| \gg 1$.

\begin{lem}
\label{lem:bad-slope-negligble}  
  Let $I \subset [0,1]$ be a closed interval. The number of
  $\lambda \in \Lambda(R,(3\pi/2, 7 \pi/4) )$ such that
  $\tan( \arg(\lambda)-3\pi/2 )\in I_{p,q,Q}$, either for some
  integers $0 < p < q \le Q/\log Q$, or $0 < p < q/\log q$ and
  $q \in [Q/\log Q, Q]$, and additionally that $p/q \in I$, is
  $o(R |I|)$ provided that $Q \le \sqrt{R}/\log R$ and
  $|I| \ge (\log \log Q) / \log Q$.
\end{lem}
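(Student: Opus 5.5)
The plan is to convert the count of lattice points into a measure count on slopes, then bound the total measure of the ``bad'' Farey intervals meeting $I$.

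\textbf{From lattice points to slope measure.} For $\lambda$ in the seventh octant with $\arg(\lambda)=3\pi/2+\phi$, the tangent slope is $s=\tan\phi\in(0,1)$. By Corollary~\ref{lattice-pts-in-small-arcs} (and Lemma~\ref{lem:points-near-curve}), an arc of angular length $|J|$ contains $\asymp R|J|+O(1)$ lattice points of $\Lambda(R)$. Since $d\phi/ds=1/(1+s^2)\asymp 1$ on $[0,1]$, the same bound holds for arcs whose slope set is an interval $J$. Hence each Farey interval $I_{p,q,Q}$ contributes
\[
\ll R\,|I_{p,q,Q}|+1 \asymp \frac{R}{qQ}+1 .
\]
The $+1$ is harmless: by the hypothesis $Q\le\sqrt R/\log R$ we have $|I_{p,q,Q}|\gg 1/Q^2\ge (\log R)^2/R$, so $R|I_{p,q,Q}|\gg(\log R)^2$. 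The number of points is therefore $\ll R\,|I_{p,q,Q}|$ uniformly.

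\textbf{Summing over bad fractions.} The total count is $\ll R\sum |I_{p,q,Q}|$, where the sum runs over bad $p/q$ lying in $I$. It remains to show that this sum is $o(|I|)$.

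First consider the fractions with $q\le Q/\log Q$. For each such $q$, the number of $p$ with $p/q\in I$ is $\le q|I|+1$. Each interval has length $\ll 1/(qQ)$, so the contribution is
\[
\ll \sum_{q\le Q/\log Q}\frac{q|I|+1}{qQ} \ll \frac{|I|}{\log Q}+\frac{\log Q}{Q}.
\]

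Next consider $q\in[Q/\log Q,Q]$ with $p<q/\log q$. The condition $p/q\in I$ is not needed here. For each $q$ there are at most $q/\log q$ values of $p$, each with $|I_{p,q,Q}|\ll 1/(qQ)$. The contribution is therefore
\[
\ll \sum_{q\le Q}\frac{q/\log q}{qQ}\ll \frac{1}{\log Q}.
\]
Because $|I|\ge(\log\log Q)/\log Q$, this is $\le |I|/\log\log Q=o(|I|)$. Likewise $(\log Q)/Q=o(|I|)$.

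Combining the two ranges gives $\sum|I_{p,q,Q}|=o(|I|)$, so the number of points is $o(R|I|)$. This should match the stated threshold $|I|\ge(\log\log Q)/\log Q$: the second range bounds the bad slopes only globally, which is what forces $|I|$ to be larger than about $1/\log Q$, with the $\log\log Q$ factor supplying the $o(\cdot)$.

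\textbf{Main obstacle.} The step needing care is the transfer from slope intervals to lattice-point counts. The relation $\#\{\lambda\}\asymp R\cdot(\text{slope length})$ must hold uniformly for the very short intervals $I_{p,q,Q}$, whose length can be as small as $1/Q^2$. This is exactly where $Q\le\sqrt R/\log R$ is used, so that every such arc still contains many lattice points and the $O(1)$ errors from Lemma~\ref{lem:points-near-curve} are absorbed. Alternatively, one can avoid this issue by taking the union of the bad intervals and using monotonicity of the count in the arc. Bad intervals that stick out past $I$ add only $O(1/Q)$ in slope length at each end, which is negligible.
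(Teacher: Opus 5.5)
Your proposal is correct and follows essentially the same route as the paper: bound the count per Farey interval by $\ll R/(qQ)+O(1)$, use $q|I|+O(1)$ choices of $p$ in the range $q\le Q/\log Q$ and the trivial bound $q/\log q$ in the second range, and conclude via $|I|\ge(\log\log Q)/\log Q$. The only cosmetic difference is that you absorb the $O(1)$ per interval up front using $R/Q^2\ge(\log R)^2$, whereas the paper carries those terms through the sums and then bounds them with the same hypothesis $Q\le\sqrt{R}/\log R$.
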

\begin{proof}
  First note that the number of
$\lambda \in \Lambda(R,(3\pi/2, 7 \pi/4) )$
such that
  $\tan( \arg(\lambda) )\in I_{p,q,Q}$ is $\ll R/(qQ)$ provided that
  $R/(qQ) \gg 1$, which holds given that $q \le Q \le \sqrt{R}/\log R$.
 Further, the number of integer $p \le q$ such that $p/q \in I$ is $q
 |I| + O(1)$. 

 Thus,  the number of  $\lambda$ of the first type (i.e., for which
$p < q \le Q/\log Q$) is
\begin{multline*}
 \ll
 \sum_{q \le Q/\log Q}
 \sum_{p \le  q, p/q \in I}
 (\frac{R}{qQ}+O(1))
 \ll
 \sum_{q \le Q/\log Q}
 (\frac{R}{qQ}+O(1))
 (q |I| + O(1))
\\
\ll
\frac{R |I|}{\log Q}
+ \frac{Q^{2}}{\log^{2} Q} |I|
+ R\frac{\log Q}{Q} 
+ \frac{Q}{\log Q}
=
o(|I|R)
\end{multline*}

since $\frac{\log Q}{Q} = o(|I|)$ and $\frac{Q}{\log Q} = R \frac{Q}{R
\log Q} \ll R \frac{ Q}{Q^{2}\log^{2}R \log Q}= R o(|I|)$.

To bound the number of $\lambda$ of the second type (i.e., for which
$p < q/\log q$ and $ q \in [ Q/\log Q,Q]$) we first note that
the number of $p <  q/\log q$ for which $p/q \in I$ for $q$ fixed is 
$$
\ll \min(  q/\log q, q |I| + O(1))
= q/\log q
$$
for $Q$ sufficiently large since $|I| \ge \log \log(Q)/\log(Q)$
(note that $\log q \asymp \log Q$ for
$q \in [Q/\log Q,Q]$ and $Q$ large.) Thus the number of the second
type of $\lambda$ is
$$
\ll
\sum_{q \in [Q/\log Q,Q]} \frac{q}{\log q} \left( \frac{R}{qQ}+O(1) \right)
\ll
\frac{R}{\log Q} + \frac{Q^{2}}{\log Q}
= o(R |I|) 
$$
since $Q \le \sqrt{R}/\log R$, and $|I| \ge (\log \log Q)/\log
Q$.
 
\end{proof}

\subsection{$\mu$-equidistribution in very short angular intervals}
\label{sec:equid-very-short}

Given $Q$, define
$F_{Q,\text{Good}} := \{ p/q \in F_{Q} : q \in [Q/\log Q, Q], p \in
[q/\log q, q] \}$. We may then prove $\mu$-equidistribution of
$(\theta(\lambda), v(\lambda)$ for $\lambda$ ranging over sets such that 
$(\tan( \theta(\lambda)), v(\lambda))  \in I \times J \subset I_{p,q,Q}
\times J$, for $I,J$ of very small
sizes (i.e., $|I| \asymp (\log R)^{2}/R$) and 
$J \subset \partial_{L}(S)$, with $|J|$ allowed to
slowly shrink.)
\begin{prop}
  \label{prop:small-scale-mu-equidistribution}
  Let $J \subset \partial_{L} S$ be a vertical or horizontal interval,
  and let $I \subset I_{p,q,Q}$ for $p/q \in F_{Q,\text{Good}}$.
  Assume further that $|J| > \log^{4} Q / Q$, for $Q=Q(R) \le \log R$
  tending to infinity with $R$,
  and that
  $|I| \asymp \log^{2} R / R$.  Then the number of
  $\lambda \in C(R,   (3\pi/2, 7\pi/4))$ such that
  $(\theta(\lambda), v(\lambda))  \in I \times J$ is
$$
R \cdot \mu( I \times J) \cdot (1+o(1))
$$
as $R \to \infty$.
\end{prop}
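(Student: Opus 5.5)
Throughout, I read the condition $\theta(\lambda)\in I$ as a condition on $\tan\theta(\lambda)$, as the preamble to the proposition does; this costs only a bounded Jacobian since $\theta\in(0,\pi/4)$. The plan is to replace the short circular arc, on which the tangent slope lies in $I$, by a line of rational slope $p/q$, and then to apply Lemma~\ref{lem:linear-approx-equidistribution} to the entry points on the grid lines.

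\emph{Step 1: the arc and its length.} Let $\Gamma\subset C(R)$ be the arc, in the seventh octant, on which the tangent slope lies in $I$. Since the curvature is $1/R$, $\Gamma$ has arc length $\asymp R|I|\asymp \log^2 R$ and tangent angle essentially constant. By Lemma~\ref{lem:points-near-curve}, $\Gamma$ meets $\asymp \log^2 R$ squares. Its horizontal and vertical extents are $\Delta x=R|I|\cos\theta_0(1+o(1))$ and $\Delta y=R|I|\sin\theta_0(1+o(1))$, where $\theta_0=\arctan(p/q)$. The entry points $v(\lambda)$ on the vertical part of $\partial_L$ come from crossings with vertical lines $x\in\Z$, about $\Delta x$ of them. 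Those on the horizontal part come from crossings with horizontal lines, about $\Delta y$ of them.

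\emph{Step 2: linear approximation.} Parametrize $\Gamma$ as a graph $y=f(x)$ over an $x$-interval of length $2T\asymp\log^2 R$. Compare $f$ with $\alpha+\frac pq x$, where $\alpha=f(x_0)$ at the midpoint $x_0$ of that interval. Two errors arise. The curvature contributes $O(T^2/R)=O(\log^4R/R)$. The slope contributes $T\cdot|f'-p/q|\le T\,|I_{p,q,Q}|\ll T/(qQ)$. The second is too big for individual points, since $T/(qQ)$ can be as large as $\log^2R\cdot\log Q/Q^2$, which is much larger than $1$.

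The fix is to split $\Gamma$ into subarcs of horizontal length $N\asymp q\log q$, with $N\le Q\log Q\le\log R\log\log R$. On each subarc the deviation from the local line of slope $p/q$ through its midpoint is
\[
\epsilon\ll N|I_{p,q,Q}|+N^2/R\ll \frac{\log Q}{Q}+o(1).
\]
This tends to $0$ and is $o(|J|)$, because $|J|>\log^4Q/Q$. Lemma~\ref{lem:approximate-arc-by-line} then places each true crossing within $\epsilon$ (mod $1$) of the corresponding crossing of the local line. For horizontal crossings the bound is $\epsilon/\beta$ with $\beta=p/q\ge 1/\log q$, so $\epsilon/\beta\ll\log^2Q/Q=o(|J|)$ as well.

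\emph{Step 3: counting on each subarc.} Lemma~\ref{lem:linear-approx-equidistribution} applies with scale $M=q/\log q$ for the vertical-line crossings; it is valid because $|J|>\log^4Q/Q\gg \log q/q$ (recall $q\ge Q/\log Q$). For horizontal-line crossings it applies with $M=p/\log p$; here $p\ge q/\log q$ ensures both $|J|\gg \log p/p$ and a sufficiently long run of about $N p/q\gg p\log p$ crossings, after enlarging $N$ by a $\log q$ factor if needed. Counting crossings into the enlarged and shrunk intervals $J^{\pm}$ (that is, $J$ widened or narrowed by $\epsilon$) gives a count of $(\text{number of crossings})\cdot|J|\,(1+o(1))$.

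\emph{Step 4: summing and identifying $\mu$.} Summing over the $\asymp R|I|/N$ subarcs, the number of vertical entries into $J$ is $\Delta x\,|J|(1+o(1))=R|I|\cos\theta_0|J|(1+o(1))$. Likewise the number of horizontal entries is $R|I|\sin\theta_0|J|(1+o(1))$. Boundary subarcs contribute only $O(N)=o(R|I|)$. After converting $|I|$ in slope to $d\theta$, these agree with $\mu_\theta$ from \eqref{eq:dmu}, which gives the count $R\mu(I\times J)(1+o(1))$, up to the normalizing constants in $\mu$. At first glance the vertical weight $\cos$ seems to be paired with horizontal intervals; checking the definition of $\mu_\theta$ confirms that the vertical part carries $\cos\theta$, so the two match.

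\emph{Main obstacle.} The delicate point is Step 2: choosing $N$ so that the line approximation error is $o(|J|)$, uniformly over the whole interval $I_{p,q,Q}$, while keeping $N\ge q\log q$ and $N\gg p\log p\cdot q/p$, so that Lemma~\ref{lem:linear-approx-equidistribution} still applies. This is exactly why both the restriction $p/q\in F_{Q,\text{Good}}$ and the lower bound on $|J|$ are needed.
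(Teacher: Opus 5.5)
Your proposal is correct and follows the paper's proof. Both cut the arc into pieces of length about $q\log q$ (the paper uses $Q\log Q$), replace each piece by a line of slope $p/q$ via Lemma~\ref{lem:approximate-arc-by-line} (you merge the paper's two steps, arc to tangent line to rational line, into one), apply Lemma~\ref{lem:linear-approx-equidistribution} to the vertical and horizontal crossings, and sum; your bookkeeping with $J^{\pm}$ and the crossing counts is more careful than the paper's sketch. Your normalization remark is also right: the count matches the density in \eqref{eq:dmu}, while the factor $\frac{1}{\pi/4}$ in the definition of $\mu$ would put it off by $4/\pi$, since $\int_0^{\pi/4}(\sin\theta+\cos\theta)\,d\theta=1$ already.
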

\begin{proof}
  We begin by noting that Lemma~\ref{lem:approximate-arc-by-line} (we
  subdivide $I$ into intervals of length $(Q \log Q) / R$ and can then take
  $T \asymp Q \log Q$, $\epsilon \asymp (Q \log Q)^{2}/R$, and
  $\beta \gg p/q \gg 1/\log Q$) allows us to approximate
  the circular arc by a tangent line with slope
  $\beta \in I_{p,q,Q}$, inducing an error of size $O(Q^{3}/R)$ in
  terms of the intersection with lines having at least one integer coordinate.

  This tangent line, again using
  Lemma~\ref{lem:approximate-arc-by-line} (with $T = Q \log Q$ as
  before, noting that $p/q \gg 1/\log Q$ and
  $|\beta-p/q| \ll \log Q/Q^{2}$ (since $p/q \in F_{Q,good}$) we can in
  turn approximate by a line with slope $p/q \in
  F_{Q,\text{Good}}$, inducing an error of size $O(\log^{3} Q / Q)$ in
  terms of intersections with lines having at least one integer coordinate.
  By
  Lemma~\ref{lem:linear-approx-equidistribution} (with $N = Q \log Q$)
  we obtain that the boundary intersections points
  $\{ v(\lambda) \}_{\lambda \in \Lambda(R,I)}$ are
  $\mu$-equidistributed on $\partial_{L}(S)$ at scale slightly worse than
  $Q/\log^{3}Q$; say $Q/\log^{4}Q $.  
In particular, note that
  $|I|=o(1)$ implies that the density function of the projection of
  $\mu$ onto the $v$-component is, up to negligible error, constant as
  $\theta$ ranges over the interval $I$.
\end{proof}

\section{Proof of Theorem~\ref{thm:vanishing-local-correlations}}
\label{sec:proof-main-thm}

The following main technical result allows us show vanishing area
correlations for $\lambda_{i}$ ranging over lattice points such that
$\lambda_{i+k}-\lambda_{k} = \lambda$ is fixed (this corresponds to
very short angular intervals in case $k$ is large) {\em provided} that
$\tan(\arg(\lambda))$ is not well approximated by rationals of small
height. 
\begin{prop}
  \label{prop:main-technical}
  Given $\lambda \in \Lambda$ in the first octant with
  $d(\lambda,0)=k$ there exist an interval $I(\lambda)$ of length
  $|I(\lambda)| \asymp 1/k$ such that
  $\lambda_{i+k} - \lambda_{i} = \lambda$ implies that
  $\theta(\lambda_{i}) \in I(\lambda)$. If
  $\{ \tan(\theta) : \theta \in I(\lambda)\}$ is contained in some
  $I_{p,q,Q}$ for $p/q \in F_{Q,Good}$, and $Q = Q(R) \leq \min(\log R,
  \sqrt{k}/\log k$),
then
\begin{equation}
  \label{eq:one-square-cancellation}
\sum_{\lambda_{i} : \lambda_{i+k}-\lambda_{i} = \lambda}
(|B(R) \cap S_{\lambda_{i}}|-1/2)
(|B(R) \cap S_{\lambda_{i+k}}|-1/2)
= o(N_{\lambda})
\end{equation}
where
$N_{\lambda} = |\{ \lambda_{i} : \lambda_{i+k}-\lambda_{i} = \lambda
\}| \asymp R/k$.

\end{prop}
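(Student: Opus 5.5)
The plan is to reduce the sum in \eqref{eq:one-square-cancellation} to an integral against the model measure $\mu$ and then exploit the decoupling of the two areas. The first assertion follows from Lemma~\ref{lem:control-angles}, or more precisely from Lemma~\ref{lem:corner-case-negligible}. Translating by $-\lambda_i$, the condition $\lambda_{i+k}-\lambda_i=\lambda$ says exactly that $C_{\theta(\lambda_i),v(\lambda_i)}$ meets $S_\lambda$. The circles are translates of $C(R)$, so the translated circle through $v$ with tangent angle $\theta$ coincides with $C_{\theta,v}$ up to the $O(1/R)$ change of angle, which is negligible. The set of admissible $(\theta,v)$ therefore lies in $I_\lambda\times\partial_L(S)$ with $|I_\lambda|\asymp 1/|\lambda|\asymp 1/k$; we set $I(\lambda):=I_\lambda$. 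Since $\arg(\lambda_i)$ ranges over an arc of length $\asymp 1/k$, Corollary~\ref{lattice-pts-in-small-arcs} gives $N_\lambda\asymp R/k$.

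Next, apply Lemma~\ref{lem:corner-case-negligible} separately to each of the two unit intervals (horizontal and vertical) making up $\partial_L(S)$. We subdivide $I_\lambda\times J$ into $\asymp M^2$ internal boxes $B_{ij}$ of size $\asymp 1/(kM)\times 1/M$ and $O(M)$ boundary boxes, with $M\to\infty$ slowly; $M\le Q/\log^4 Q$ suffices. For each box, we count the $\lambda_i$ with $(\theta(\lambda_i),v(\lambda_i))\in B_{ij}$ using Proposition~\ref{prop:small-scale-mu-equidistribution}.

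\begin{itemize}
\item \emph{Length conditions.} The hypothesis $|J|>\log^4Q/Q$ holds by the choice of $M$. The angular side has length $1/(kM)$, which is $\gg \log^2R/R$ since $k=O(R^{1-\epsilon})$. Hence we may further chop it into pieces of length $\asymp\log^2R/R$, all lying inside the single Farey interval $I_{p,q,Q}$ with $p/q\in F_{Q,\mathrm{Good}}$ by hypothesis.
\item \emph{Counts.} Each box then receives $R\,\mu(B_{ij})(1+o(1))$ lattice points.
\item \emph{Boundary boxes.} These carry total $\mu$-mass $O(M\cdot 1/(kM^2))=O(1/(kM))$, which is $o(1/k)$. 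Since areas are bounded, they contribute $o(R/k)=o(N_\lambda)$.
\end{itemize}

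On an internal box, the lemma following Lemma~\ref{lem:control-angles} (the first-and-second-intersection stability lemma) shows that $|B(R)\cap S_{\lambda_{i+k}}|$ equals $|B_{\theta,v}\cap S_\lambda|$ at any reference point of the box, up to $o(1)$, since $|I|=o(1/k)$ and $|J|=o(1)$. The same holds trivially for $|B(R)\cap S_{\lambda_i}|$. The sum is thus $R\int (|B_{\theta,v}\cap S|-1/2)(|B_{\theta,v}\cap S_\lambda|-1/2)\,d\mu+o(R/k)$, taken over the admissible region. Now fix $v$ and integrate in $\theta$ over the admissible interval $(\theta_1(v),\theta_2(v))$, whose length is $\asymp 1/k=o(1)$.
\begin{itemize}
\item \emph{First factor.} It is $|B_{\theta_1(v),v}\cap S|-1/2+o(1)$, essentially constant in $\theta$, and the density $\sin\theta$ or $\cos\theta$ is also constant up to $1+o(1)$.
\item \emph{Second factor.} By \S\ref{sec:second-intersection-area-averages}, its $\theta$-average equals $1/2+o(1)$: replace the arc by the half-planes $H_{L_t}$, note that $dt=(1+o(1))\sin\theta_1\,d\theta$, and apply Lemma~\ref{lem:1/2}.
\end{itemize}
The inner integral is therefore $o(1/k)$ uniformly in $v$. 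Integrating over $v$ gives $o(R/k)=o(N_\lambda)$.

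I expect the main obstacle to be the uniformity in the box counting. Proposition~\ref{prop:small-scale-mu-equidistribution} must be applied simultaneously to all $\asymp M^2$ boxes, with errors that are $o$ of each box's expected count uniformly. This forces the constraint $Q\le\min(\log R,\sqrt k/\log k)$. The $\sqrt k$ bound is needed because the angular width $1/(kM)$ must contain the slope-approximation scale: by Lemma~\ref{lem:approximate-arc-by-line} the line with slope $p/q$ deviates by $T|\beta-p/q|\ll \log^3Q/Q$, which must be $o(1/M)$. We also need the $\lambda_i$ in a given box to come from a stretch of arc on which the single Farey fraction $p/q$ governs, so that periodicity modulo $q$ yields equidistribution of $v$ at scale $1/M$ within a window of $\asymp Q\log Q$ consecutive squares. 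This compatibility between $k$, $M$ and $Q$ is where the argument will need the most care.
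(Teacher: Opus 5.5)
Your proposal is correct and follows essentially the same route as the paper. Both arguments use the box decomposition of the admissible $(\theta,v)$-region from Lemma~\ref{lem:corner-case-negligible}, box-by-box counting via Proposition~\ref{prop:small-scale-mu-equidistribution}, near-constancy of the first area, and Lemma~\ref{lem:1/2} for the $\theta$-average of the second area. The only cosmetic difference is that the paper factors out the first area on each strip $v(\lambda_i)\in J$ before passing to the $\mu$-integral, whereas you do it afterwards.

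One small correction to your closing remark: the bound $Q\le\sqrt{k}/\log k$ is not what makes the box counting work, since your estimate $\log^3 Q/Q=o(1/M)$ involves only $M$ and $Q$. Its role is to give $1/k=o(1/Q^{2})$, so that the intervals $I(\lambda)$ of length $\asymp 1/k$ fit inside single Farey intervals $I_{p,q,Q}$. That fact is used when the proposition is applied in the subsequent corollaries.
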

\begin{proof}
The interval 
  $I(\lambda) := \{ \theta : C_{\theta,v} \cap S_{\lambda} \neq
  \emptyset \text{ for some $v \in \partial_{L}(S)$} \} $ has the
  property that 
  $\lambda_{i+k} - \lambda_{i} = \lambda$ implies that
  $\theta(\lambda_{i}) \in I(\lambda)$. 
  and by Lemma~\ref{lem:control-angles} we
  have $|I(\lambda)| 
  \asymp 1/k$. 
  
  Let $M = M(R,k)$ denote integers slowly growing with $R,k$, and let
  $J \subset \partial_{L}(S)$ denote an half open interval, either
  vertical or horizontal, of length $1/M$. Covering $\partial_{L}(S)$
  by $2M$ such intervals (i.e., $M$ horizontal and $M$ vertical ones), we
  claim that it suffices to show that the localized second
  intersection area sums are small in the sense that
\begin{equation}
  \label{eq:v-theta-localized-sums}
\sum_{\lambda_{i} : \lambda_{i+k}-\lambda_{i} = \lambda,
  v(\lambda_{i}) \in J}
(|B(R) \cap S_{\lambda_{i+k}}|-1/2)
= o(N_{\lambda}/M).
\end{equation}

To justify the claim, first note that the variation of the first
intersection areas $|B(R) \cap
S_{\lambda_{i}}|$, as $\lambda_{i}$ ranges over elements such that
$v(\lambda_{i}) \in J$ and $\theta(\lambda_{i}) \in
I(\lambda)$, is $o(1)$. (In fact, this holds as long as
$v(\lambda_{i})$ and 
$\theta(\lambda_{i})$ varies in intervals of length $o(1)$.) Summing
the contribution from the $2M$ such intervals $J$ we arrive at an
error term of size  $o( N_{\lambda})$ in (\ref{eq:one-square-cancellation}).

To show that \eqref{eq:v-theta-localized-sums} holds we argue as
follows: let
$I(\lambda,J) := \{ \theta : C_{\theta,v} \cap S_{\lambda} \neq
\emptyset \ \forall v \in J\}$; Lemma~\ref{lem:corner-case-negligible}
then gives $|I(\lambda,J)| \asymp 1/k$. Further, again by
Lemma~\ref{lem:corner-case-negligible} we may cover $I(\lambda,J)$ by
$M$ disjoint (half-open) same sized intervals $\{I_{j} \}_{j \le M}$,
with $|I_{j}| \asymp 1/(kM)$ and where all but $O(1)$ of these $M$
intervals are internal in the sense that
$C_{\theta,v} \cap S_{\lambda} \neq \emptyset $ for {\bf all}
$(\theta,v) \in I_j \times J$. In particular,
$\lambda_{i+k} - \lambda_{i}=\lambda$ holds if
$(\theta(\lambda_{i}), v(\lambda_{i})) \in I_{j} \times J$ and $I_{j}$
is internal.

Since there are $O(1)$ non-internal intervals $I_{j}$, each of length
$O(1/(kM))$,  $|J|=1/M$, and $\mu$-equidistribution holds at
scale $I_{j} \times J$, the corresponding contribution to
(\ref{eq:v-theta-localized-sums}) is
$O(R/(kM^{2})) = o(N_{\lambda}/M)$.  Further, again since
$\mu$-equidistribution holds at scale $I_{j} \times J$, we find that
for the internal intervals $I_{j}$, we have
\begin{align*}
\sum_{\lambda_{i} : \lambda_{i+k}-\lambda_{i} = \lambda,
  (\theta(\lambda_{i},v(\lambda_{i})) \in I_{j} \times J}
&(|B(R) \cap S_{\lambda_{i+k}}|-1/2)
=
\\&R (1+o(1)) \int_{I_{j}\times J} (|C_{\theta,v} \cap S_{\lambda}|-1/2)
d \mu(\theta,v)
\end{align*}
Summing over all $j$ 
we find that
\begin{align*}
\sum_{\lambda_{i} : \lambda_{i+k}-\lambda_{i} = \lambda, \,
  \theta(\lambda_{i}) \in  J}
&(|B(R) \cap S_{\lambda_{i+k}}|-1/2)
=
\\ &R(1+o(1)) 
\int_{I(\lambda,J) \times J} (|C_{\theta,v} \cap S_{\lambda}|-1/2)
d \mu(\theta,v)
\end{align*}
since the contribution from non internal $I_{j}$ is negligible, in the
sum as well as in the integral.
Moreover, with $X \Delta Y$ denoting the symmetric difference between
two sets $X,Y$, we have
$$
\mu( (I(\lambda,J) \times J) \Delta
\{ (v,\theta) : v \in J, C_{v,\theta} \cap S_{\lambda} \neq
\emptyset\}) =
o(\mu( I(\lambda,J) \times J) ).
$$
and thus
\begin{align}
  \label{eq:I-lambda-J-integral}
  \begin{split}
\int_{I(\lambda,J) \times J} &(|C_{\theta,v} \cap S_{\lambda}|-1/2)
d \mu(\theta,v) =\\ &
(1+o(1))
\int_{  \{ (v,\theta) : v \in J, C_{v,\theta} \cap S_{\lambda} \neq
  \emptyset\}  }
(|C_{\theta,v} \cap S_{\lambda}|-1/2)
d \mu(\theta,v)
 \end{split}
\end{align}
which, in case $J$ is a horizontal interval, equals
\begin{equation}
  \label{eq:horisontal-average}
(1+o(1)) 
\int_{\{ (\theta, x) : C_{\theta,(x,0)} \cap S_{\lambda} \neq \emptyset
  , (x,0) \in J \}}
(|C_{\theta,(x,0)} \cap S_{\lambda}|-1/2)
\sin(\theta)  \, d \theta
\, dx.
\end{equation}
Since the variation of $\sin( \theta)$ is $o(1)$ for
$\theta \in I(\lambda,J)$, we obtain, by the discussion in
Section~\ref{sec:second-intersection-area-averages}, that for $x$
fixed,
\begin{multline*}
\int_{\{\theta : C_{\theta, (x,0)} \cap S_{\lambda} \neq \emptyset\}}
(|C_{\theta,(x,0)} \cap S_{\lambda}|-1/2)
d \theta
\\
=
o 
\left(
  |\{\theta : C_{\theta, (x,0)} \cap S_{\lambda} \neq \emptyset \}|
\right)
 =o(1/k)
\end{multline*}
and thus (\ref{eq:I-lambda-J-integral}) is
$
o(1/kM)$ and consequently (\ref{eq:v-theta-localized-sums}) is
$o(R/(kM)) = o(N_{\lambda}/M)$.
The argument for $J$ a vertical interval is similar.

\end{proof}

\begin{cor}
\label{cor:correlation-vanish-good-large-intervals}  
  Let $I \subset (3\pi/2, 7\pi/4)$ be an interval such that
  $\{ \tan(\theta - 3 \pi /2) : \theta \in I\} = I_{p,q,Q}$ for
  $p/q \in F_{Q,good}$.  Provided $1/k = o(1/(qQ))$ we have
$$
\sum_{\lambda_{i} : \arg(\lambda_{i}) \in I  }
(|B(R) \cap S_{\lambda_{i}}|-1/2)
(|B(R) \cap S_{\lambda_{i+k}}|-1/2)
= o( R | I|)
$$
\end{cor}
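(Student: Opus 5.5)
The plan is to decompose the sum over $\lambda_i$ with $\arg(\lambda_i)\in I$ according to the displacement $\lambda:=\lambda_{i+k}-\lambda_i$, and then apply Proposition~\ref{prop:main-technical} to each class. By the discussion at the end of Section~\ref{sec:where-to}, every such $\lambda$ lies in the first octant with $d(\lambda,0)=k$. The sum therefore splits as
$$
\sum_{\lambda} \ \sum_{\lambda_i:\ \arg(\lambda_i)\in I,\ \lambda_{i+k}-\lambda_i=\lambda} (A(\lambda_i)-1/2)(A(\lambda_{i+k})-1/2).
$$
By Proposition~\ref{prop:main-technical}, the condition $\lambda_{i+k}-\lambda_i=\lambda$ forces $\theta(\lambda_i)\in I(\lambda)$, an interval of length $\asymp 1/k$. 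On the other hand, $\tan(\theta(\lambda_i))$ is $\tan(\arg(\lambda_i)-3\pi/2)$ up to $O(1/R)$. So the relevant $\lambda$ are those whose $I(\lambda)$ meets the range of $\theta$ corresponding to $I_{p,q,Q}$. That range has length $\asymp 1/(qQ)$, while $|I(\lambda)|\asymp 1/k=o(1/(qQ))$.

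Next I split these $\lambda$ into two types. \emph{Interior} ones have $\tan(I(\lambda))\subset I_{p,q,Q}$ entirely. \emph{Boundary} ones have $\tan(I(\lambda))$ straddling an endpoint of $I_{p,q,Q}$. For interior $\lambda$, the full class $\{\lambda_i:\lambda_{i+k}-\lambda_i=\lambda\}$ lies in the sum up to an $O(k/R)$ angular fringe, negligible by Corollary~\ref{lattice-pts-in-small-arcs}. Since $p/q\in F_{Q,\mathrm{Good}}$ and $Q\le\min(\log R,\sqrt{k}/\log k)$ is implicit in the setting, Proposition~\ref{prop:main-technical} applies and gives $o(N_\lambda)$ for each such class. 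Summing, $\sum_\lambda N_\lambda\le |\Lambda(R,I')|\ll R|I|$ with $I'$ a slight enlargement of $I$, so the interior contribution is $o(R|I|)$. To sum $o(N_\lambda)$ uniformly I need the $o(1)$ in Proposition~\ref{prop:main-technical} to be uniform in $\lambda$. I would check this by noting that it comes only from the $\mu$-equidistribution of Proposition~\ref{prop:small-scale-mu-equidistribution} and from the geometric lemmas of Section~\ref{sec:continuum-model}, all uniform for $\arg\lambda$ in the relevant range.

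The boundary $\lambda$ are handled by trivial bounds. The areas satisfy $|A-1/2|\le 1/2$. The lattice points $\lambda_i$ with $\theta(\lambda_i)$ lying in a $\theta$-interval of length $O(1/k)$ around the two endpoints number $O(R/k)$ by Corollary~\ref{lattice-pts-in-small-arcs}, since $R\cdot(1/k)\to\infty$ for $k=O(R^{1-\epsilon})$. Since $|I|\asymp 1/(qQ)$ and $1/k=o(1/(qQ))$, this count is $o(R|I|)$. The same argument absorbs the discrepancy between $\theta(\lambda_i)$ and $\arg(\lambda_i)-3\pi/2$, which is of size $O(1/R)$ in angle.

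The main obstacle is the uniformity just mentioned: Proposition~\ref{prop:main-technical} is stated for a single $\lambda$, and the sum runs over $\asymp k/(qQ)$ values of $\lambda$. If the error terms depend on $\lambda$ only through $k$, $Q$ and $M$, which I expect from the proof, the summation goes through directly. If not, I would first try redoing the argument with $J$ and $M$ fixed across all $\lambda$ and applying Proposition~\ref{prop:small-scale-mu-equidistribution} once on boxes $I_j\times J$ of a common scale.
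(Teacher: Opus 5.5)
Your proposal is correct and follows the paper's argument. The paper's proof covers $I$ by the intervals $I(\lambda)$ with $d(0,\lambda)=k$, applies Proposition~\ref{prop:main-technical} to the classes whose $I(\lambda)$ lies inside, and discards the $O(1)$ straddling classes (your $O(R/k)$ count) using $1/k=o(|I|)$; the uniformity in $\lambda$ and the hypothesis $Q\le\min(\log R,\sqrt{k}/\log k)$ that you flag are also used implicitly there, without comment.
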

\begin{proof}
  Since $|I| \asymp |I_{p,q,Q}| \asymp 1/(qQ)$ we may cover $I$ by
  intervals $I(\lambda)$, for $\lambda$ ranging over some subset of
  $\{ \lambda' \in \Lambda : d(0,\lambda')=k\}$. Further, since
  $|I_{\lambda}| \asymp 1/k$, the number of such intervals grows with
  $R$, with all but $O(1)$ them being contained in $I$. The result now
  follows from Proposition~\ref{prop:main-technical} by summing over
  the relevant set of $\lambda$'s on noting that the contribution from
  intervals $I(\lambda)$ not contained in $I$ is negligible (since there
  are only $O(1)$ such)

\end{proof}

By taking  longer intervals $I$ we can allow intervals  for which
the tangent line is well approximated by rationals of low height.

\begin{cor}
  Let $I \subset (3\pi/2, 7\pi/4)$ be an interval such that
  $|I| > (\log \log Q) / \log Q $, with
  $Q = \min(\log R, \sqrt{k}/\log k$). Then
$$
\sum_{\lambda_{i} : \arg(\lambda_{i}) \in I  }
(|B(R) \cap S_{\lambda_{i}}|-1/2)
(|B(R) \cap S_{\lambda_{i+k}}|-1/2)
= o( R | I|)
$$
\end{cor}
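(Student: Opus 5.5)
We split the range of tangent slopes over $I$ using the Farey dissection of order $Q=\min(\log R,\sqrt{k}/\log k)$. For $\theta\in I$ put $s(\theta)=\tan(\theta-3\pi/2)\in(0,1)$. Since the intervals $\{I_{p,q,Q}\}_{p/q\in F_Q\setminus\{0,1\}}$ disjointly cover $[1/(Q+1),Q/(Q+1))$, the interval $s(I)$ is covered by those $I_{p,q,Q}$ meeting it, up to two boundary pieces near $0$ and $1$. We then sort the lattice points $\lambda_i$ with $\arg(\lambda_i)\in I$ into three classes, according to which Farey interval contains the tangent slope at $\lambda_i$.

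\emph{Bad fractions.} The first class has slope in some $I_{p,q,Q}$ with $p/q\notin F_{Q,\mathrm{Good}}$; we also put here the slopes in $[0,1/(Q+1))\cup[Q/(Q+1),1]$. The fractions $p/q\notin F_{Q,\mathrm{Good}}$ are exactly those with $q<Q/\log Q$, or with $q\in[Q/\log Q,Q]$ and $p<q/\log q$. These are the two types in Lemma~\ref{lem:bad-slope-negligble}. Its hypotheses hold: $Q\le\log R\le\sqrt R/\log R$, and $|I|\asymp|s(I)|\ge(\log\log Q)/\log Q$ because $s$ has derivative $\asymp1$ on the seventh octant. So these $\lambda_i$ number $o(R|I|)$. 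The two end pieces have slope length $O(1/Q)=o(|I|)$ and hence contain $O(R/Q)=o(R|I|)$ points. Every summand is bounded by $1/4$, so this class contributes $o(R|I|)$.

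\emph{Good fractions, interior.} The second class has slope in some $I_{p,q,Q}$ with $p/q\in F_{Q,\mathrm{Good}}$ and $I_{p,q,Q}\subset s(I)$. We apply Corollary~\ref{cor:correlation-vanish-good-large-intervals} to the angular interval corresponding to each such $I_{p,q,Q}$. Its hypothesis $1/k=o(1/(qQ))$ follows from $qQ\le Q^2\le k/\log^2k=o(k)$. Each Farey interval therefore contributes $o(R|I_{p,q,Q}|)$, and these intervals are disjoint and contained in $s(I)$. The step needing care is that the $o(\cdot)$ is uniform over all good $p/q$. The errors come from Proposition~\ref{prop:small-scale-mu-equidistribution}, Lemma~\ref{lem:corner-case-negligible} and the $M$-subdivision, and each depends only on $Q,R,k$ through quantities like $1/\log Q$, $1/M$ and $Q^2/k$, not on the particular $p/q$. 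Granting this uniformity, summing gives $o(R|I|)$.

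\emph{Edge intervals.} At most two Farey intervals straddle the endpoints of $s(I)$. Each has length $\asymp1/(qQ)\le1/Q$, hence $O(R/Q)=o(R|I|)$ points, and we bound these trivially. Finally, wraparound of $j+k$ across the octant boundary, and the reduction to $k>0$ of Section~\ref{sec:lattice-points-sixth}, affect only $O(k)$ terms. This is $o(R|I|)$ because $k/R\ll R^{-\epsilon}=o(|I|)$.

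The main obstacle is the uniformity in the second class: we must check that the proof of Proposition~\ref{prop:main-technical} yields an error $o(N_\lambda)$ with a rate independent of $\lambda$ and of $p/q$. We also need the $\mu$-equidistribution at scale $I_j\times J$ to hold. This requires $|J|=1/M\gg\log^4Q/Q$, so $M$ must grow slower than $Q/\log^4Q$, and it requires $I_j$ to be compatible with the $\log^2R/R$ angular scale. We would fix $M=\lfloor\log Q\rfloor$ and verify each error term explicitly against this choice.
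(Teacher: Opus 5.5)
Your proposal is correct and follows the paper's proof: the $\lambda_i$ whose tangent slope lies in a Farey interval $I_{p,q,Q}$ with $p/q\notin F_{Q,\mathrm{Good}}$ are discarded via Lemma~\ref{lem:bad-slope-negligble}, and the remaining good Farey intervals are handled by Corollary~\ref{cor:correlation-vanish-good-large-intervals}, whose hypothesis follows from $qQ\le Q^2\le k/\log^2 k=o(k)$. Your extra bookkeeping is sound and makes explicit what the paper's two-line argument leaves implicit: the end pieces outside $[1/(Q+1),Q/(Q+1))$, the at most two Farey intervals straddling the endpoints of $s(I)$, and the uniformity of the $o(\cdot)$ over the good $p/q$.
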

\begin{proof}
  Since we assume that $|I| >  (\log \log Q) / \log Q$,
  Lemma~\ref{lem:bad-slope-negligble} implies that the contribution
  from $\lambda_{i}$ such that $\tan(\arg(\lambda_{i})) \in I_{p,q,Q}$
  for $p/q \not \in F_{Q,good}$ is $o(R|I|)$.

 Since $1/k = o(1/Q^{2})$ the result follows by
  covering the remaining part of $I$ by intervals $I'_{p,q,Q} = \{
  \theta : \tan(\theta) \in I_{p,q,Q}\}$ with $p/q \in F_{Q,good}$,
  and applying
  Corollary~\ref{cor:correlation-vanish-good-large-intervals}.

\end{proof}

Since the discussion in Section~\ref{sec:lattice-points-sixth} allows
us to reduce to intervals contained in the seventh octant,
Theorem~\ref{thm:vanishing-local-correlations} is now an immediate
consequence of the previous corollary.
\section{Local area mean}\label{sec:localmean}
For an interval \(I\subset(0,2\pi)\) 
we define the local area mean as
\[
\mean(R,I):=\frac{1}{|\Lambda(R,I)|}\sum_{\lambda\in\Lambda(R,I)}A(\lambda);
\]
recall that
$\Lambda(R,I):=\{\lambda\in\Lambda(R):\arg(\lambda)\in I\}$.  The
following two results  give an asymptotic for the local mean in
two ranges. The first is uniform in $\arg(\lambda)$, at the cost of a
larger interval, while the second is based on the Diophantine type of
$\tan(\arg(\lambda))$ and allows taking much smaller intervals ``in generic
position''.
\subsection{Uniform local mean}
We first prove the uniform result.
\begin{prop}
\label{prop:meanuni}
If
$$|I| >  \frac{(\log R)^{8}}{\sqrt{R}},$$
then, as $R \to \infty$,
    $$\mean(R,I)=\frac{1}{2}+o(1).$$
  \end{prop}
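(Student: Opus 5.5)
We reduce, as in Section~\ref{sec:lattice-points-sixth}, to $I$ contained in the seventh octant. There the tangent slope $\tan(\theta)$ lies in $(0,1)$, and each $\lambda\in\Lambda(R,I)$ is described by the pair $(\theta(\lambda),v(\lambda))$. The area $A(\lambda)$ is, up to an error $O(1/R)$ coming from the curvature, the area of $S$ lying above the line $L_{v(\lambda),\theta(\lambda)}$. Call this area $a(\theta,v)$; it is Lipschitz in $v$ and $\theta$.

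We then need an equidistribution statement for the entry points $v(\lambda)$ with respect to $\mu_\theta$ that is uniform in $\theta$. We will not use the Farey/good-slope machinery here. Instead we exploit directly that $\lambda$ ranges over an arc of length $\gg R|I|\ge (\log R)^8\sqrt R$. Cut $I$ into subarcs $I_m$ of length $\asymp H/R$, with $H$ chosen in $[\sqrt R/(\log R)^{3}, \sqrt R/\log R]$. On each $I_m$ the circle deviates from its tangent line by $O(H^2/R)$, which is $o(1)$ for our choice of $H$. By Lemma~\ref{lem:approximate-arc-by-line}, the $y$-coordinates at integer $x$ (and the $x$-coordinates at integer $y$) therefore agree mod $1$, up to $o(1)$, with those of the linear sequence $\alpha_m+\beta_m n$, $n\le H$, where $\beta_m=\tan\theta_m$.

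The averaging of the area is then handled by a key observation. For fixed $\theta$, with $v$ distributed by $\mu_\theta$, the mean of $a(\theta,v)$ is exactly $1/2$. The reason is that as $v$ runs over $\partial_L(S)$ with density $\sin\theta\,dx$ or $\cos\theta\,dy$, the line $L_{v,\theta}$ is parametrised uniformly by its signed distance $t$ over the interval $T$ of lines meeting $S$; Lemma~\ref{lem:1/2} then gives average $1/2$. So it suffices to show that, for most $m$, the points $v(\lambda)$ with $\lambda$ in $I_m$ are $\mu_{\theta_m}$-equidistributed at some scale $o(1)$. Equidistribution is needed only for the single Lipschitz test function $a(\theta_m,\cdot)$. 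Via Erd\H{o}s--Tur\'an, or directly by Koksma, this reduces to bounding $\frac1H\sum_{n\le H} e(h\beta_m n)$ for $1\le h\le h_0$, with $h_0$ slowly growing. These sums are $\ll \min(1, 1/(H\|h\beta_m\|))$. The vertical-line crossings are handled identically with slope $1/\beta_m$.

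The main obstacle is controlling the exceptional $m$, namely those for which $\|h\tan\theta_m\|\le h_0/H$ for some $h\le h_0$. This condition says that $\tan\theta_m$ lies within $h_0/(hH)$ of a rational $a/h$ of height at most $h_0$. We count the $\theta$ in $I$ for which this happens: the exceptional set has measure $\ll \sum_{h\le h_0} h\cdot h_0/(hH)\cdot(|I|+1/h)$, which is $\ll h_0^2|I|/H + h_0\log h_0/H$. Near such a rational the local mean can genuinely deviate, so these arcs must be discarded rather than treated. This is exactly where $|I|\gg (\log R)^8/\sqrt R$ enters: with $H\approx\sqrt R/\log^3R$ and $h_0=\log R$, the second term is $O((\log R)^5/\sqrt R)=o(|I|)$ and the first is $o(|I|)$. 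Since each exceptional arc contributes at most $O(1)$ per lattice point, and the number of lattice points in it is proportional to its length (Corollary~\ref{lattice-pts-in-small-arcs}), the exceptional arcs contribute $o(|\Lambda(R,I)|)$.

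On the good arcs, the mean of $A(\lambda)$ over $I_m$ is therefore $1/2+o(1)$ uniformly in $m$. Summing over $m$, weighted by $|\Lambda(R,I_m)|$, gives $\mean(R,I)=1/2+o(1)$.
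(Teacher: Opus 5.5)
Your argument is correct in outline, but it takes a different route from the paper's. The paper works with a Farey level $Q=\sqrt R/(\log R)^3$. It discards slopes lying in Farey arcs $I_{p,q,Q}$ with $p/q\notin\widehat F_{Q,\mathrm{Good}}$ (Lemma~\ref{lem:bad-slope-negligble1}). On the remaining arcs it approximates the circle twice: first by its tangent line, then by a line of rational slope $p/q$ with $q$ close to $Q$. Equidistribution of the entry points then comes from the exact periodicity of $\{\alpha+pn/q\}$ modulo $q$, and modulo $p$ for bottom entries, which is why $p\ge(\log q)^4$ is imposed (Lemma~\ref{lem:linear-approx-equidistribution}, Proposition~\ref{prop:small-scale-mu-equidistribution1}). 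Lemma~\ref{lem:1/2} then gives the average $1/2$. You skip the rational-slope approximation and use Weyl sums with Erd\H{o}s--Tur\'an/Koksma. Your exceptional set is the union of major arcs around rationals of height at most $h_0$, and you bound its measure directly. This is more economical. With more aggressive parameters ($H=o(\sqrt R)$ and $h_0\to\infty$ slowly) it also appears to need only $|I|\sqrt R\to\infty$, so it would reach the natural $R^{-1/2}$ barrier without the logarithmic losses. The paper's Farey set-up has the advantage of being the same machinery it uses for the correlation theorem.

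One step is not ``identical'' and needs an explicit cutoff. The bottom-side entries come from crossings of horizontal lines. These form the sequence $\{(j-\alpha_m)/\beta_m\}$, which has only about $H\beta_m$ terms per subarc. (You have the labels swapped: the $y$-values at integer $x$ are the vertical-line crossings, i.e.\ the left-side entries.) For this family the non-exceptional condition is $\|h/\beta_m\|>h_0/(H\beta_m)$, and the arc-to-line error from Lemma~\ref{lem:approximate-arc-by-line} is $\epsilon/\beta_m$. Both degenerate as $\theta_m\to 0$: for $\beta_m\ll 1/H$ a subarc contains only $O(1)$ such crossings. Your count of exceptional $m$ only treats $\|h\beta_m\|$. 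The repair is the one the paper makes in Proposition~\ref{prop:small-scale-mu-equidistribution1}:
\begin{itemize}
\item Fix $\delta=\delta(R)\to 0$ slowly.
\item For $\theta<\delta$, the bottom-side entries are an $O(\delta)$ fraction and can be discarded. The left-side entries, which your first family does equidistribute, average $(1-m)/2+m^2/6=1/2+O(\delta)$.
\item For $\theta\ge\delta$, the major arcs for $1/\beta$ around $a/h$, $h\le h_0$, have total measure bounded by your estimate times powers of $1/\delta$. This is still $o(|I|)$ under your choice of $H$ and $h_0$.
\end{itemize}
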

  We remark that the interval length assumption is essentially sharp:
  near the lowest part of the circle we must traverse more than
  $R^{\frac{1}{2}}$ squares to obtain equidistribution of the hitting
  points on the left side of the fundamental domain.
We start with a variation of Lemma \ref{lem:bad-slope-negligble}
for large $Q$; to do so we take 
$p \in [\log^{4} q, q]$ in our definition of ``good'' Farey
fractions in order to control the contribution from small height
intervals $I_{p,q,Q}$.
\begin{lem}
\label{lem:bad-slope-negligble1}  
  Let $I \subset [0,1]$ be a closed interval. The number of
  $\lambda \in \Lambda(R,(3\pi/2, 7 \pi/4) )$ such that
  $\tan( \arg(\lambda)-3\pi/2 )\in I_{p,q,Q}$, either for some
  integers 
  $$0 < p < q \le Q/\log Q,$$
  or 
  $$0 < p < (\log q)^{4}, \quad
q \in [Q/\log Q, Q],
  $$
 and additionally that $p/q \in I$, is
  $o(R |I|)$ provided that 
  $$Q = \frac{\sqrt{R}}{(\log R)^{3}}$$ 
  and
  $$|I| >   \frac{(\log R)^{8}}{\sqrt{R}}.$$
\end{lem}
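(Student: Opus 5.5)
We follow the proof of Lemma~\ref{lem:bad-slope-negligble} essentially verbatim, only replacing the thresholds. The starting point is the same counting bound. For $p/q \in F_Q$, the Farey interval $I_{p,q,Q}$ has length $\asymp 1/(qQ)$. Hence the number of $\lambda \in \Lambda(R,(3\pi/2,7\pi/4))$ whose tangent slope $\tan(\arg(\lambda)-3\pi/2)$ lies in $I_{p,q,Q}$ is $\ll R/(qQ) + O(1)$; this uses Corollary~\ref{lattice-pts-in-small-arcs} together with the fact that $\tan$ has derivative $\asymp 1$ on $(0,1)$. Moreover, for fixed $q$ the number of $p$ with $p/q \in I$ is $q|I| + O(1)$. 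With $Q = \sqrt{R}/(\log R)^3$ we have $R/(qQ) \ge R/Q^2 = (\log R)^6 \gg 1$, so the $O(1)$ term is always dominated, and we may simply use the bound $\ll R/(qQ)$.

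\emph{First type}, $q \le Q/\log Q$. We bound the count by
\[
\sum_{q \le Q/\log Q} \frac{R}{qQ}\,(q|I| + 1)
\ll \frac{R|I|}{\log Q} + \frac{R\log Q}{Q}.
\]
The first term is $o(R|I|)$. For the second, $\log Q/Q \asymp (\log R)^4/\sqrt{R}$, which is $o(|I|)$ because $|I| > (\log R)^8/\sqrt{R}$. So the second term is also $o(R|I|)$.

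\emph{Second type}, $p < (\log q)^4$ and $q \in [Q/\log Q, Q]$. For fixed $q$ the number of admissible $p$ is at most $\min\bigl((\log q)^4,\, q|I| + 1\bigr) \le (\log Q)^4$. The count is therefore
\[
\ll \sum_{q \in [Q/\log Q, Q]} (\log Q)^4 \frac{R}{qQ}
\ll \frac{R(\log Q)^4 \log\log Q}{Q}.
\]
This step uses $\sum_{q \in [Q/\log Q, Q]} 1/q \asymp \log\log Q$. Substituting $Q = \sqrt{R}/(\log R)^3$ gives a bound of order $\sqrt{R}\,(\log R)^7 \log\log R$. Since $R|I| > \sqrt{R}(\log R)^8$, this is $o(R|I|)$.

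The only delicate point is the bookkeeping of logarithms in the second type. The exponent $8$ in the hypothesis on $|I|$ must beat $3$ (from $Q$) plus $4$ (from the range of $p$) plus the extra $\log\log$ factor coming from the sum over $q$. Alternatively, one can use $q|I| + 1$ in place of $(\log Q)^4$ when $|I|$ is small, which gives $R|I|\log\log Q/\log Q$ plus the same tail. Either way the margin is exactly one power of $\log R$ (up to $\log\log$), so I would check carefully that the $O(1)$ term in the count per Farey interval does not cost anything. It does not, since $R/(qQ) \ge (\log R)^6$.
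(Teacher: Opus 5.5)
Your argument is correct and is the paper's route: the paper's proof simply reruns Lemma~\ref{lem:bad-slope-negligble} with the new range of $p$. Your logarithm bookkeeping is right, namely $R\log Q/Q \asymp \sqrt{R}(\log R)^4$ for the first type and $R(\log Q)^4\log\log Q/Q \asymp \sqrt{R}(\log R)^7\log\log R$ for the second, both $o(R|I|)$.

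One correction to your closing aside. In the second type, bounding the number of $p$ by $q|I|+1$ gives $\sum_{Q/\log Q\le q\le Q}\frac{R}{qQ}\,q|I|\asymp R|I|$, not $R|I|\log\log Q/\log Q$. So that alternative fails, and the restriction $p<(\log q)^4$ is exactly what makes the bound work.
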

\begin{proof}
   The proof is identical to Lemma \ref{lem:bad-slope-negligble}, with the modified range for $p$.
\end{proof}
We next allow for much smaller $p$ in the definition of
$F_{Q,\text{Good}}$ and prove a variation of Proposition
\ref{prop:small-scale-mu-equidistribution}. Given $Q$, define
$$\widehat F_{Q,\text{Good}} := \{ p/q \in F_{Q} : q \in [Q/\log Q, Q], p \in
[(\log q)^{4}, q] \}.$$
The main differences compared to Proposition
\ref{prop:small-scale-mu-equidistribution} is that we need to ensure
that $I$ contains some slope $\beta$ such that $\beta \in
I_{p,q,Q}$  
for some $\frac pq\in F_{Q,\mathrm{Good}}$.
\begin{prop}
  \label{prop:small-scale-mu-equidistribution1}
  Let $J \subset \partial_{L} S$ be a vertical or horizontal interval,
  and assume that 
$$\exists\, \beta \in I_{p,q,Q} \cap I \quad \textit{for some}\quad \frac{p}{q} \in \widehat F_{Q,\text{Good}}.$$
  Assume further 
  $$|J| > \frac{1}{\log\log R},$$
  $$Q=Q(R) =\frac{\sqrt{R}}{(\log R)^{3}},$$
   and 
  $$|I| =  \frac{1}{\sqrt{R}\log R}.$$  
  Then the number of
  $\lambda \in \Lambda(R,   (3\pi/2, 7\pi/4))$ such that
  $(\theta(\lambda), v(\lambda))  \in I \times J$ is
$$
R \cdot \mu( I \times J)+o(R|I||J|).
$$
as $R \to \infty$.
In fact, the proof gives the stronger asymptotic 
$$
R \cdot \mu( I \times J)(1+o(1))
$$
if $J$ is a vertical interval, or if
$I \subset [1/(\log \log R)^{2},\pi/4]$.
\end{prop}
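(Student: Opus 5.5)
We follow the three-step scheme of the proof of Proposition~\ref{prop:small-scale-mu-equidistribution}, with the parameters rebalanced. The new feature is that $p$ may be as small as $(\log q)^4$. Throughout we interpret $I$ as the set of tangent slopes $\tan\theta$; since $|I|=o(1)$, this changes $\mu$ only by a factor $1+o(1)$.

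\textbf{Step 1: replace the arc by its tangent line.} Over a window of $T$ consecutive columns the arc has curvature $1/R$, so Lemma~\ref{lem:approximate-arc-by-line} applies with $\epsilon\asymp T^2/R$. The arc with tangent slope in $I$ has horizontal extent $\asymp R|I| = \sqrt R/\log R$. We take $T\asymp \sqrt{R}/\log R$, so a single tangent line of slope $\beta_0\in I$ covers the whole arc with $\epsilon\asymp 1/\log^2 R$. For $x$-coordinates of intersections with horizontal lines the error is $\epsilon/\beta$. Using $\beta\ge p/q\cdot(1+o(1))$ and $p\ge(\log q)^4$, this is still $o(|J|)$ whenever $\beta\gg 1/\log R$. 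Slopes below this threshold can only occur for $p/q$ extremely small, which is the problematic regime treated in Step 4.

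\textbf{Step 2: replace slope $\beta_0$ by $p/q$.} The hypothesis gives $\beta\in I\cap I_{p,q,Q}$. Hence $|\beta_0-p/q|\le |I|+|I_{p,q,Q}|\ll 1/(\sqrt R\log R)+1/(qQ)$. Since $q\ge Q/\log Q$ and $Q=\sqrt R/(\log R)^3$, we have $1/(qQ)\ll (\log R)^7/R$. Over $T\asymp\sqrt R/\log R$ steps the accumulated drift is therefore $T|\beta_0-p/q|\ll 1/\log^2 R + (\log R)^6/\sqrt R=o(1/\log\log R)$. This is $o(|J|)$ for the $y$-coordinates. For the $x$-coordinates, dividing by $\beta\asymp p/q$ again costs a factor $q/p\le q/(\log q)^4$, which is harmless unless $p/q$ is tiny.

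\textbf{Step 3: equidistribute.} By Lemma~\ref{lem:linear-approx-equidistribution}, the line of slope $p/q$ gives $y$-values equidistributed at scale $q/\log q\gg 1/|J|$, provided the number of steps satisfies $N=T\ge q\log q$. Here $q\log q\le Q\log Q\asymp \sqrt R/(\log R)^2\ll T$, so this holds. This yields the count $R\mu(I\times J)(1+o(1))$ for vertical $J$. The density is $\cos\theta$ per unit $y$-length, and $\cos\theta$ varies by $o(1)$ across $I$. For horizontal $J$, the hitting $x$-values form the progression $\frac{q}{p}m+\alpha'$, which is equidistributed at scale $p/\log p$ provided there are at least $p\log p$ horizontal crossings. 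The number of crossings is $\asymp T\beta\asymp Tp/q$, which exceeds $p\log p$ because $T\gg q\log q$. Equidistribution at scale $p/\log p$ requires $p/\log p\gg 1/|J|$, i.e.\ $p\gg \log\log R\cdot\log\log\log R$, which is guaranteed by $p\ge(\log q)^4$. The drift in Step 2 after dividing by $\beta$ is $\ll (q/p)/\log^2R$. This is $o(|J|)$ when $\beta\gg(\log\log R)/\log^2R$, and in particular whenever $I\subset[1/(\log\log R)^2,\pi/4]$. This gives the stronger asymptotic in the stated cases.

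\textbf{Step 4 (main obstacle): horizontal $J$ with very small slope.} When $\theta$ is tiny, horizontal hits are rare: their total mass is $R|I||J|\sin\theta$. Since $\sin\theta\to 0$, this mass is $o(R|I||J|)$ as soon as $\theta=o(1)$. So for $\theta\le 1/(\log\log R)^2$ we do not prove equidistribution. Instead we bound the count trivially by the number of horizontal crossings, $\ll R|I|\sin\theta=o(R|I||J|)$, using $|J|>1/\log\log R$. The main term $R\mu(I\times J)$ is of the same negligible size, which gives the weaker additive error $o(R|I||J|)$. I expect the delicate point to be verifying that the two approximation errors in Steps 1 and 2, after division by $\beta$, stay $o(|J|)$ exactly up to the threshold $1/(\log\log R)^2$. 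This may force adjusting $T$ downward, while keeping $T\ge q\log q$, or splitting $I$ into subwindows on which a fresh tangent line is used.
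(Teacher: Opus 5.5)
Your proposal is correct and follows the paper's own argument. The shared ingredients are the cutoff at $\theta = 1/(\log\log R)^2$, the trivial bound $\ll R|I|/(\log\log R)^2 = o(R|I||J|)$ for lower-side hits at small angles (using $|J| > 1/\log\log R$), and otherwise the chain arc $\to$ tangent line $\to$ slope $p/q$ followed by Lemma~\ref{lem:linear-approx-equidistribution}, where vertical hits need only $q$ large and horizontal hits use $p \ge (\log q)^4$.

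The differences are cosmetic. You use a single tangent line over the whole arc, of length $\asymp \sqrt{R}/\log R$, rather than windows of length $Q\log Q$ as in the proof of Proposition~\ref{prop:small-scale-mu-equidistribution}. Also, your Step 3 estimate $(q/p)/\log^2 R = o(|J|)$ for $\theta \ge 1/(\log\log R)^2$ already settles the worry in your last paragraph, so no adjustment of $T$ is needed.
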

\begin{proof}
  Here we make  a cutoff at $\theta(\lambda)= 1/(\log \log R)^2$ and treat
  the part with $\theta(\lambda)$ small differently.
  First, for $\theta(\lambda) > 1/(\log \log R)^2$ we argue as in the
  proof of Proposition \ref{prop:small-scale-mu-equidistribution}. In
  the smaller range, we distinguish between the lower and left side
  intersections. The number of lower side intersections is $\ll R
  \cdot |I|/(\log \log R)^2=o(R|I||J|)$.
  For the left side intersections, we again argue as in the proof of
 Proposition \ref{prop:small-scale-mu-equidistribution}.
\end{proof}
\begin{proof}[Proof of Proposition \ref{prop:meanuni}]
By the reduction in Section~\ref{sec:lattice-points-sixth} we may assume
\[
I \subset (3\pi/2,7\pi/4), \qquad \widehat I := I - \frac{3\pi}{2} \subset (0,\pi/4).
\]
Let \( Q = \sqrt{R}/(\log R)^{3} \). By Lemma~\ref{lem:bad-slope-negligble1}, the number of
\(\lambda \in \Lambda(R,I)\) such that
$
\tan\!\left(\arg(\lambda)-\frac{3\pi}{2}\right) \in I_{p,q,Q}
$
for some \( p/q \notin \widehat{F}_{Q,\mathrm{Good}} \) is \( o(R|I|) \). Since
\( A(\lambda) \le 1 \), their contribution is negligible.
Partition \(\widehat I \) into disjoint intervals $ \widehat I_\nu$ such that 
$$ |\widehat I_\nu|=\frac{1}{\sqrt R\log R},$$
up to \(O(1)\) endpoint intervals. The total contribution from the endpoints is negligible.
If \(\widehat I_\nu\) contains no good slope, which means
$$\forall\, \frac{p}{q} \in \widehat F_{Q,\text{Good}} \,\,\nexists\, \beta \in I_{p,q,Q} \cap \widehat I_\nu ,$$
then its contribution is negligible by Lemma
\ref{lem:bad-slope-negligble1} . Thus it suffices to consider those
intervals $\widehat I_\nu$ containing some good slope, i.e., that
$$\exists\, \beta \in I_{p,q,Q} \cap \widehat I_\nu\quad  \textrm{for some} \quad  \frac{p}{q} \in \widehat F_{Q,\text{Good}}.$$

Fix such an interval \(\widehat I_\nu\). By Proposition \ref{prop:small-scale-mu-equidistribution1}, for any interval \(K\subset \partial_L(S)\) with
\[
|K|>\frac1{\log \log R},
\]
we have
\[
\#\{\lambda:(\theta(\lambda),v(\lambda))\in \widehat I_\nu\times K\}
=
R\,\mu(\widehat I_\nu\times K)+o(R|\widehat I_\nu||K|).
\]

Summing over a partition of \(\partial_L(S)\) into such intervals \(K\), we find that
\[
\sum_{\substack{\lambda\in\Lambda(R,I)\\ \theta(\lambda)\in \widehat I_\nu}}
A(\lambda)
=
R
\int_{\widehat I_\nu\times\partial_L(S)}
|B_{\theta,v}\cap S|\,d\mu+o(R|\widehat I_\nu|),
\]
and similarly for the normalization factor
\[
\sum_{\substack{\lambda\in\Lambda(R,I)\\ \theta(\lambda)\in \widehat I_\nu}}
1
=
R
\int_{\widehat I_\nu\times\partial_L(S)}
d\mu+o(R|\widehat I_\nu|).
\]

By Lemma ~\ref{lem:1/2},
\[
\int_{\widehat I_\nu\times\partial_L(S)}
|B_{\theta,v}\cap S|\,d\mu
=
\frac12
\int_{\widehat I_\nu\times\partial_L(S)} 1 \,
d\mu.
\]

Hence
\[
\sum_{\substack{\lambda\in\Lambda(R,I)\\ \theta(\lambda)\in \widehat I_\nu}}
A(\lambda)
=
\frac12
\#\{\lambda\in\Lambda(R,I):\theta(\lambda)\in \widehat I_\nu\}+o(R|\widehat I_\nu|).
\]
Summing over all good intervals and adding back the negligible bad and endpoint contributions, we find that
\[
\sum_{\lambda\in\Lambda(R,I)} A(\lambda)
=
\frac12 |\Lambda(R,I)| + o(R|I|).
\]

Since $|\Lambda(R,I)|\asymp R|I|, $
the desired result follows.
\end{proof}

\subsection{Local mean in  good Farey intervals}
We now prove a local mean result based on the Diophantine type of
$\tan(\arg(\lambda))$. As in Section~\ref{sec:lattice-points-sixth},
we focus on the seventh octant.
\begin{prop}
Let
\[
I=I(R)\subset (3\pi/2,7\pi/4)
\]
be an interval such that
\[
\tan\!\left(I-\frac{3\pi}{2}\right)\subset I_{p,q,Q}
\]
for some $p/q\in F_{Q,\mathrm{Good}}$, with $Q=Q(R)\to\infty$, $Q\le \log R$, and
\[
|I|\asymp \frac{\log^2 R}{R}.
\]
Then
\[
\mean(R,I)=\frac12+o(1).
\]
\end{prop}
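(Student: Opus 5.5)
The plan is to follow the proof of Proposition~\ref{prop:meanuni}, but to use Proposition~\ref{prop:small-scale-mu-equidistribution} in place of Proposition~\ref{prop:small-scale-mu-equidistribution1}. Its hypotheses match the present ones exactly: $p/q\in F_{Q,\mathrm{Good}}$, $Q\to\infty$, $Q\le\log R$, and $|I|\asymp \log^2R/R$. Write $\widehat I := I-3\pi/2$. The lattice points $\lambda\in\Lambda(R,I)$ are then exactly those with $\theta(\lambda)$ in an interval $I^*$ that differs from $\widehat I$ by $O(1/R)$. This is because the tangent angle at the entry point of $S_\lambda$ and $\arg(\lambda)-3\pi/2$ differ by $O(1/R)$, as the square has diameter $O(1)$ and the curvature is $1/R$. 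Since $|I|R\asymp \log^2 R\to\infty$, the $O(1)$ lattice points affected by this discrepancy are negligible. The same observation shows that $|\Lambda(R,I)|\asymp R|I|$, by Corollary~\ref{lattice-pts-in-small-arcs}.

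Next, choose $M=M(R)\to\infty$ slowly, for instance $M\le Q/\log^4 Q$, so that intervals of length $1/M$ satisfy $|J|>\log^4Q/Q$. Partition $\partial_L(S)$ into $M$ horizontal and $M$ vertical intervals $J_m$ of length $1/M$. For each of them, Proposition~\ref{prop:small-scale-mu-equidistribution} gives
\[
\#\{\lambda:(\theta(\lambda),v(\lambda))\in I^*\times J_m\}=R\,\mu(I^*\times J_m)(1+o(1)).
\]
Because $|I^*|=o(1)$ and $|J_m|=1/M=o(1)$, the area $A(\lambda)=|B_{\theta,v}\cap S|$ varies by only $o(1)$ over each box $I^*\times J_m$. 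This uses the smooth dependence of the circle's intersection with $S$ on $(\theta,v)$, as in the lemma after Lemma~\ref{lem:control-angles}; the curvature term contributes $O(1/R)$ inside $S$. Summing over $m$ then gives
\[
\sum_{\lambda\in\Lambda(R,I)}A(\lambda)=R\int_{I^*\times\partial_L(S)}|B_{\theta,v}\cap S|\,d\mu+o(R|I|),
\qquad
|\Lambda(R,I)|=R\,\mu(I^*\times\partial_L(S))(1+o(1)).
\]

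It remains to show that $\int |B_{\theta,v}\cap S|\,d\mu=\tfrac12\int d\mu$ for each fixed $\theta$. Up to $O(1/R)$ the circle is the line of slope $\tan\theta$ through $v$. With $\theta$ fixed, the measure $\sin\theta\,dx$ on the bottom side and $\cos\theta\,dy$ on the left side pushes forward to Lebesgue measure on the perpendicular offset $t$ of the line. Indeed, a point on the bottom side at horizontal position $x$ lies at perpendicular distance $x\sin\theta$ from the reference line, and similarly for the left side with $\cos\theta$. As $v$ ranges over $\partial_L(S)$, the offset $t$ sweeps exactly the interval $T$ of lines meeting $S$, once each. Lemma~\ref{lem:1/2} then gives an average area of exactly $1/2$. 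Dividing the two displays and using $R\mu(I^*\times\partial_L S)\asymp R|I|$ yields $\mean(R,I)=\tfrac12+o(1)$.

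The main obstacle is the uniformity of the equidistribution step. Proposition~\ref{prop:small-scale-mu-equidistribution} is stated with a $(1+o(1))$ factor for each fixed $J$, and we need it uniformly over the $2M$ intervals $J_m$ with $M$ growing. The plan is to extract from its proof the explicit rate: Lemma~\ref{lem:linear-approx-equidistribution} gives $\epsilon=O(1/\log q)$ uniformly in the interval. That rate is compatible with $|J_m|=1/M$ as long as $M\le Q/\log^4Q$. A secondary point is to check the pushforward computation in the third paragraph at the corner of $\partial_L(S)$, where the parametrization switches from the bottom side to the left side, but this is only a measure-zero issue.
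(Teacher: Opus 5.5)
Your proposal is correct and follows the paper's route. The paper's proof simply says to repeat the argument of Proposition~\ref{prop:meanuni} using Lemma~\ref{lem:bad-slope-negligble}, Proposition~\ref{prop:small-scale-mu-equidistribution} and Lemma~\ref{lem:1/2}, which is what you do, with the uniformity over the $J_m$ and the pushforward to the perpendicular offset written out. You are right to drop Lemma~\ref{lem:bad-slope-negligble}: here all of $\tan(I-3\pi/2)$ lies in a single good Farey interval, and in any case that lemma's hypothesis $|I|\ge(\log\log Q)/\log Q$ fails at the scale $|I|\asymp\log^2R/R$.
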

\begin{proof}
    The result follows, as in the proof of Proposition \ref{prop:meanuni}, using Lemma \ref{lem:bad-slope-negligble}, Proposition \ref{prop:small-scale-mu-equidistribution} and Lemma \ref{lem:1/2}.
\end{proof}
\section{Local and global area variance}

Given an interval \(I\subset(0,2\pi)\) we  define the local variance
\begin{equation}
  \label{eq:local-variance-def}
\mathrm{Var}(R,I):=\frac{1}{|\Lambda(R,I)|}\sum_{\lambda\in\Lambda(R,I)}
\bigl(A(\lambda)-\mean(R,I)\bigr)^2.
\end{equation}
As in Section~\ref{sec:lattice-points-sixth}, after subdividing into
octants and using the symmetries of \(\mathbb Z^2\), we may restrict
attention to intervals contained in the seventh octant. 

\begin{lem}\label{lem:local-average-variance}
Let  \(I\subset(3\pi/2,7\pi/4)\) and assume that 
  $$|I| >  \frac{(\log R)^{8}}{\sqrt{R}}$$
and put
$
\hat{I}:=I-\frac{3\pi}{2}\subset(0,\pi/4).
$
\fixmehide{repetition M. removed the full line}
Then
for every bounded continuous function 
\[
F:(0,\pi/4)\times\partial_L(S)\to\mathbb R
\]
we have
\[
\frac{1}{|\Lambda(R,I)|}\sum_{\lambda\in\Lambda(R,I)}F(\theta(\lambda),v(\lambda))
=
\frac{\displaystyle\int_{\hat{I}\times\partial_L(S)}F(\theta,v)\,d\mu(\theta,v)}
{\displaystyle\int_{\hat{I}\times\partial_L(S)}d\mu(\theta,v)}
+o(1),
\]
where
\[
d\mu(\theta,v)=
\begin{cases}
\sin\theta\,d\theta\,dx, & v=(x,0),\\[1ex]
\cos\theta\,d\theta\,dy, & v=(0,y).
\end{cases}
\]
\end{lem}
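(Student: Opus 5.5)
The plan is to upgrade the box-counting equidistribution of Proposition~\ref{prop:small-scale-mu-equidistribution1} to arbitrary bounded continuous test functions. We follow the same decomposition used in the proof of Proposition~\ref{prop:meanuni}. Two reductions come first.

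\emph{Bad slopes.} Take $Q=\sqrt R/(\log R)^3$. By Lemma~\ref{lem:bad-slope-negligble1}, the lattice points whose tangent slope lies in some $I_{p,q,Q}$ with $p/q\notin\widehat F_{Q,\mathrm{Good}}$ number $o(R|I|)$. Since $F$ is bounded and $|\Lambda(R,I)|\asymp R|I|$ (Corollary~\ref{lattice-pts-in-small-arcs}), they change the left-hand side by $o(1)$.

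\emph{Partition of $\hat I$.} Partition $\hat I$ into intervals $\widehat I_\nu$ of length $1/(\sqrt R\log R)$. The $O(1)$ endpoint pieces and the pieces containing no good slope are again negligible, so only the good pieces remain.

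\textbf{Step 1 (indicator functions).} On each good piece $\widehat I_\nu$, Proposition~\ref{prop:small-scale-mu-equidistribution1} gives, for every interval $K\subset\partial_L(S)$ with $|K|>1/\log\log R$,
\[
\#\{\lambda:(\theta(\lambda),v(\lambda))\in\widehat I_\nu\times K\}
=R\,\mu(\widehat I_\nu\times K)+o(R|\widehat I_\nu||K|).
\]
Summing over $\nu$, this yields the same asymptotic for every box $I'\times K$ with $I'\subset\hat I$ a union of the $\widehat I_\nu$ and $K$ as above. Applied with $K=\partial_L(S)$, it also gives the normalisation
\[
|\Lambda(R,I)|=R\,\mu(\hat I\times\partial_L(S))+o(R|I|).
\]
Dividing the two, the claimed identity holds for indicators of such boxes, with error $o(|I'||K|/|I|)$ relative to the normalised quantities.

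\textbf{Step 2 (continuous $F$).} This is the main obstacle. The difficulty is that $\hat I$ may shrink (down to about $(\log R)^8/\sqrt R$), so $F$ is not uniformly continuous on a fixed compact set in the relevant sense, and $\theta$ may approach $0$.

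We split $F(\theta,v)$ by the variables. Because $|\hat I|$ may tend to $0$ or stay bounded below, we take a partition of $\hat I$ into $m$ equal intervals $I'_a$ (unions of the $\widehat I_\nu$), and of each of the two legs of $\partial_L(S)$ into $m$ intervals $K_b$, with $m=m(R)\to\infty$ slowly (so $|K_b|>1/\log\log R$). We then approximate $F$ by the step function taking, on each box $I'_a\times K_b$, the value of $F$ at a corner point.

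Since $\theta$ ranges over the compact set $\overline{\hat I}\subset[0,\pi/4]$ and $v$ over $\partial_L(S)$, there is a subtlety: $F$ is only continuous on the open set $(0,\pi/4)\times\partial_L(S)$. So we handle $\theta$ near $0$ separately. Choose $\delta>0$. Both the lattice count and the $\mu$-mass of $\{\theta<\eta\}$ are small relative to $|\hat I|$ when $\hat I$ reaches near $0$; here we use the bound from Proposition~\ref{prop:small-scale-mu-equidistribution1} on lower-side intersections, together with the fact that vertical intersections carry density $\cos\theta$. On the remaining compact region $F$ is uniformly continuous, so the step approximation is within $\delta$ pointwise once the mesh is small. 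For a fixed $\delta$ and a fixed fine mesh, Step 1 applies to each of the finitely many boxes, so both sides differ by at most $2\delta+o(1)$. Finally let $\delta\to0$. If $\hat I$ shrinks, boxes only shrink in $\theta$, and the constant-mesh argument in $\theta$ still applies, since Step 1 allows any union of the $\widehat I_\nu$.

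\textbf{Step 3 (conclusion).} The denominators on both sides agree up to a factor $1+o(1)$. Combining Steps 1 and 2 with the negligible bad and endpoint contributions gives the stated identity.
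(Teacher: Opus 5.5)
Your route is the paper's: discard bad slopes with Lemma~\ref{lem:bad-slope-negligble1}, cut $\hat I$ into pieces of length $1/(\sqrt R\log R)$, apply Proposition~\ref{prop:small-scale-mu-equidistribution1} on admissible boxes, compare Riemann sums of a step approximation of $F$, and take $F\equiv 1$ for the denominator. The gap is in your Step~2 treatment of small $\theta$.

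\textbf{The small-$\theta$ claim is false for short $\hat I$.} You assert that $\{\theta<\eta\}$ has lattice count and $\mu$-mass small relative to $|\hat I|$. This fails whenever $\hat I\subset(0,\eta)$, which the hypothesis allows, e.g.\ $\hat I=(0,h_R)$ with $h_R=2(\log R)^8/\sqrt R$. There the horizontal leg is negligible (density $\sin\theta$), but the vertical leg has density $\cos\theta\approx 1$ and carries essentially all of $\Lambda(R,I)$. Your fallback for shrinking $\hat I$ (``the constant-mesh argument in $\theta$ still applies'') needs the oscillation of $F$ on the boxes $I'_a\times K_b$ to be small. That means uniform continuity of $F$ on $\hat I\times\partial_L(S)$, which a bounded continuous $F$ on the open set $(0,\pi/4)\times\partial_L(S)$ need not have near $\theta=0$ (or near $\theta=\pi/4$).

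\textbf{No argument can fix this: the statement fails in that regime.} Choose $R_j\to\infty$ so that $h_j:=2(\log R_j)^8/\sqrt{R_j}$ satisfies $h_{j+1}<h_j/8$. Let $F=\sum_j F_j$, where each $F_j$ is continuous with $0\le F_j\le 1$, and
\begin{itemize}
\item $F_j=1$ at each of the $\asymp R_jh_j$ points $(\theta(\lambda),v(\lambda))$ with $\lambda\in\Lambda(R_j,(3\pi/2,3\pi/2+h_j))$ and $\theta(\lambda)\in(h_j/2,h_j)$;
\item $F_j$ is supported in $\{h_j/4<\theta<2h_j\}$ with $\mu$-mass at most $h_j/j$.
\end{itemize}
Then $F$ is bounded and continuous on $(0,\pi/4)\times\partial_L(S)$. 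Yet at $R=R_j$, with $I=(3\pi/2,3\pi/2+h_j)$, the left-hand side is at least $1/2-o(1)$ while the right-hand side is $O(1/j)$.

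\textbf{What is missing is a hypothesis.} Either assume $F$ is uniformly continuous (equivalently, extends continuously to $[0,\pi/4]\times\overline{\partial_L(S)}$), or assume $|\hat I|$ is bounded below.
\begin{itemize}
\item Under uniform continuity, Step~2 needs no special treatment of small $\theta$.
\item Under $|\hat I|$ bounded below, your small-mass argument is valid. But you must apply it also near $\theta=\pi/4$ and near the missing endpoints $(1,0),(0,1)$ of $\partial_L(S)$, since the region you call compact is not.
\end{itemize}
The paper's sketch tacitly assumes the same uniform continuity when it replaces $F$ on each shrinking box by its value at a representative point. The function actually used later, $F=(|B_{\theta,v}\cap S|-1/2)^2$, does satisfy it by \eqref{eq:A}.

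\textbf{A smaller bookkeeping point.} The discarded pieces are controlled only in aggregate, as $o(R|I|)$. So Step~1 does not give the asymptotic for each individual box $I'\times K$. What you need, and do have, is that the sum over the partition of $|\#\{\cdot\}-R\mu(\cdot)|$ is $o(R|I|)$; this suffices because $F$ is bounded.
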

\begin{proof}
  We give a sketch based on the proof of Proposition
  \ref{prop:meanuni}. We partition \(\hat{I}\times\partial_L(S)\) into
  admissible shrinking boxes \(Q=I\times J\) (i.e., so we may apply
  Proposition~\ref{prop:small-scale-mu-equidistribution1}), and
  replace \(F\) on \(Q\) by its value at a representative point. The
  counting measure on lattice points pushes forward to \(\mu\),
  uniformly over the partition, and the resulting Riemann sums
  converge to the corresponding \(\mu\)-integral.  Applying the same
  argument to \(F\equiv1\) gives the denominator.
\end{proof}

\begin{lem}\label{lem:fixed-theta-variance}
For \(0<\theta<\pi/4\), define
\[
g(\theta):=\frac{\tan^3\theta-5\tan^2\theta+15\tan\theta+5}{60(1+\tan\theta)}.
\]
Then
\[
\int_{\partial_L(S)}
\Bigl(|B_{\theta,v}\cap S|-\frac12\Bigr)^2\,d\mu_\theta(v)
=
(\sin\theta+\cos\theta)\,g(\theta).
\]
\end{lem}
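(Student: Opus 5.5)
The plan is to compute the left-hand side directly, after two reductions. First, the circle $C_{\theta,v}$ has radius $R$ and meets $S$ in an arc of length $O(1)$. So $|B_{\theta,v}\cap S|$ differs from the area of $S\cap H$ by $O(1/R)$, uniformly in $v$, where $H$ is the half-plane above the tangent line $L_{v,\theta}$ (the side containing the centre). In the $R\to\infty$ model, which is the regime in which the lemma is used via Lemma~\ref{lem:local-average-variance}, we therefore replace $B_{\theta,v}$ by this half-plane. The identity is then exact for the line model, and holds up to $O(1/R)$ for the circles.

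Second, we reparametrize $\partial_L(S)$ by the intercept. Put $t=\tan\theta\in(0,1)$ and write every line of slope $t$ as $y=tX+c$.
\begin{itemize}
\item A horizontal point $v=(x,0)$ gives $c=-tx\in(-t,0]$. The weight $\sin\theta\,dx$ becomes $\sin\theta\,dc/t=\cos\theta\,dc$.
\item A vertical point $v=(0,y)$ gives $c=y\in[0,1)$, with weight $\cos\theta\,dc$.
\end{itemize}
Hence $\mu_\theta$ pushes forward to $\cos\theta$ times Lebesgue measure on $c\in(-t,1)$. Since $\sin\theta+\cos\theta=\cos\theta\,(1+t)$, the lemma reduces to the identity
\[
\int_{-t}^{1}\Bigl(A(c)-\tfrac12\Bigr)^2\,dc=\frac{t^3-5t^2+15t+5}{60},
\]
where $A(c)$ is the area of $S$ above the line $y=tX+c$.

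Next we compute $A(c)$ piecewise, using $t<1$:
\begin{itemize}
\item for $c\in[-t,0]$ the line cuts the bottom and right sides, and $A=1-(t+c)^2/(2t)$;
\item for $c\in[0,1-t]$ it cuts the left and right sides, and $A=1-c-t/2$;
\item for $c\in[1-t,1]$ it cuts the left and top sides, and $A=(1-c)^2/(2t)$.
\end{itemize}
As in the proof of Lemma~\ref{lem:1/2}, reflecting through the centre of $S$ shows that $A(c)-\tfrac12$ is odd about $c_0=(1-t)/2$. So the squared integrand is even about $c_0$, and it suffices to double the integrals over the upper halves of the middle piece and of the top corner piece.

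The middle piece is $2\int_0^{(1-t)/2}u^2\,du=(1-t)^3/12$, using $A-\tfrac12=-u$ with $u=c-c_0$. For the corner piece set $s=1-c\in[0,t]$:
\[
2\int_0^t\Bigl(\frac{s^2}{2t}-\frac12\Bigr)^2ds=\frac{t^3}{10}-\frac{t^2}{3}+\frac t2 .
\]
Adding the two gives $\tfrac1{12}+\tfrac t4-\tfrac{t^2}{12}+\tfrac{t^3}{60}=(t^3-5t^2+15t+5)/60$, as required.

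The only genuinely delicate point is the first reduction: replacing the circle by its tangent line, and checking that the piecewise formulas stay valid uniformly near the corner values $c=-t,0,1-t,1$. Both follow from $|S\cap(B_{\theta,v}\,\Delta\, H)|=O(1/R)$. The rest is an elementary computation, which I would double-check by verifying the $t\to0$ case against $\int_0^1(c-\tfrac12)^2\,dc=\tfrac1{12}$.
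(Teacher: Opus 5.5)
Your proposal is correct and follows essentially the same route as the paper. Both push $\mu_\theta$ forward to the intercept variable, obtaining $\cos\theta\,dc$ on $(-t,1)$, then use the same three-piece formula for the area above the line and integrate directly to get $(t^3-5t^2+15t+5)/60$. Your use of the central-reflection symmetry to halve the computation, and your explicit remark that replacing $B_{\theta,v}$ by the half-plane costs $O(1/R)$ (which the paper leaves implicit by working in the line model), are minor refinements rather than a different argument.
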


\begin{proof}
  We need an explicit expression for the area in the fundamental
  domain above a specific line.  Fix \(0<\theta<\pi/4\), and write
  \(m=\tan\theta\). A line of slope \(m\) is written $ y=mx+b.  $ The
  pushforward of \(d\mu_\theta\) to the $y$-intercept parameter $b$ is
\[
d\mu_\theta\mapsto \cos\theta\,db
\qquad\text{on }[-m,1].
\]

Let \(A(b,\theta)\) denote the area of the part of \(S=[0,1]^2\) lying above the line \(y=mx+b\).
Then
\begin{align}
\label{eq:A}
\begin{split}
A(b,\theta)=
\begin{cases}
1-\dfrac{(m+b)^2}{2m}, & -m\le b\le 0,\\[2ex]
1-b-\dfrac m2, & 0\le b\le 1-m,\\[2ex]
\dfrac{(1-b)^2}{2m}, & 1-m\le b\le 1.
\end{cases}
\end{split}
\end{align}
Therefore
\[
\int_{\partial_L(S)}
\Bigl(|B_{\theta,v}\cap S|-\frac12\Bigr)^2\,d\mu_\theta(v)
=
\cos\theta\int_{-m}^{1}\Bigl(A(b,\theta)-\frac12\Bigr)^2\,db.
\]
A direct computation on the three intervals gives
\[
\int_{-m}^{1}\Bigl(A(b,\theta)-\frac12\Bigr)^2\,db
=
\frac{m^3-5m^2+15m+5}{60}.
\]
\end{proof}

\begin{prop}[Local Variance]\label{thm:local-variance}
Let 
  $$|I| >  \frac{(\log R)^{8}}{\sqrt{R}}.$$
Assume that \(I\subset(3\pi/2,7\pi/4)\) and write
$
\hat{I}:=I-\frac{3\pi}{2}\subset(0,\pi/4).
$
Then
\[
 \mathrm{Var}(R,I)
=
\frac{\displaystyle\int_{\hat{I}}g(\theta)(\sin\theta+\cos\theta)\,d\theta}
{\displaystyle\int_{\hat{I}}(\sin\theta+\cos\theta)\,d\theta}
+o(1).
\]
If in addition $|\hat{I}|\to 0$ and $\theta_0\in \hat{I}$, then
\[
\mathrm{Var}(R,I)\to g(\theta_0).
\]
\end{prop}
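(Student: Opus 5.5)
The plan is to write the local variance as an average of a bounded function of $(\theta(\lambda),v(\lambda))$ and then apply Lemma~\ref{lem:local-average-variance} together with Lemma~\ref{lem:fixed-theta-variance}.

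\textbf{Step 1 (recentring at $1/2$).} Expanding the square,
\[
\mathrm{Var}(R,I)=\frac{1}{|\Lambda(R,I)|}\sum_{\lambda\in\Lambda(R,I)}\Bigl(A(\lambda)-\tfrac12\Bigr)^2-\Bigl(\mean(R,I)-\tfrac12\Bigr)^2 .
\]
Since $|I|>(\log R)^8/\sqrt R$, Proposition~\ref{prop:meanuni} gives $\mean(R,I)=\tfrac12+o(1)$. Hence the second term is $o(1)$, and it suffices to study the average of $(A(\lambda)-\tfrac12)^2$.

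\textbf{Step 2 (areas as a function of the hitting data).} For $\lambda$ in the seventh octant, the arc $C(R)\cap S_\lambda$ after translation by $-\lambda$ is the arc of $C_{\theta(\lambda),v(\lambda)}$. Its curvature is $1/R$ and its length is $O(1)$, so it deviates from the tangent line $L_{v,\theta}$ by $O(1/R)$ inside $S$. Therefore
\[
A(\lambda)=|B_{\theta(\lambda),v(\lambda)}\cap S|=A^{\mathrm{lin}}(\theta(\lambda),v(\lambda))+O(1/R),
\]
where $A^{\mathrm{lin}}(\theta,v)$ is the area of $S$ on the centre side of $L_{v,\theta}$. Using the parametrisation $b$ of Lemma~\ref{lem:fixed-theta-variance}, this is exactly the function $A(b,\theta)$ of \eqref{eq:A}. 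Set $F(\theta,v):=(A^{\mathrm{lin}}(\theta,v)-\tfrac12)^2$. It is bounded and continuous in $\theta$, and continuous in $v$ on each leg of $\partial_L(S)$. It may jump at the corner $v=(0,0)$ or where the two legs meet the ends, but such a jump sits on a $\mu$-null set. So the Riemann-sum argument behind Lemma~\ref{lem:local-average-variance} still applies. Alternatively one can approximate $F$ above and below by continuous functions, which is how I would make this rigorous.

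\textbf{Step 3 (apply the equidistribution lemma).} Lemma~\ref{lem:local-average-variance} gives
\[
\frac{1}{|\Lambda(R,I)|}\sum_{\lambda\in\Lambda(R,I)}F(\theta(\lambda),v(\lambda))=\frac{\int_{\hat I}\int_{\partial_L(S)}F\,d\mu_\theta\,d\theta}{\int_{\hat I}\mu_\theta(\partial_L(S))\,d\theta}+o(1).
\]
We have $\mu_\theta(\partial_L(S))=\sin\theta+\cos\theta$, and by Lemma~\ref{lem:fixed-theta-variance} the inner numerator integral equals $(\sin\theta+\cos\theta)g(\theta)$. This yields the first formula.

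\textbf{Step 4 (shrinking intervals).} If $|\hat I|\to0$ with $\theta_0\in\hat I$, then $g$ and $\sin+\cos$ are continuous on $[0,\pi/4]$, and $\sin+\cos$ is bounded below there. So the ratio of integrals tends to $g(\theta_0)$. For this limit to make sense along a sequence, I interpret $\theta_0$ as fixed, or else read the statement as $\mathrm{Var}(R,I)-g(\theta_0)\to0$, which follows from uniform continuity of $g$.

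\textbf{Main obstacle.} The delicate step is the uniformity in Lemma~\ref{lem:local-average-variance} near $\theta\approx0$. There, Proposition~\ref{prop:small-scale-mu-equidistribution1} only gives an additive error $o(R|I||J|)$ on the horizontal leg. This is harmless here because $F$ is bounded and the error is additive, and the normalisation $|\Lambda(R,I)|\asymp R|I|$ absorbs it. I would check carefully that summing these errors over the partition boxes stays $o(R|I|)$.
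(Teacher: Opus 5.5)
Your proposal is correct and follows the paper's proof essentially step for step: recentre at $1/2$ using Proposition~\ref{prop:meanuni}, apply Lemma~\ref{lem:local-average-variance} to $F=(|B_{\theta,v}\cap S|-\tfrac12)^2$, integrate in $v$ first via Lemma~\ref{lem:fixed-theta-variance}, and get the shrinking-interval case from continuity of $g$. Your tangent-line replacement (error $O(1/R)$) is a useful refinement, since it makes $F$ independent of $R$; also, no jump actually occurs at the corner of $\partial_L(S)$, because both legs give intercept $b\to 0$ there, so $F$ is genuinely continuous.
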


\begin{proof}
  By (\ref{eq:local-variance-def}) and the local mean result
  (cf. Proposition~\ref{prop:meanuni}), we have
\[
\mathrm{Var}(R,I)
=
\frac{1}{|\Lambda(R,I)|}
\sum_{\lambda\in\Lambda(R,I)}
\Bigl(A(\lambda)-\frac12\Bigr)^2
+o(1).
\]
Lemma~\ref{lem:local-average-variance} then yields
\[
\mathrm{Var}(R,I)
=
\frac{\displaystyle\int_{J_R\times\partial_L(S)}
\Bigl(|B_{\theta,v}\cap S|-\frac12\Bigr)^2\,d\mu(\theta,v)}
{\displaystyle\int_{J_R\times\partial_L(S)}d\mu(\theta,v)}
+o(1).
\]
Now integrate first in \(v\). By Lemma~\ref{lem:fixed-theta-variance},
\[
\int_{\partial_L(S)}
\Bigl(|B_{\theta,v}\cap S|-\frac12\Bigr)^2\,d\mu_\theta(v)
=
(\sin\theta+\cos\theta)\,g(\theta).
\]
Therefore
\[
\int_{J_R\times\partial_L(S)}
\Bigl(|B_{\theta,v}\cap S|-\frac12\Bigr)^2\,d\mu(\theta,v)
=
\int_{J_R}g(\theta)(\sin\theta+\cos\theta)\,d\theta,
\]
while
\[
\int_{J_R\times\partial_L(S)}   1 \, d\mu(\theta,v)
=
\int_{J_R}(\sin\theta+\cos\theta)\,d\theta.
\]
Substituting these identities proves the first claim. The second follows by continuity of \(g\).
\end{proof}

\begin{cor}[Global Variance]\label{cor:global-variance}
We have
\[
\mathrm{Var}(R,(3\pi/2,7\pi/4))\to c_{\mathrm{glob}},
\]
where
\[
c_{\mathrm{glob}}
=
\frac{13}{60}-\frac{1}{12}\log(1+\sqrt2)-\frac{1}{30\sqrt2}.
\]
\end{cor}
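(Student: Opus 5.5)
The corollary is the special case $I=(3\pi/2,7\pi/4)$ of Proposition~\ref{thm:local-variance}, followed by an explicit evaluation of the resulting integral. The whole seventh octant has fixed length $\pi/4$, so the hypothesis $|I|>(\log R)^8/\sqrt R$ holds for all large $R$. Here $\hat I=(0,\pi/4)$, and Proposition~\ref{thm:local-variance} gives
\[
\mathrm{Var}(R,(3\pi/2,7\pi/4))=\frac{\int_0^{\pi/4} g(\theta)(\sin\theta+\cos\theta)\,d\theta}{\int_0^{\pi/4}(\sin\theta+\cos\theta)\,d\theta}+o(1).
\]
The denominator is $(1-\tfrac{\sqrt2}{2})+\tfrac{\sqrt2}{2}=1$.

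For the numerator, write $m=\tan\theta$. Then $\sin\theta+\cos\theta=\cos\theta\,(1+m)$, and the factor $1+m$ cancels the denominator of $g$. We substitute $\cos\theta=(1+m^2)^{-1/2}$ and $d\theta=dm/(1+m^2)$. The numerator becomes
\[
\frac1{60}\int_0^1\frac{m^3-5m^2+15m+5}{(1+m^2)^{3/2}}\,dm .
\]
Polynomial division gives $m^3-5m^2+15m+5=(m-5)(m^2+1)+14m+10$. The integral therefore splits into four elementary pieces:
\[
\int_0^1\frac{m\,dm}{\sqrt{1+m^2}}=\sqrt2-1,\qquad
\int_0^1\frac{dm}{\sqrt{1+m^2}}=\log(1+\sqrt2),
\]
\[
\int_0^1\frac{m\,dm}{(1+m^2)^{3/2}}=1-\tfrac1{\sqrt2},\qquad
\int_0^1\frac{dm}{(1+m^2)^{3/2}}=\tfrac1{\sqrt2}.
\]

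Combining these pieces gives
\[
(\sqrt2-1)-5\log(1+\sqrt2)+14\Bigl(1-\tfrac1{\sqrt2}\Bigr)+\tfrac{10}{\sqrt2}=13-\sqrt2-5\log(1+\sqrt2).
\]
Dividing by $60$ yields $\tfrac{13}{60}-\tfrac1{12}\log(1+\sqrt2)-\tfrac{\sqrt2}{60}$. Since $\sqrt2/60=1/(30\sqrt2)$, this is exactly $c_{\mathrm{glob}}$.

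There is no real obstacle at this stage, since the substance lies in Proposition~\ref{thm:local-variance} and Lemma~\ref{lem:local-average-variance}. The points needing a little care are checking the cancellation of the factor $1+\tan\theta$ and carrying out the polynomial division correctly. One should also note that the local mean entering the variance is $\tfrac12+o(1)$ by Proposition~\ref{prop:meanuni}, which Proposition~\ref{thm:local-variance} already incorporates.
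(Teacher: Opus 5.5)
Your proposal is correct and follows the paper's own route: specialize Proposition~\ref{thm:local-variance} to the full octant, where the denominator is $1$, then substitute $m=\tan\theta$ to obtain $\frac{1}{60}\int_0^1 \frac{m^3-5m^2+15m+5}{(1+m^2)^{3/2}}\,dm$. Your explicit evaluation of this integral, which the paper leaves as ``a direct computation'', is correct and gives $c_{\mathrm{glob}}$.
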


\begin{proof}
Applying Proposition~\ref{thm:local-variance} and implementing the substitution \(u=\tan\theta\), gives
\[
c_{\mathrm{glob}}
=
\frac{1}{60}\int_0^1\frac{u^3-5u^2+15u+5}{(1+u^2)^{3/2}}\,du.
\]
A direct computation gives the desired result.
\end{proof}
\section{Limiting distribution}
\begin{proof}[Proof of Proposition \ref{prop:area-distribution} ] Following the same lines of the proof of Proposition \ref{thm:local-variance} the result is reduced to the following computation.  
Fix \(0<\theta<\pi/4\), and write
$
m=\tan\theta.
$
Let \(A=A(b,\theta)\) denote the area of the part of \(S=[0,1]^2\) lying above the line \(y-mx=b\). Then, by \eqref{eq:A},
it follows that the conditional density of \(A\) for fixed \(m\) is
\[
f_m(a)=
\begin{cases}
\dfrac{\sqrt m}{(1+m)\sqrt{2a}}, & 0<a<\dfrac m2,\\[3ex]
\dfrac{1}{1+m}, & \dfrac m2<a<1-\dfrac m2,\\[3ex]
\dfrac{\sqrt m}{(1+m)\sqrt{2(1-a)}}, & 1-\dfrac m2<a<1,
\end{cases}
\]
and in particular \(f_m(a)=f_m(1-a)\).

We now average over \(\theta\). Since
\[
\int_0^{\pi/4}(\sin\theta+\cos\theta)\,d\theta=1,
\]
the angle variable is distributed with density \((\sin\theta+\cos\theta)\,d\theta\). Passing to
$m=\tan\theta,$
we have
\[
d\theta=\frac{dm}{1+m^2},
\qquad
\sin\theta+\cos\theta=\frac{1+m}{\sqrt{1+m^2}},
\]
so the measure becomes
\[
\frac{1+m}{(1+m^2)^{3/2}}\,dm,
\qquad 0<m<1.
\]
Hence
\[
f(a)=\int_0^1 f_m(a)\,\frac{1+m}{(1+m^2)^{3/2}}\,dm.
\]
Substituting the formula for \(f_m(a)\) and simplifying gives, for \(0<a<\tfrac12\),
\[
f(a)
=
\int_0^{2a}\frac{dm}{(1+m^2)^{3/2}}
+
\int_{2a}^{1}\frac{\sqrt m}{\sqrt{2a}\,(1+m^2)^{3/2}}\,dm.
\]
Since each \(f_m\) is symmetric, so is \(f\), and therefore
$f(a)=f(1-a)$ for all $0<a<1$. This proves the formula for the density.
\end{proof}

\section{Further questions}

The results of this paper suggest several possible extensions. First,
one may replace the square fundamental domain $S=[0,1)^2$ by a
different fundamental domain for $\mathbb{Z}^2$, or more generally for
an arbitrary planar lattice. The case of polygonal fundamental domains
appears particularly interesting: the area functional is still
piecewise analytic, but the sequence of sides through which the curve
enters may have a more complicated symbolic structure. In particular,
the order-stability property does not always hold.  

A second direction is to replace the disk by other convex domains. For
strictly convex domains whose boundary has curvature uniformly bounded
way from zero, we would expect similar qualitative behavior.

A basic problem is to characterize domains (and fundamental domains)
which have asymptotically vanishing
area correlations.

\appendix

\section{A historical overview}

\label{appendixA}
\label{sec:gauss} We give a short historical overview
of Gauss original formulation for the circle problem.
In his study of binary quadratic forms \cite[I., art. 4]{Gauss}, he
introduced what is now known as ``the Gauss circle problem'', and
states that if the area of a figure scales as $k^2$ and the length of
the perimeter scales as $k$ then the number of the lattice points
inside  asymptotically equals  the area.  He gives 
the 
circle  as an example of a figure, with the asymptotic 
$|B(R) \cap \Z^2| \sim \pi R^2,$
but provides  no error term. Furthermore the proof is (roughly)
``suppressed, hastening to more difficult matters''
\begin{quote}
  Ceterum si operae pretium esset, facile demonstrationem illius
  theorematis antiquo rigore absolvere possemus, quam tamen hocce
  quidem loco supprimere maluimus ad difficiliora properantes.
\end{quote}

In \cite[II., art. 2]{Gauss} Gauss states that a figure delimited by a
closed curve can be divided into squares inside the figure and squares
intersecting the perimeter. He then states that the area of the figure
can be approximated up to any degree of precision by the area of the
internal squares, taking small enough squares (with some implicit
assumptions on the uniformity of the boundary of the figure). To prove
this he shows that the number of squares of side $a$ intersected by a
(closed) curve or length $l$ is at most $4\left(1+\frac{l}{a}\right)$.
This shows that the area of the intersected squares is negligible as $a
\to 0$. 
In \cite[II.,
art. 3]{Gauss} he then uses 
this result to prove that the number of lattice points (associated to
squares of side $a$), situated inside the figure, multiplied by the
area $a^2$ of the square, is asymptotic to the area of the figure if
$a$ tends to zero.

In fact, Gauss proves a result valid for general figures, though
without stating precise assumptions on the figure.
This, though not explicitly highlighted, gives 
$$|B(R) \cap \Z^2| = \pi R^2+O(R) $$
with an explicit
constant.

We remark that many modern sources attribute a slightly different
proof to Gauss, namely bounding the lattice points from above and
below in terms of the areas of two circles, one of radius
$R-1/\sqrt{2}$, and one of radius $R+1/\sqrt{2}$.
We have not been able to locate such a
proof in Gauss's writings.

\section{Heuristics for Hardy's conjecture}
\label{appendixB}
Let $N(R) := | B(R) \cap \Z^2|$ and define $E(R) := N(R) -
\pi R^{2}$. In \cite{hardy-average-order}, Hardy asserted that 
``\ldots and it is not unlikely that $E(R) = O(R^{1/2+\epsilon})$ for
all positive values of $\epsilon$, though this has never been
proved.'', and then showed that it is ``true on average'' in 
 that
$$
\frac{1}{x }\int_{0}^{x} |E(R)| dR = O(x^{1/2+\epsilon})
$$
as $x \to \infty$.

A folklore heuristic is that Hardy's conjecture would follow from
square root cancellation when summing over ``boundary terms'',
provided these terms are distributed as some sort of i.i.d. random
variables.

Given our setup we can give a probabilistic/dynamical heuristic that
exhibits an interesting ``perfect cancellation'' phenomena between the
curved and horizontal part of the boundary of a {\em quarter circle.}  The
number of lattice points in the first (half open) quarter circle,
i.e., $|\{ v \in \Z^2 : |v| \le R, \arg(v) \in [0, \pi/2) \}|$ then
equals $N(R)/4$. With $M(R)$ denoting the number of ``internal''
lattice points, i.e.,
$$ M(R) := |\{ (x,y) \in \Z^2 : x,y > 0, \arg((x,y)) \in [0, \pi/2)\}|
$$
we have $N(R)/4 = M(R) + R + O(1)$. Moreover, computing the area of
the quarter circle we have
$$
M(R) \cdot 1 + \sum_{\lambda \in \Lambda(R,(0,\pi/2))} A(\lambda)
=
\pi R^{2}/4
$$
which gives
$$
N(R)/4 = \pi R^{2}/4 - R + \sum_{\lambda \in \Lambda(R,(0,\pi/2))} A(\lambda).
$$
Since $|\Lambda(R,(0,\pi/2))| = 2R+O(1)$, we obtain 
$$
N(R) = \pi R^{2} + 4\sum_{\lambda \in \Lambda(R,(0,\pi/2))} \left(A(\lambda)-\frac{1}{2}\right)+O(1).
$$
Thus,
if the mean area is $1/2+o(1)$, i.e.,
$$
\frac{1}{\Lambda(R,(0,\pi/2))}
\sum_{\lambda \in \Lambda(R,(0,\pi/2))} A(\lambda)
=
1/2 + o(1)
$$
we find that 
$$
N(R) = \pi R^{2} + o(R)
$$
which gives a slight improvement on Gauss' error term $O(R)$.

We remark that $|\Lambda(R,(0,\pi/2)| \sim c\cdot R$ for $c \neq 2$
would imply that Gauss' error term $O(R)$ is {\em sharp}. In fact, any
improvement beyond Gauss crucially depends on a leading order
cancellation between two boundary terms: one from lattice points on
the horizontal axis, the other related to the areas of boundary
squares intersected with $B(R)$.

Finally, if $\{A(\lambda)-1/2\}_{\lambda \in \Lambda(R,(0,\pi/2)}$
``behaves as a collection of I.I.D random variables with mean zero'',
one would obtain
$N(R) = \pi R^{2} + O(R^{1/2+\epsilon})$ for any $\epsilon>0$.

\subsection{Cancellation via  trapezoidal integration}
A similar cancellation occurs when counting lattice
points by taking the integer part of $\sqrt{R^{2}-x^{2}}$; writing
$\rho(x) = 1/2 - \{ x \}$ where $\{x\} = x - [x]$ denotes the
fractional part of $x$, one finds a similar phenomena.
Namely, following Jameson \cite{jameson}, let
$f(x) = \sqrt{R^{2}-x^{2}} -x$ and put $K = [R/\sqrt{2}]$; then
$M(R)$, the number of integers in the half open quarter circle equals,
$$
2 \sum_{k=1}^{K}
[ f(k) ] + K + [R]=
2 \sum_{k=1}^{K} f(k) 
+
2 \sum_{k=1}^{K} \rho(f(k)) + [R]
$$
and using the trapezoidal rule we have
$$
\sum_{k=1}^{K} f(k) =
\frac{\pi R^{2}}{8} - R/2 + O(1)
$$
and obtain
$$
M(R) =
\pi R^{2}/4 + 
2 \sum_{k=1}^{K} \rho(f(k)). 
$$
Again there is essentially perfect cancellation between lattice points
on the $y$-coordinate axis (the term $[R]$ above) and the difference
between the $\sum_{k=1}^{K} f(k)$ and the integral
$\int_{0}^{K} f(t) \, dt$.

\end{document}